\def\blfootnote{\xdef\@thefnmark{}\@footnotetext}
\newcommand\ccnote{
    \blfootnote{\ccLogo\, \ccAttribution\,\, Licensed under a Creative Commons Attribution License (CC-BY).}
}
\numberwithin{equation}{section}
\renewcommand{\le}{\leqslant}
\renewcommand{\ge}{\geqslant}
\renewcommand{\mathbb}{\varmathbb}
\newtheorem{theorem}{Theorem}[section]
\newtheorem{lemma}[theorem]{Lemma}
\newtheorem{corollary}[theorem]{Corollary}
\newtheorem{proposition}[theorem]{Proposition}
\newtheorem{definition}[theorem]{Definition}
\newtheorem{remark}[theorem]{Remark}
\newtheorem{claim}[theorem]{Claim}
\newcommand{\Ff}{\mathcal F}
 \newcommand{\Dd}{\mathcal{D}}
  \newcommand{\MM}{\mathcal{M}}
\newcommand{\Pp}{\mathcal{P}}
 \newcommand{\RR}{\mathbf{R}}  
 \newcommand{\ZZ}{\mathbf{Z}}  
 \newcommand{\BB}{\mathbf{B}}  
  \newcommand{\Div}{\operatorname{Div}}
    \newcommand{\dist}{\operatorname{dist}}
 \newcommand{\area}{\operatorname{area}}
 \newcommand{\eps}{\epsilon}
 \newcommand{\Tan}{\operatorname{Tan}}
 \newcommand{\NN}{\mathbf{N}}
\newcommand{\spt}{\operatorname{spt}}
\newcommand{\Hh}{\mathcal{H}}
\newcommand{\pdf}[2]{\frac{\partial #1}{\partial #2}}
\def\begfig {
\begin{figure}
\small }
\def\endfig {
\normalsize
\end{figure}
}
\newcommand{\Xx}{\mathcal{X}}
\newcommand{\tY}{\tilde Y}
\address{Brian White, Stanford University, Department of Mathematics, 450 Jane Stanford Way, Bldg 380}
\email{bcwhite@stanford.edu}
\begin{document}

\thispagestyle{empty}

\begin{minipage}{0.28\textwidth}
\begin{figure}[H]
\includegraphics[width=2.5cm,height=2.5cm,left]{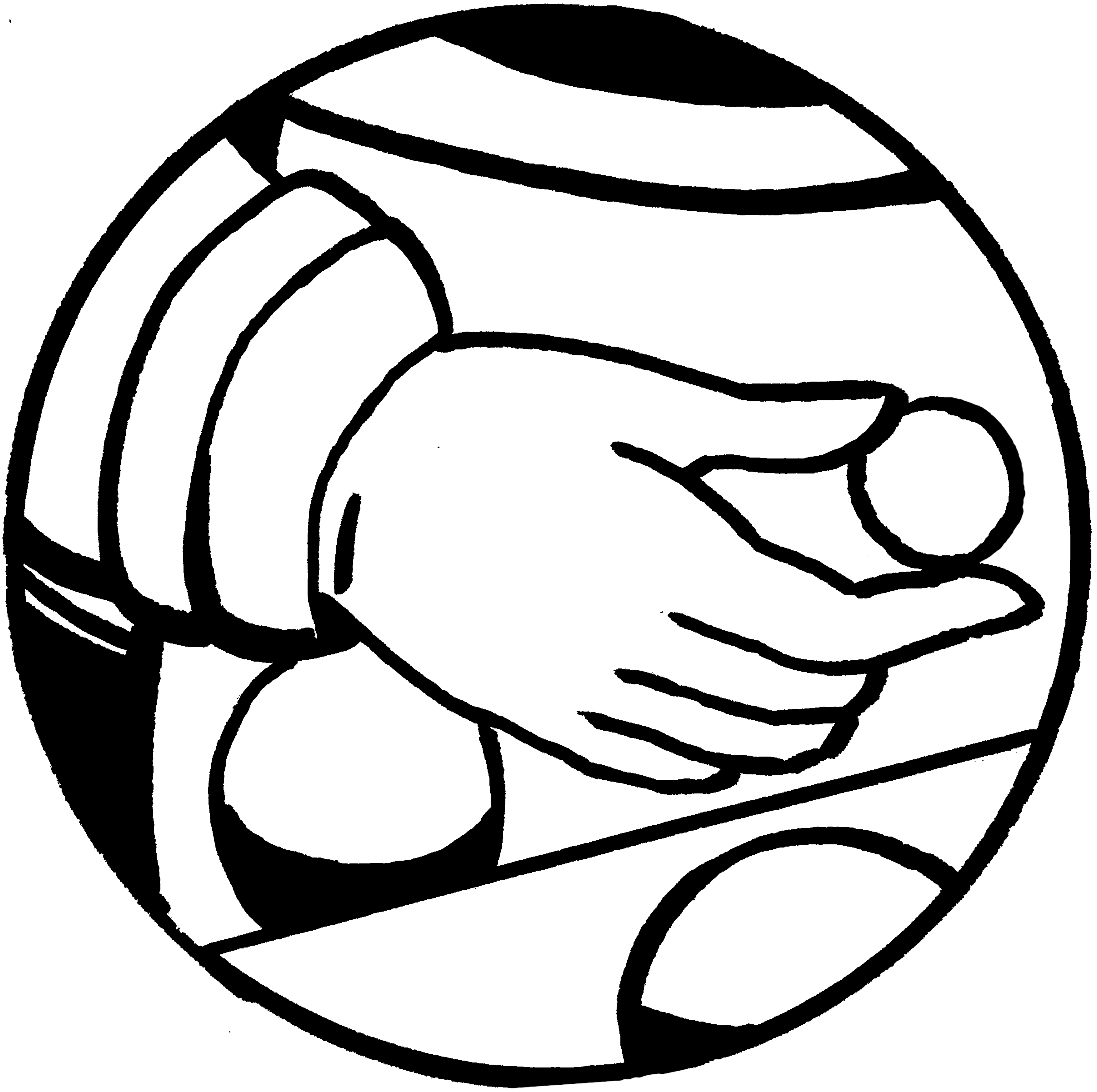}
\end{figure}
\end{minipage}
\begin{minipage}{0.7\textwidth} 
\begin{flushright}
Ars Inveniendi Analytica (2021), Paper No. 4, 43 pp.
\\
DOI 10.15781/vks5-5e33
\end{flushright}
\end{minipage}

\ccnote

\vspace{1cm}


\begin{center}
\begin{huge}
\textit{Mean Curvature Flow with Boundary}


\end{huge}
\end{center}

\vspace{1cm}


\begin{center}
\begin{minipage}[t]{.28\textwidth}
\begin{center}
{\large{\bf{Brian White}}} \\
\vskip0.15cm
\footnotesize{Stanford University}
\end{center}
\end{minipage}
\end{center}

\vspace{1cm}


\begin{center}
\noindent \em{Communicated by Carlo Sinestrari}
\end{center}
\vspace{1cm}


\noindent \textbf{Abstract.} \textit{We develop a theory of surfaces with boundary moving by mean curvature flow.
In particular, we prove a general existence theorem using elliptic regularization,
 and we prove boundary regularity
at all positive times under very mild hypotheses.}
\vskip0.3cm

\noindent \textbf{Keywords.} Mean curvature flow, boundary, regularity 
\vspace{0.5cm}


\tableofcontents

\section{Introduction}

In this paper, we study mean curvature flow for surfaces with boundary:
each point moves so that the normal component of its velocity is equal to the mean
curvature, and the boundary remains fixed.  (More generally, the boundary can
be time-dependent, but prescribed.)
In particular,
\begin{enumerate}
\item\label{general-item-intro} We define  {\bf integral Brakke flows with boundary} and prove the basic properties.
       This is a rather general class that includes network flows.   See~\S\ref{brakke-flow-section}.
\item We define the subclass of {\bf standard Brakke flows with boundary}.
   These are flows as in~\eqref{general-item-intro} with additional nice properties.  In particular, for almost all
   times, the moving surface has the prescribed boundary in the sense
   of mod $2$ homology.   This condition excludes, for example, surfaces
   with triple junctions (or, more generally, with odd-order junctions).  See~\S\ref{standard-section}. 
\item Following Ilmanen~\cite{ilmanen-elliptic}, we use elliptic regularization to prove existence
   of standard Brakke flows with boundary for any prescribed initial surface. See~\S\ref{existence-section}.
\item We prove a strong boundary regularity theorem for standard Brakke flows
   with boundary.  See~\S\ref{boundary-regularity-section}.
\end{enumerate}

As a special case of some of the results
 (Theorems~\ref{existence-theorem} and~\ref{main-boundary-regularity-theorem}), we have

\begin{theorem}\label{intro-theorem}
Let $N$ be a smooth, compact, $(m+1)$-dimensional Riemannian manifold with smooth, strictly mean-convex boundary.
Let $M_0$ be a smoothly embedded $m$-dimensional submanifold of $N$ whose boundary is 
a smooth submanifold $\Gamma$ of $\partial N$.
(More generally, $M_0$ can be any $m$-rectifiable set of finite $m$-dimensional measure whose boundary,
in the sense of mod $2$ flat chains, is a smoothly embedded submanifold $\Gamma$ of $\partial N$.)
Then there is a standard Brakke flow 
\[
    t\in [0,\infty) \mapsto M(t)
\]
with boundary $\Gamma$ such that $M(0)=M_0$.
Furthermore, if $M(\cdot)$ is any  standard Brakke flow with boundary $\Gamma$, then the flow is smooth
(with multiplicity one) in a spacetime neighborhood of each point $(p,t)$ with $p\in \Gamma$ and $t>0$.
\end{theorem}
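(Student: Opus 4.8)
\emph{Existence via elliptic regularization.} This is a special case of the general existence theorem, and I would prove it by Ilmanen's elliptic regularization, adapted to permit a fixed boundary. Work in $N\times\RR$ with coordinate $z$ on the second factor. For each $\eps>0$ let $\tilde M^{\eps}$ be the translating soliton in $N\times\RR$ --- equivalently, the minimal hypersurface for the conformally weighted metric $e^{z/\eps}(g_N+dz^2)$ --- that has boundary the translation-invariant submanifold $\Gamma\times\RR$ and is asymptotic to the cylinder $M_0\times\RR$. Its existence (as a subsequential limit of weighted-area minimizers in truncated cylinders $N\times[-R,R]$) is standard geometric measure theory, and here the \emph{strict} mean-convexity of $\partial N$ is precisely the barrier hypothesis that confines $\tilde M^{\eps}$ to $N\times\RR$, with boundary only along $\Gamma\times\RR\subset\partial N\times\RR$. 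Because $\tilde M^{\eps}$ translates, its $z$-slices define a Brakke flow $t\mapsto M^{\eps}(t)$ in $N$ with boundary $\Gamma$; the local area bounds inherited from minimality give compactness as $\eps\to 0$, and the limit is a Brakke flow with boundary $\Gamma$ and initial surface $M_0$. It is \emph{standard} because each $\tilde M^{\eps}$ is mass-minimizing mod $2$ with boundary $\Gamma\times\RR$, hence has mod $2$ boundary $\Gamma$ on almost every $z$-slice, a property that passes to the limit. This reproduces Theorem~\ref{existence-theorem}.

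\emph{Boundary regularity: monotonicity and tangent flows.} Now let $M(\cdot)$ be \emph{any} standard Brakke flow with boundary $\Gamma$, and fix $(p,t_0)$ with $p\in\Gamma$ and $t_0>0$. I would first establish a boundary analogue of Huisken's monotonicity formula centered at $(p,t_0)$: since $\Gamma$ is a fixed smooth submanifold it is flat to first order near $p$, so the Gaussian-weighted mass is monotone up to an error term that is controlled and scales away, yielding a well-defined Gaussian density $\Theta(p,t_0)\ge\tfrac12$. Parabolically dilating the flow about $(p,t_0)$ and extracting a subsequential limit produces a tangent flow $\mathcal{T}$; monotonicity forces $\mathcal{T}$ to be backward self-similar, while the dilated copies of $\Gamma$ converge to the static $(m-1)$-plane $L=T_p\Gamma$, so $\mathcal{T}$ is a self-shrinking standard Brakke flow whose boundary is the fixed flat $L$.

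\emph{Boundary regularity: the crux.} The heart of the matter is the classification: I must show $\mathcal{T}$ is a static multiplicity-one half-$m$-plane $H$ with $\partial H=L$, equivalently that $\Theta(p,t_0)=\tfrac12$. After splitting off the $m-1$ translation directions tangent to $L$, this reduces to a one-dimensional problem --- a self-shrinker in a line (or plane) with the origin as its mod $2$ boundary --- where the \emph{standard} hypothesis rules out the higher-density, odd-junction competitors and, together with the sharp lower density bound $\tfrac12$ at a boundary point, leaves only the half-plane. I expect this tangent-flow classification to be the main obstacle.

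\emph{Boundary regularity: conclusion.} Once the classification is in hand, $\Theta(p,t_0)=\tfrac12$ forces every sufficiently small-scale Gaussian ratio near $(p,t_0)$ to be close to $\tfrac12$, and I would finish with a boundary $\eps$-regularity theorem. The cleanest route to the latter is to reflect $M(\cdot)$ across $\partial N$ inside a collar in which $\partial N$ is, to leading order, totally geodesic, so that the doubled flow is a Brakke flow without boundary near $p$ with density close to $1$ at $(p,t_0)$; the interior local regularity theorem then applies and shows the doubled flow --- hence $M(\cdot)$ up to $\Gamma$ --- is a smooth multiplicity-one graph in a spacetime neighborhood of $(p,t_0)$. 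This is Theorem~\ref{main-boundary-regularity-theorem}. By comparison, the elliptic-regularization existence argument and the reflection reduction of the $\eps$-regularity step are routine given the standard geometric-measure-theory and Brakke-flow machinery; the genuine difficulty lies in the boundary tangent-flow classification.
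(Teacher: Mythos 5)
Your existence argument via elliptic regularization is essentially the paper's (Theorem~\ref{existence-theorem}): minimize weighted area mod $2$ in $N\times\RR$ with boundary $\Gamma\times\RR$, slice, and pass to the limit. That part is fine. The boundary-regularity argument, however, has a genuine gap in the step you yourself flag as the crux, and the gap is not merely technical.

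Your proposed classification --- that every self-similar \emph{standard} Brakke flow with boundary $L=T_p\Gamma$ is a static multiplicity-one halfplane, hence $\Theta(p,t_0)=\tfrac12$ --- is false as stated. A static triple junction (three halfplanes with common edge $L$ meeting at equal $120^\circ$ angles) is a self-shrinking integral Brakke flow with boundary $L$: it is unit-regular (the density at points of $L$ is $3/2$, not $1/2$, so unit-regularity imposes no constraint), it satisfies $|\nu(M,\cdot)|\equiv 0\le 1$ on $L$, and it satisfies $\partial[M]=[L]$ because the number of sheets is odd. So it is \emph{standard} in the paper's sense, and no amount of ``splitting off the translation directions and invoking the density lower bound $\tfrac12$'' will eliminate it. The ``standard'' hypothesis rules out even-order junctions, not odd ones. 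This is exactly the kind of tangent flow the paper is worried about (see the $\{A,B,C\}$ example in the introduction), and one cannot kill it by structure alone.

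What the paper actually proves is a conditional classification: the Shrinker Theorem~\ref{shrinker-theorem} and the Wedge Theorem~\ref{wedge-theorem} say that a self-similar standard flow with boundary $L$ that is \emph{confined to a wedge} with edge $L$ must be a multiplicity-one static halfplane (the trigonometric estimate on $|\nu'|$ then kills all odd $k\ge 3$, since the angles are bounded away from $\pm\pi/2$). The substantive content of the boundary-regularity proof (Theorem~\ref{main-boundary-regularity-theorem}) is the step that produces the wedge: the strong maximum principle (Theorem~\ref{maximum-principle}) and the strict mean-convexity of $\partial N$ force $\spt M(t)\cap\partial N=\Gamma\cap\partial N$ as soon as $t>0$; then Lemma~\ref{flatish-disk-lemma} constructs a small nonparametric graphical mean-curvature flow $D_r(\cdot)$ over a disk tangent to $\Gamma$ at $p$, which by the avoidance principle stays below $\spt M(t)$; blowing up at $(p,t_0)$, the Hopf lemma~\eqref{hopf} makes the barrier nondegenerate and so the tangent flow is trapped in a wedge. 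Your outline has no analogue of this barrier construction and does not use the strict mean-convexity of $\partial N$ anywhere in the regularity argument, so there is nothing in your sketch to exclude the triple-junction tangent flow.

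Two smaller remarks. First, the reflection trick you propose for the final $\eps$-regularity step is both unnecessary and problematic: unnecessary because standard flows are by definition unit-regular (Definition~\ref{standard-definition}), so once the tangent flow is a multiplicity-one halfplane, smoothness near $(p,t_0)$ is immediate; problematic because $\partial N$ is only mean-convex, not totally geodesic, so reflecting does not yield a Brakke flow and the ``to leading order totally geodesic'' error is not obviously harmless at all scales. Second, your heuristic ``split off the $m-1$ translation directions'' is not justified: tangent flows at boundary points need not be translation-invariant along $L$, and the paper never reduces to a one-dimensional problem.
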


Thus (under the hypotheses of the theorem) we have boundary regularity at {\bf all} positive times, even after
interior singularities may have occurred.

The regularity in Theorem~\ref{intro-theorem} is  uniform as $t\to\infty$. 
For given any sequence of times $t_i\to\infty$, there is a subsequence $t_{i(j)}$ such that the time-translated
flows 
\[
  t\in [-t_{i(j)}, \infty) \mapsto M_{i(j)}(t):= M(t_{i(j)} + t)
\]
 converge to a standard eternal limit flow $M'(\cdot)$
by~\S\ref{compactness-section} and Theorem~\ref{closure-theorem}.  Since the area of $M(t)$ is a decreasing function of $t$,
the area of $M'(t)$ is constant (it is equal to the limit as $t\to\infty$ of the area of $M(t)$).  It follows that the $M'(t)$ are stationary integral varifolds
and therefore non-moving (i.e., independent of $t$).
The limit flow is regular
at the boundary by Theorem~\ref{main-boundary-regularity-theorem},
 and thus the convergence $M_{i(j)}(\cdot)\to M'(\cdot)$ is smooth
near the boundary by the local regularity theory in~\cite{white-local}.

The notion of {\bf standard} Brakke flow with boundary is crucial in Theorem~\ref{intro-theorem}:
the regularity assertion of Theorem~\ref{intro-theorem} is false for general integral Brakke flows
with boundary, because interior singularities can move into  the boundary.
Consider, for example, three points $A$, $B$, and $C$ on the unit circle in $\RR^2$
and consider a configuration consisting of three curves in the interior of the triangle $ABC$ 
such that the three curves meet at equal angles at a point $P$ in the interior of the disk and such 
the other endpoints of the curves are the three points $A$, $B$, and $C$. The configuration evolves so that the three points on the unit circle are fixed,
and so that interior points move with normal velocity equal to the curvature.  This implies
that the triple junction $P(t)$ moves in such a way that the curves continue to meet at equal
angles at the junction.  If each interior angle of the triangle $ABC$ is less than $120^\circ$,
then the triple junction remains in the interior, and we have boundary regularity at all times.
However, if one of the angles is greater than $120^\circ$, then $P(t)$ will bump into the corresponding
vertex in finite time, thus creating a boundary singularity.

The flow described in the previous paragraph is an integral Brakke flow with boundary $\{A,B,C\}$.
However, it is not a standard Brakke flow with boundary $\{A,B,C\}$, because if we think
of the network as a mod $2$ chain, then the boundary contains $P(t)$ in addition to $A$, $B$, and $C$.

It is natural to wonder whether such a boundary singularity could occur if the original surface
is smooth and embedded.  In the case of curves, the answer is ``no": the flow would remain smooth everywhere for all time
by the analog of Grayson's Theorem.  However, although I do not yet have a proof, I believe
that there is an integral Brakke flow 
\[
   t\in [0,\infty)\mapsto M(t)
\]
with boundary $\Gamma$, where $\Gamma$ consists of smooth embedded curves in the unit
sphere in $\RR^3$, such that $M(0)$ is a smoothly embedded surface in the unit ball
and such that later the moving surface develops  a triple junction curve that eventually
bumps into the boundary.
Note that this could only happen if we had non-uniqueness, since by 
Theorem~\ref{intro-theorem} there is a standard Brakke flow $M'(\cdot)$ with the same
initial surface and the same boundary, and that flow never develops boundary singularities.
Of course the two flows are equal at least until singularities occur,  but they must differ as soon as $M(\cdot)$
has a triple junction curve.

In Theorem~\ref{intro-theorem}, the condition that $\Gamma$ lie in the boundary of $N$
is also crucial.  In another paper~\cite{white-singularity}, we show that there is a standard mean curvature flow with boundary that starts 
with a smoothly 
embedded M\"obius strip in $\RR^3$ and that develops a boundary singularity
at which the tangent flow is given by a smoothly embedded, non-orientable shrinker with straight line boundary. 
For oriented surfaces, the situation is very different: we prove~\cite{white-singularity}*{Theorem~1} that
if $M$ is an $m$-dimensional, smoothly embedded shrinker in $\RR^{m+1}$ with an $(m-1)$-dimensional
linear subspace as boundary, then $M$ is a flat halfspace.

The regularity part of Theorem~\ref{intro-theorem} is a consequence of the
following general theorem (see Theorem~\ref{general-boundary-regularity-theorem}):

\begin{theorem}\label{intro-theorem-2}
Suppose that $t\in [0,T]\mapsto M(t)$ is an $m$-dimensional standard mean curvature
flow with boundary $\Gamma$ in a smooth, $(m+1)$-dimensional Riemannian manifold.
If a tangent flow at $(p,t)$ is contained in a wedge, where $p\in \Gamma$ and $t>0$, 
then $(p,t)$ is a regular point of the flow $M(\cdot)$.
\end{theorem}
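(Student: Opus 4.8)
The plan is to run a tangent-flow argument. I would show that the wedge hypothesis forces every tangent flow of $M(\cdot)$ at $(p,t)$ to be a multiplicity-one static half-plane bounded by $T_p\Gamma$, and then conclude regularity from the boundary version of the local regularity theorem: flows that are nearly a multiplicity-one half-plane near a boundary point are smooth there. This ``nearly-flat'' case of boundary regularity is available before Theorem~\ref{general-boundary-regularity-theorem} itself (via graphical parabolic estimates, or via~\cite{white-local} after writing the flow as a graph over the half-plane with boundary on $\Gamma$).

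\textbf{Step 1: produce the tangent flow.} Using Huisken's monotonicity formula for standard Brakke flows with boundary together with the compactness theorem (\S\ref{compactness-section}, Theorem~\ref{closure-theorem}), a sequence of parabolic dilations of $M(\cdot)$ about $(p,t)$ subconverges to a tangent flow $\mathcal M'$. Because $M(\cdot)$ is standard, so is $\mathcal M'$; it is backwardly self-similar, it lives in $\mathbf R^{m+1}$ (the ambient manifold blows up to Euclidean space), its boundary is the $(m-1)$-plane $L=T_p\Gamma$, and by hypothesis its support lies in a wedge of opening angle $<2\pi$. Using the self-similarity of $\mathcal M'$ about the origin together with $L\subset\spt\mathcal M'$, I may replace that wedge by the largest dilation-invariant cone it contains, and so assume $\spt\mathcal M'\subset W$ for a wedge $W$ of opening angle $<2\pi$ whose edge contains $L$. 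Everything now reduces to the purely Euclidean claim: \emph{a standard self-shrinking flow in $\mathbf R^{m+1}$ with boundary $L$, contained in such a $W$, is the static multiplicity-one half-plane $P$ with $\partial P=L$.}

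\textbf{Step 2: dimension reduction along $L$.} For $q\in L\setminus\{0\}$, any tangent flow of $\mathcal M'$ at $q$ is invariant under translation along $L$, hence has the form $L\times\mathcal C$, where $\mathcal C$ is a standard self-shrinking Brakke flow of curves in the $2$-plane normal to $L$, with boundary the single point $0$, contained in a planar sector of angle $<2\pi$. Such a $\mathcal C$ is the multiplicity-one ray from $0$: it is $\sqrt{-s}$ times a curve-shrinker with an endpoint at $0$; the sector bound makes that shrinker graphical in polar coordinates; and the only solution of the shrinker ODE tangent to its asymptotic ray at the endpoint is the ray itself (multiplicity one because the flow is standard). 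Consequently $\Theta(\mathcal M',q)=\tfrac12$ for all $q\in L\setminus\{0\}$, and by the nearly-flat boundary regularity statement $\mathcal M'$ coincides with a smooth multiplicity-one half-plane near each such $q$.

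\textbf{Step 3: the vertex, rigidity of the whole shrinker, and conclusion.} Upper semicontinuity of density gives $\Theta(\mathcal M',0)\ge\tfrac12$. If this were strict, the tangent cone $C$ of the shrinker $\mathcal M'$ at $0$ would be a nonplanar minimal hypersurface cone with boundary $L$ lying in $W$; Step~2 applied to $C$ shows $C$ has density $\tfrac12$ along $L\setminus\{0\}$, so its link is a smooth minimal hypersurface-with-boundary in $\mathbf S^m$ spanning the great $(m-2)$-sphere $L\cap\mathbf S^m$ and lying in a spherical wedge of angle $<2\pi$. One shows such a link must be a great hemisphere, so $\Theta(C,0)=\tfrac12$, a contradiction; hence $\Theta(\mathcal M',0)=\tfrac12$ and $\mathcal M'$ is a smooth multiplicity-one half-plane $P$ near all of $L$. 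Since a closed self-shrinker cannot lie in a wedge of angle $<2\pi$ (maximum principle at an extreme point), every component of $\spt\mathcal M'$ meets $L$; unique continuation for the elliptic shrinker equation then forces each component into $P$, so $\mathcal M'=P$. Having a multiplicity-one half-plane as a tangent flow at $(p,t)$, the nearly-flat boundary regularity statement shows $M(\cdot)$ is smooth with multiplicity one near $(p,t)$; that is, $(p,t)$ is regular.

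\textbf{Main obstacle.} The crux is the rigidity in Step~3: ruling out every nonplanar minimal cone (equivalently, every link in $\mathbf S^m$) with flat codimension-two boundary that fits inside a wedge of opening angle up to $2\pi$. This is precisely where the hypothesis ``$<2\pi$'' is essential, and it calls for a maximum-principle or calibration argument; in high dimensions it may require invoking or adapting the classification of minimal cones, or an additional barrier construction. A secondary but delicate point is the bookkeeping that all blow-ups arising in Steps~2--3 are again \emph{standard} flows with the expected flat boundary, which is where the closure theory of \S\ref{standard-section} is needed.
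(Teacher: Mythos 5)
Your proposal takes a genuinely different route from the paper, but it has an unresolved gap that you yourself flag, and that gap is exactly the hard part. You try to classify the tangent flow by an Allard-type dimension reduction: blow up along $\Gamma$ to get translation-invariant curve-shrinkers, then blow up at the vertex to get a minimal cone, and argue the link of that cone must be a hemisphere. But you never prove the cone rigidity in Step~3 (``one shows such a link must be a great hemisphere''), and you concede in your final paragraph that you don't know how to do it. That is not a side issue --- it is the entire content of the theorem. In addition, Step~2 quietly assumes the planar Brakke shrinker with an endpoint is a smooth curve (so that ``the shrinker ODE'' applies), which is itself something to prove. The ``standard'' hypothesis also appears in your argument mostly as a bookkeeping device, whereas in the paper it carries real weight.

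The paper's route sidesteps all of this. It first proves the Shrinker Theorem (Theorem~\ref{shrinker-theorem}) by a sliding-half-plane barrier argument: the set of half-planes through $\Gamma$ disjoint from $\spt M$ is open (using a graphical odd-extension barrier and the maximum principle), and at the boundary of this set the strong maximum principle forces a half-plane to lie inside $M$; subtract it and iterate finitely many times to conclude $M$ is a finite sum of multiplicity-one half-planes. Notice this requires no classification of minimal cones, no dimension reduction, and in fact uses only that $\spt M$ misses a single half-plane, which is weaker than your wedge hypothesis. Then the Wedge Theorem (Theorem~\ref{wedge-theorem}) uses the two parts of ``standard'' in an essential way: the mod~$2$ boundary condition forces the number $k$ of half-planes to be odd, and the first-variation bound $|\nu|\le 1$ combined with the elementary trigonometric estimate~\eqref{trig} (valid precisely because the half-planes lie in a wedge of opening $<\pi$ on each side) forces $k=1$. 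Finally, unit-regularity converts ``the tangent flow is a multiplicity-one static half-plane'' into regularity at $(p,t)$, which is Theorem~\ref{general-boundary-regularity-theorem}. Your plan would give an alternative proof only if you could supply the cone rigidity and the curve-shrinker regularity, and you would also have to rebuild the multiplicity-one step, since your density argument does not use the mod~$2$ constraint that the paper's trigonometric lemma relies on.
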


Two features of this paper seem to be new even for Brakke flows without boundary.
First, when taking limits of Brakke flows, we get  improved subsequential convergence
of the mean curvature for almost all times; see Remark~\ref{new-h-remark}.
Second, 
to prove Huisken's monotonicity formula, one needs to know that the mean curvature vector is orthogonal
to the variety almost everywhere.  Brakke~\cite{brakke}*{\S5} proved such orthogonality
for arbitrary integral varifolds of bounded first variation.  However, the proof is rather
long (40 pages).  In this paper, we give a much easier proof that such orthogonality is preserved when
taking weak limits of mean curvature flows. Thus, in particular, orthogonality holds in 
flows coming from elliptic regularization. 
For this reason, we have chosen to include orthogonality of mean curvature as part of the definition
of Brakke flow.

For simplicity, in most of the paper we consider flows in which the boundary is fixed.
  In \S\ref{moving-boundary-section}, we indicate 
how to modify the theory for moving boundaries.

In~\S\ref{orientation-section}, we show that a certain weak notion of orientability 
is preserved when taking weak limits of flows.

Although mean curvature flow has been extensively studied, there have been
only a few investigations of mean curvature flow of surfaces with boundary.
The papers \cite{white-topology} and \cite{white-local} dealt with mean curvature flow of surfaces
both with and without boundary.
In \cite{stone}, Stone proved a theorem analogous to the boundary regularity part 
of Theorem~\ref{intro-theorem}, but only
at the first singular time and under additional, rather restrictive hypotheses.  In particular, the moving surface
was assumed to be mean convex and to satisfy a Type I estimate.
In~\cite{ilmanen-white-cones}, mean curvature flow with boundary was used to prove
sharp lower density bounds for area-minimizing hypercones.

\section{Notation}\label{notation-section}

In this paper, $U$ is a smooth Riemannian manifold (possibly with smooth boundary).  
We do not assume that $U$ is complete: it may be an open subset
of a larger Riemannian manifold.  We let $G_m(U)$ denote the Grassman bundle of 
pairs $(x,P)$ where $x\in U$ and $P$ is an $m$-dimensional linear subspace of $\Tan(U,x)$.
We let $\Xx(U)$ denote the space of continuous, compactly supported vectorfields on $U$.
We let $\Xx_m(U)$ denote the space of continuous, compactly supported functions on $G_m(U)$
that assign to each $(x,P)$ in $G_m(U)$ a vector in $\Tan(U, x)$.

If $M$ is a Radon Measure on $U$ and if $f$ is a function on $U$, we let
\[
   Mf = \int f\,dM.
\]

If $\Gamma$ is a $k$-dimensional submanifold of $U$ (or, more generally, a $k$-rectifiable
set of locally finite $k$-dimensional measure), then (by slight abuse of notation) we
will also use $\Gamma$ to denote the associated Radon measure.  
Thus
\[
  \int f\,d\Gamma = \int_\Gamma f\,d\Hh^k
\]
and
\[
  \Gamma(K) = \Hh^k(\Gamma\cap K).
\]

\section{$L^p$ vectorfields}

In the following theorem, $1_K$ denotes the characteristic function of the set $K$.
Thus if $M$ is a Radon measure on $U$ and if $p<\infty$, then
\[
   \|Y1_K\|_{L^p(M)} = \left( \int_K |Y|^p \,dM \right)^{1/p}.
\]
Similarly,
\[
   \|Y 1_K\|_{L^\infty(M)}
\]
is the essential supremum of $|Y|$ on the set $K$ with respect to the measure $M$.

\begin{theorem}\label{varifold-theorem}
Let $V_i$ and $V$ be rectifiable $m$-varifolds in $U$
such that $V_i\rightharpoonup V$.  Let $M_i$ and $M$ be the associated Radon measures on $U$.
Suppose that $p\in (1,\infty]$ and that $Y_i$ is a Borel vectorfield on $U$ such that
\[
   c_K := \sup_i \|Y_i 1_K\|_{L^p(M_i)} < \infty
\]
for every $K\subset\subset U$.
Then, after passing to a subsequence, there is a Borel vectorfield $Y$ on $U$ in $L^p_\textnormal{loc}(M)$
such that
\[
   \int X(x,\Tan(M_i(x))\cdot Y_i(x) \,dM_i(x) \to \int X(x,\Tan(M,x))\cdot Y(x)\,dM(x)
\]
for every  $X\in \Xx_m(U)$.
\end{theorem}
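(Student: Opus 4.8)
The plan is to repackage the data as $\Tan(U,\cdot)$-valued Radon measures on the Grassmann bundle $G_m(U)$ and extract a weak-$*$ limit there. For each $i$ let $\iota_i\colon U\to G_m(U)$ be the ($M_i$-a.e.\ defined) Borel map $\iota_i(x)=(x,\Tan(M_i,x))$, and set $\Sigma_i:=(\iota_i)_\#(Y_i\,M_i)$, so that $\Sigma_i(X)=\int X(x,\Tan(M_i,x))\cdot Y_i(x)\,dM_i(x)$ for $X\in\Xx_m(U)$ and $|\Sigma_i|\le(\iota_i)_\#(|Y_i|\,M_i)$. Since $\pi\colon G_m(U)\to U$ is proper, $V_i\rightharpoonup V$ forces $M_i=\pi_\#V_i\rightharpoonup\pi_\#V=M$, so $\sup_i M_i(K)<\infty$ for every $K\subset\subset U$. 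Hölder's inequality then gives $|\Sigma_i|(\pi^{-1}(K))\le\int_K|Y_i|\,dM_i\le c_K\,M_i(K)^{1-1/p}$ (read as $c_K\,M_i(K)$ when $p=\infty$), so the total variations $|\Sigma_i|$ are locally uniformly bounded. Covering $U$ by coordinate charts that trivialize the bundle and diagonalizing over the charts, after passing to a subsequence we obtain $\Sigma_i\rightharpoonup\Sigma$ for some $\Tan(U,\cdot)$-valued Radon measure $\Sigma$ on $G_m(U)$.

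The crux is to show $|\Sigma|\ll V$, where $V$ is regarded as a Radon measure on $G_m(U)$. Fix $K\subset\subset U$ and $\phi\in C_c(G_m(U))$ with $\phi\ge 0$ and $\spt\phi\subset\pi^{-1}(K)$. Hölder's inequality applied to $x\mapsto\phi(x,\Tan(M_i,x))$ gives
\[
|\Sigma_i|(\phi)=\int\phi(x,\Tan(M_i,x))\,|Y_i(x)|\,dM_i(x)\le V_i(\phi)^{1-1/p}\,\|\phi\|_\infty^{1/p}\,c_K,
\]
using $V_i(\phi)=\int\phi(x,\Tan(M_i,x))\,dM_i(x)$ and $\int_K|Y_i|^p\,dM_i\le c_K^{\,p}$. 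Since $V_i(\phi)\to V(\phi)$ and $|\Sigma|(\phi)\le\liminf_i|\Sigma_i|(\phi)$ (lower semicontinuity of total variation under weak-$*$ convergence), letting $i\to\infty$ yields $|\Sigma|(\phi)\le V(\phi)^{1-1/p}\,\|\phi\|_\infty^{1/p}\,c_K$. Hence, for any Borel $A$ with $\overline A$ compact and $V(A)=0$: choosing an open $O\supset A$ with $\overline O$ compact and $V(O)$ as small as we like, and setting $K=\pi(\overline O)$, inner regularity gives $|\Sigma|(A)\le|\Sigma|(O)=\sup\{|\Sigma|(\psi):\psi\in C_c(G_m(U)),\ 0\le\psi\le 1_O\}\le c_K\,V(O)^{1-1/p}$, so $|\Sigma|(A)=0$. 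Thus $\Sigma\ll V$, and by Radon--Nikodym there is a $V$-measurable vectorfield $Z$ on $G_m(U)$, with $Z(x,P)\in\Tan(U,x)$, such that $d\Sigma=Z\,dV$.

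Since $V$ is rectifiable it is carried by the graph $\{(x,\Tan(M,x)):x\in U\}$ and equals $\iota_\#M$ with $\iota(x)=(x,\Tan(M,x))$; define the $M$-measurable vectorfield $Y(x):=Z(x,\Tan(M,x))$. For $X\in\Xx_m(U)$, the change of variables formula gives $\int X(x,\Tan(M,x))\cdot Y(x)\,dM(x)=\int_{G_m(U)}X\cdot Z\,dV=\Sigma(X)$, while $\int X(x,\Tan(M_i,x))\cdot Y_i(x)\,dM_i(x)=\Sigma_i(X)\to\Sigma(X)$ because $X$ has compact support; this is exactly the asserted convergence. Finally, to see $Y\in L^p_\textnormal{loc}(M)$, let $K'\subset\subset U$, let $X\in\Xx_m(U)$ with $\spt X\subset\pi^{-1}(K')$, and let $q$ be conjugate to $p$. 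Then
\[
\Big|\int_{G_m(U)}X\cdot Z\,dV\Big|=|\Sigma(X)|=\lim_i\Big|\int X(x,\Tan(M_i,x))\cdot Y_i(x)\,dM_i(x)\Big|\le c_{K'}\liminf_i\Big(\int|X(x,\Tan(M_i,x))|^q\,dM_i(x)\Big)^{1/q}=c_{K'}\,\|X\|_{L^q(V)},
\]
since $\int|X(x,\Tan(M_i,x))|^q\,dM_i(x)=V_i(|X|^q)\to V(|X|^q)$ (and analogously with the $L^\infty$--$L^1$ pairing if $p=\infty$). As $Z\in L^1_\textnormal{loc}(V)$ and continuous compactly supported sections are $L^q(V)$-dense, this bound extends to $L^q$ sections, so duality gives $\|Z\,1_{\pi^{-1}(K)}\|_{L^p(V)}\le c_{K'}$ for $K\subset\subset\interior K'$; since $\|Y\,1_K\|_{L^p(M)}=\|Z\,1_{\pi^{-1}(K)}\|_{L^p(V)}$, we conclude $Y\in L^p_\textnormal{loc}(M)$.

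The main obstacle is the absolute-continuity step $|\Sigma|\ll V$: it is what rules out the limit measure $\Sigma$ concentrating on a $V$-null set (mass collapsing to lower dimension, or onto ``wrong'' tangent planes) and hence what lets $\Sigma$ be represented by a genuine $L^p$ vectorfield against the limit varifold. It is precisely here that the hypothesis $p>1$ is used, through the exponent $1-1/p<1$, which lets the crude bound $\int_K|Y_i|^p\,dM_i\le c_K^{\,p}$ be absorbed by the vanishing of $V(O)$. The rest---the bookkeeping of vector-valued Radon measures over a manifold (localize and diagonalize) and the concluding duality argument---is routine.
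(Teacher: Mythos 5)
Your proof is correct and follows essentially the same route as the paper: you package the functionals $L_i(X)=\int X(x,\Tan(M_i,x))\cdot Y_i(x)\,dM_i$ as vector-valued measures on $G_m(U)$, extract a weak-$*$ limit, pass a H\"older estimate against $V_i$ to the limit, and then represent the limit by a density in $L^p_{\mathrm{loc}}(V)$ which, via rectifiability of $V$, descends to the claimed $Y\in L^p_{\mathrm{loc}}(M)$. The only cosmetic difference is that you break the paper's terse appeal to ``Riesz representation'' into its constituent steps (absolute continuity via the sublinear bound, Radon--Nikodym, and then the $L^q$--$L^p$ duality bound), whereas the paper obtains the $L^p$ representative directly from the limiting estimate $|L(X)|\le c_K\|X\|_{L^q(V)}$.
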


\begin{proof}
Define 
\begin{align*}
&L_i: \Xx_m(U) \to \RR, \\
&L_i(X) = \int X(x,\Tan(M_i(x))\cdot Y_i(x)\,dM_i(x).
\end{align*}
If $K\subset\subset U$ and if $X\in \Xx_m(U)$ is supported in $\{(x,P):x\in K\}$, then
by H\"older's Inequality,
\begin{equation}\label{M-version}
  |L_i(X)| 
  \le
  c_K \left( \int |X(x,\Tan(M_i,x))|^q\,dM_i(x)\right)^{1/q},
\end{equation}
where $q=p/(p-1)$, or, equivalently, 
\begin{equation}\label{V-version}
|L_i(X)| \le c_K \left( \int |X(x,P)|^q \,dV_i(x,P) \right)^{1/q}.
\end{equation}
From~\eqref{M-version}, we see that
\begin{equation}\label{sup-bound}
  |L_i(X)| \le c_K d_K{}^{1/q} \sup|X|
\end{equation}
where $d_K:=\sup_i M_i(K)$. Note that $d_K<\infty$ by weak convergence of $M_i$ to $M$.

By~\eqref{sup-bound} and Banach-Alaoglu, we can assume,
after passing to a subsequence, that there is an $L: \Xx_m(U)\to \RR$ such that
\[
   L_i(X) \to L(X) \quad\text{for every $X\in \Xx_m(U)$}.
\]
Letting $i\to \infty$ in~\eqref{V-version} gives
\begin{equation}
 |L(X)| \le c_K \left( \int |X(x,P)|^q\,dV(x,P) \right)^{1/q}.
\end{equation}

By the Riesz Representation Theorem, there is an $M$-measurable vectorfield $\tY(x,P)$
such that
\begin{equation}\label{upstairs-version}
   L(X) = \int X(x,P)\cdot \tY(x,P)\,dV(x,P)
\end{equation}
for every $X\in \Xx_m(U)$.  Since $V$ is rectifiable and since $M$ is the associated Radon measure on $U$,
we can rewrite~\eqref{upstairs-version} as
\[
  L(X) = \int X(x,\Tan(M,x))\cdot \tY(x,\Tan(M,x))\,dM(x).
\]
Thus if we set $Y(x) = \tY(x,\Tan(M,x))$, then we have
\[
  L(X) = \int X(x,\Tan(M,x))\cdot Y(x) \,dM(x),
\]
as desired.
\end{proof}

\begin{corollary}\label{varifold-corollary}
In Theorem~\ref{varifold-theorem}, if $Y_i(x)$ is perpendicular to $\Tan(M_i,x)$ for $M_i$-almost every $x$,
then $Y(x)$ is perpendicular to $\Tan(M,x)$ for $M$-almost every $x$.
\end{corollary}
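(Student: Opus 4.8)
The plan is to test the characterizing relation for the limit against \emph{tangential} test vectorfields, since those pair to zero against each $Y_i$ by hypothesis. For an $m$-dimensional subspace $P\subset\Tan(U,x)$, let $\pi_P$ denote orthogonal projection onto $P$. Given an ordinary vectorfield $Z\in\Xx(U)$, define $X_Z(x,P)=\pi_PZ(x)$. Then $X_Z$ is continuous on $G_m(U)$, since $\pi_P$ depends continuously on $(x,P)$ and $Z$ is continuous, and its support is contained in $\{(x,P):x\in\spt Z\}$, which is compact because $\spt Z$ is compact and the Grassmannian fibers are compact; hence $X_Z\in\Xx_m(U)$.

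Let $L_i$ and $L$ be as in the proof of Theorem~\ref{varifold-theorem}. For $M_i$-almost every $x$ the vector $X_Z(x,\Tan(M_i,x))=\pi_{\Tan(M_i,x)}Z(x)$ lies in $\Tan(M_i,x)$, whereas $Y_i(x)$ is perpendicular to $\Tan(M_i,x)$; hence the integrand of $L_i(X_Z)$ vanishes $M_i$-a.e., so $L_i(X_Z)=0$ for every $i$. Since $X_Z\in\Xx_m(U)$, Theorem~\ref{varifold-theorem} (along the subsequence extracted there) gives
\[
   0=L(X_Z)=\int \pi_{\Tan(M,x)}Z(x)\cdot Y(x)\,dM(x).
\]
Orthogonal projection is self-adjoint, so $\pi_{\Tan(M,x)}Z(x)\cdot Y(x)=Z(x)\cdot\pi_{\Tan(M,x)}Y(x)$, and therefore
\[
   \int Z(x)\cdot\pi_{\Tan(M,x)}Y(x)\,dM(x)=0 \qquad\text{for every }Z\in\Xx(U).
\]

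Since $Y\in L^p_{\textnormal{loc}}(M)\subset L^1_{\textnormal{loc}}(M)$, the tangential part $x\mapsto\pi_{\Tan(M,x)}Y(x)$ also lies in $L^1_{\textnormal{loc}}(M)$, and $\Xx(U)$ is dense in $L^1_{\textnormal{loc}}(M)$; thus the displayed identity forces $\pi_{\Tan(M,x)}Y(x)=0$ for $M$-a.e.\ $x$, i.e.\ $Y(x)\perp\Tan(M,x)$ for $M$-a.e.\ $x$, as claimed. The only steps needing care are checking that $X_Z$ genuinely lies in $\Xx_m(U)$ (continuity and compactness of support of the projection-valued map) and the routine fact that vanishing of all pairings with $\Xx(U)$ implies $M$-a.e.\ vanishing of an $L^1_{\textnormal{loc}}$ vectorfield; that $\Tan(M,x)$ is defined only $M$-a.e.\ causes no trouble.
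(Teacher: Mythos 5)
Your proof is correct and follows essentially the same route as the paper: test the limit relation from Theorem~\ref{varifold-theorem} against tangentially projected vectorfields, use self-adjointness of orthogonal projection, and finish with density of $\Xx(U)$. The only cosmetic difference is that the paper starts from a general $X\in\Xx_m(U)$ and projects, while you start from $Z\in\Xx(U)$ and build $X_Z(x,P)=\pi_PZ(x)$ explicitly (and the paper phrases the final step via density in $L^q_{\textnormal{loc}}(M)$ rather than $L^1_{\textnormal{loc}}(M)$, which is the sharper pairing to invoke since $Y\in L^p_{\textnormal{loc}}(M)$); neither changes the substance.
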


\begin{proof}
Suppose $X\in \Xx_m(U)$.  Let
\[
   \tilde X(x,P) = \Pi_{P} X(x,P).
\]
Then $\tilde X$ is also in $\Xx_m(U)$.  Hence
\[
   \int \tilde X(x,\Tan(M_i,x)) \cdot Y_i(x) \,dM_i(x) \to \int \tilde X(x,\Tan(M,x))\cdot Y(x)\,dM(x),
\]
i.e,
\[
  \int \Pi_{\Tan(M_i,x)}X(x)\cdot Y(x) \,dM_i(x) \to \int \Pi_{\Tan(M,x)}X(x)\cdot Y(x)\,dM(x)
\]
The left hand side is $0$, so
\[
   \int \Pi_{\Tan(M,x)}X(x)\cdot Y(x)\,dM(x) = 0
\]
or, equivalently,
\[
  \int X(x)\cdot \Pi_{\Tan(M,x)}Y(x)\,dM(x)= 0
\]
for all $X\in \Xx(U)$.  Since $\Xx(U)$ is dense in $L^q_\textnormal{loc}(M)$
 (cf.~\cite{ilmanen-elliptic}*{\S7.4}), it follows that
\[
   \Pi_{\Tan(M,x)}Y(x)=0
\]
for $M$-almost every $x$.
\end{proof} 

\begin{theorem}\label{radon-measure-theorem}
Suppose $\beta_i$ $(i=1,2,\dots)$ and $\beta$ are Radon measures on $U$
such that $\beta_i$ converges to $\beta$.  Suppose that $p\in (1,\infty]$ and that 
$Y_i$ is a Borel vectorfield on $U$ 
such that
\[
    \sup_i \|1_KY_i\|_{L^p(\beta_i)} < \infty
\]
for every $K\subset\subset U$.
Then (after passing to a subsequence) there is a Borel vectorfield $Y$ on $U$ in $L^p_\textnormal{loc}(M)$
such that 
\[
    \int X\cdot Y_i\,d\beta_i \to \int X\cdot Y\,d\beta
\]
for all $X$ in $\Xx(U)$.
\end{theorem}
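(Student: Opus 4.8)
The plan is to reduce Theorem~\ref{radon-measure-theorem} to Theorem~\ref{varifold-theorem} by packaging the Radon measures $\beta_i$ as varifolds and the vectorfields $Y_i$ as varifold vectorfields. Concretely, for each $i$ let $V_i$ be the varifold on $U$ whose projection to $U$ is $\beta_i$ and which is supported over a single fixed plane in each fiber — for instance, work in local coordinates and let $V_i = \beta_i \times \delta_{P_0}$ for a fixed $P_0 \in G_m(U)$; equivalently, one can simply trivialize $G_m(U)$ and regard $\beta_i$ as a varifold that is constant in the Grassmannian variable. Passing to a subsequence, $V_i \rightharpoonup V$ where $V$ has projection $\beta$, and the hypothesis on $\|1_K Y_i\|_{L^p(\beta_i)}$ is exactly the hypothesis $c_K = \sup_i \|Y_i 1_K\|_{L^p(M_i)} < \infty$ of Theorem~\ref{varifold-theorem}. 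Since the plane is held fixed, the map $\Tan(M_i,x)$ appearing there is constant, so an element $X \in \Xx(U)$ lifts to $\tilde X \in \Xx_m(U)$ by $\tilde X(x,P) = X(x)$ and the conclusion of Theorem~\ref{varifold-theorem},
\[
  \int X(x)\cdot Y_i(x)\,dM_i(x) \to \int X(x)\cdot Y(x)\,dM(x),
\]
is precisely the desired convergence $\int X\cdot Y_i\,d\beta_i \to \int X\cdot Y\,d\beta$, with $Y \in L^p_{\mathrm{loc}}(\beta)$ the vectorfield produced by that theorem.

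Alternatively, and perhaps more cleanly, I would simply reprove the statement directly, since the argument of Theorem~\ref{varifold-theorem} is shorter in this setting: define $L_i : \Xx(U) \to \RR$ by $L_i(X) = \int X\cdot Y_i\,d\beta_i$; by H\"older's inequality with $q = p/(p-1)$ one gets, for $X$ supported in $K \subset\subset U$,
\[
  |L_i(X)| \le \|1_K Y_i\|_{L^p(\beta_i)} \left( \int_K |X|^q\,d\beta_i \right)^{1/q} \le c_K\, d_K^{1/q} \sup|X|,
\]
where $d_K := \sup_i \beta_i(K) < \infty$ by weak convergence. Banach--Alaoglu then gives a subsequence along which $L_i(X) \to L(X)$ for every $X \in \Xx(U)$, and letting $i \to \infty$ in the H\"older bound gives $|L(X)| \le c_K (\int |X|^q\,d\beta)^{1/q}$. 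By the Riesz representation theorem (applied to the functional $X \mapsto L(X)$ on the dense subspace $\Xx(U) \subset L^q_{\mathrm{loc}}(\beta)$, using that its norm is dominated by the $L^q(\beta)$ norm locally) there is a $\beta$-measurable $Y$ with $L(X) = \int X\cdot Y\,d\beta$, and the same H\"older bound on $L$ forces $\|1_K Y\|_{L^p(\beta)} \le c_K$, so $Y \in L^p_{\mathrm{loc}}(\beta)$.

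There is no serious obstacle here; the statement is genuinely a corollary of (or a simpler cousin of) Theorem~\ref{varifold-theorem}. The one point that needs a little care is the passage from the bound $|L(X)| \le c_K (\int |X|^q\,d\beta)^{1/q}$ to the existence of the representing vectorfield $Y$: one must check that $L$, a priori defined only on continuous compactly supported fields, extends to a bounded functional on (a localization of) $L^q(\beta)$, and this is exactly the density of $\Xx(U)$ in $L^q_{\mathrm{loc}}(\beta)$ invoked in the proof of Corollary~\ref{varifold-corollary} (cf.~\cite{ilmanen-elliptic}*{\S7.4}); for $p = \infty$, $q = 1$ one uses instead that the dual of $L^1_{\mathrm{loc}}(\beta)$ is $L^\infty_{\mathrm{loc}}(\beta)$. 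I would present the direct proof, remarking that it is a verbatim simplification of the proof of Theorem~\ref{varifold-theorem} with the Grassmannian variable suppressed, and that Corollary~\ref{varifold-corollary} has an evident analogue in this setting as well.
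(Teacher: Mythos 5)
Your direct proof (the second argument you describe) is correct and coincides with the paper's own, which simply states that the proof is essentially the same as that of Theorem~\ref{varifold-theorem} except carried out in $U$ rather than in $G_m(U)$. Your first ``reduction'' plan would need more care---a product $\beta_i\times\delta_{P_0}$ is not in general a rectifiable $m$-varifold, so Theorem~\ref{varifold-theorem} does not apply to it as stated---but since you ultimately opt for the direct argument, this is immaterial.
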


The proof is essentially the same as the proof of Theorem~\ref{varifold-theorem}, 
except that we work in $U$ rather than in $G_m(U)$.

\newcommand{\Vv}{\mathcal{V}}
\newcommand{\Var}{\operatorname{Var}}

\newcommand{\Tr}{\operatorname{Trace}}

\section{A Varifold Closure Theorem}

Let $U$ be an open subset of a smooth Riemannian manifold,
let $\MM(U)$ be the set of all Radon measures on $U$, and
let $\MM_k(U)$ be the set of Radon measures associated to $k$-dimensional rectifiable varifolds in $U$.
Equivalently, $\MM_k(U)$ is the set of Radon measures $M$ such that 
\begin{enumerate}[\upshape (i)]
\item $M(U\setminus S)=0$ for some countable union $S$ of $k$-dimensional $C^1$ submanifolds of $U$, and
\item $M$ is absolutely continuous with respect to $\Hh^k$.
\end{enumerate}
Let $I\MM_k(U)$ be the set of $M\in \MM_k(U)$ such that $\Theta(M,x)$ is an integer for $M$-almost every $x$.

If $M\in \MM_k(U)$, we let $\Var(M)$ be the associated $k$-dimensional varifold in $U$.
Thus $M\in I\MM_k(U)$ if and only if $\Var(M)$ is an integral varifold.

Now suppose that $M\in \MM_k(U)$ and that $\Var(M)$ has bounded first variation.  
Then there exist an $M$-locally integrable vectorfield $H(\cdot)=H(M,\cdot)$, a Radon
measure $\beta(M)$ that is singular with respect to $M$, and a $\beta(M)$-locally integrable unit vectorfield 
$\eta(\cdot)=\eta(M,\cdot)$ with the following property: if $X$ is any compactly supported, $C^1$ vectorfield on $U$,
then
\begin{equation}\label{divergence-formula}
   \int_M\Div_M X\,dM = - \int H\cdot X\,dM + \int X\cdot \eta\,d\beta.
\end{equation}

\begin{definition}\label{Vm-definition}
If $\Gamma$ is a properly embedded $(m-1)$-dimensional submanifold of $U$,
then $\Vv_m(U,\Gamma)$ is the space of $M\in I\MM_m(U)$ 
such that $M$ has bounded first variation and such
such that
\begin{enumerate}
\item $\beta(M) \le \Hh^{m-1}\llcorner \Gamma$.
\item\label{perpendicular-item} $H(\cdot)$ and $\Tan(M,\cdot)$ are perpendicular $M$-almost everywhere.
\end{enumerate}
\end{definition}

(As mentioned in the introduction,  the orthogonality condition~\eqref{perpendicular-item} in Definition~\ref{Vm-definition}
is superfluous according to a theorem of Brakke~\cite{brakke}*{\S5}, but the proof of that
theorem is rather difficult.  Including Condition~\eqref{perpendicular-item}  in the definition makes
that theorem unnecessary for us.)

Let
\begin{equation}\label{expression-for-nu}
  \nu(M,x) = \lim_{r\to 0} \frac1{\omega_{m-1}r^{m-1}}\int_{\BB(x,r)}\eta(\cdot)\,d\beta
\end{equation}
where the limit exists, and let $\nu(M,x)=0$ where the limit does not exist.
 Note that the limit exists $\Hh^{m-1}$ almost everywhere. Note also that
 we can rewrite~\eqref{divergence-formula} as
\begin{equation}\label{divergence-rewritten}
    \int \Div_MX\,dM = -\int H\cdot X\,dM + \int_\Gamma \nu\cdot X\,d\Hh^{m-1}
\end{equation}
or (using the notational conventions described in Section~\ref{notation-section}) as
\begin{equation*}\label{divergence-rewritten}
    \int \Div_MX\,dM = -\int H\cdot X\,dM + \int \nu\cdot X\,d\Gamma.
\end{equation*}
\begin{remark}\label{nu-equivalence-remark}
{\rm
The condition that  $\beta\le \Hh^m\llcorner \Gamma$ is equivalent to the condition  that $|\nu(x)|\le 1$
for $\Hh^{m-1}$ almost every $x\in \Gamma$.
}
\end{remark}

\begin{remark}\label{nu-perpendicular}
{\rm
If $M\in \Vv_m(U,\Gamma)$, then $\nu(M,\cdot)$ is perpendicular to $\Gamma$
at almost every point of $\Gamma$ by~\cite{allard-boundary}*{\S3.1}.
}
\end{remark}

In the following theorem, we write $H_i(\cdot)$ and $H(\cdot)$ for $H(M_i,\cdot)$ and $H(M,\cdot)$,
and $\nu_i(\cdot)$ and $\nu(\cdot)$ for $\nu(M_i,\cdot)$ and $\nu(M,\cdot)$.

\begin{theorem}[Varifold Closure Theorem]\label{varifold-closure-theorem}
Suppose for $i=1,2,\dots$ that $M_i\in \Vv_m(U,\Gamma_i)$, where the $\Gamma_i$
are smooth $(m-1)$-dimensional submanifolds of $U$ that converge in $C^1$
to a smooth manifold $\Gamma$.
Suppose that the $M_i$ converge to a Radon measure $M$ and that
\begin{equation*}\label{varifold-closure-d}
  d_K:= \sup_i \int_K |H_i|^2\,dM_i < \infty
\end{equation*}
for every $K\subset\subset U$.
Then
\begin{enumerate}[\upshape(1)]
\item\label{uniform-bounded-variation}
For every $K\subset\subset U$, 
\[
\sup_i \left(\int_K|H_i|\,dM_i + \beta(M_i)(K)\right) < \infty.
\]
\item\label{M-good-theorem} $M\in I\MM_m(U)$ and $\Var(M_i)$ converges to $\Var(M)$.
Thus if $f:G_m(U)\to \RR$ is continuous and compactly supported, then
\begin{equation*}
    \int f(x, \Tan(M_i,x))\,dM_i(x) \to \int f(x , \Tan(M,x))\,dM(x)
\end{equation*}
\item\label{H-good-theorem} If $X\in \Xx_m(U)$,
 then
   \begin{equation*}
     \int X(x,\Tan(M_i,x))\cdot H_i(x)\,dM_i \to \int X(x,\Tan(M,x))\cdot H(x)\,dM(x).
   \end{equation*}
\item\label{nu-good-theorem} If $Z\in \Xx_{m-1}(U)$, then
  \begin{equation*}
      \int Z(x,\Tan(\Gamma_i,x))\cdot \nu_i(x)\,d\Gamma_i(x) 
      \to 
      \int Z(x,\Tan(\Gamma_i,x))\cdot\nu(x) \,d\Gamma(x).
  \end{equation*}
\item\label{belongs} $M\in \Vv_m(U,\Gamma)$.
 \end{enumerate}
\end{theorem}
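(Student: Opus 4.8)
\emph{Strategy.} The plan is to assemble three compactness inputs --- Allard's integral varifold compactness theorem for $\Var(M_i)$, Theorem~\ref{varifold-theorem} (with $p=2$) applied to the mean curvature vectors $H_i$, and Theorem~\ref{radon-measure-theorem} (with $p=\infty$) applied to the boundary unit fields $\eta(M_i,\cdot)$ --- and then to identify the resulting weak limits with the quantities $\Var(M)$, $H(M,\cdot)$, $\beta(M)$, $\nu(M,\cdot)$ attached canonically to $M$ and $\Gamma$. Since those target quantities do not depend on the choice of subsequence, the subsequences needed to extract limits cost nothing: once an asserted convergence holds along a subsequence, it holds along the full sequence because every subsequence then has a further subsequence along which it holds with the same limit. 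I would start with assertion~(1): fix $K\subset\subset U$; then by Cauchy--Schwarz and~\eqref{varifold-closure-d}, $\int_K|H_i|\,dM_i\le d_K^{1/2}M_i(K)^{1/2}$, which is bounded uniformly because $M_i\rightharpoonup M$ forces $\sup_iM_i(K)<\infty$; and $\beta(M_i)(K)\le \Hh^{m-1}(\Gamma_i\cap K)$ is bounded uniformly because the $\Gamma_i$ converge in $C^1$ to the smooth manifold $\Gamma$ (so the areas $\Hh^{m-1}(\Gamma_i\cap K)$ converge, hence are bounded). This proves~(1) and in particular gives local uniform bounds for the first variations $\|\delta\Var(M_i)\|$.

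\emph{Assertion~(2).} By those bounds and Allard's integral compactness theorem, a subsequence of $\Var(M_i)$ converges as varifolds to an integral varifold $W$ of locally bounded first variation whose weight measure is $\lim_iM_i=M$. Hence $M\in I\MM_m(U)$, and since an integral varifold is determined by its weight measure, $W=\Var(M)$. As this limit is independent of the subsequence, $\Var(M_i)$ converges to $\Var(M)$ along the whole sequence, which is assertion~(2).

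\emph{Assertions~(3), (4), (5).} Theorem~\ref{varifold-theorem} with $p=2$ and $Y_i=H_i$ (valid by~\eqref{varifold-closure-d}) produces, along a subsequence, a field $Y\in L^2_\textnormal{loc}(M)$ with $\int X(x,\Tan(M_i,x))\cdot H_i\,dM_i\to\int X(x,\Tan(M,x))\cdot Y\,dM$ for all $X\in\Xx_m(U)$. By assertion~(1) a further subsequence has $\beta(M_i)\rightharpoonup\gamma$ for some Radon measure $\gamma$; since $\beta(M_i)\le\Hh^{m-1}\llcorner\Gamma_i$ and $\Hh^{m-1}\llcorner\Gamma_i\rightharpoonup\Hh^{m-1}\llcorner\Gamma$ (the $C^1$ convergence), we get $\gamma\le\Hh^{m-1}\llcorner\Gamma$, so $\gamma$ is carried by $\Gamma$ and therefore $\gamma\perp M$, because $M\ll\Hh^m$ while $\Hh^m(\Gamma)=0$. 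Theorem~\ref{radon-measure-theorem} with $p=\infty$, $\beta_i=\beta(M_i)$, $Y_i=\eta(M_i,\cdot)$ then gives, along a further subsequence, a field $\eta$ with $|\eta|\le 1$ $\gamma$-a.e.\ and $\int X\cdot\eta(M_i,\cdot)\,d\beta(M_i)\to\int X\cdot\eta\,d\gamma$ for $X\in\Xx(U)$. Now fix a compactly supported $C^1$ vectorfield $X$. The map $(x,P)\mapsto\Div_PX(x)$ is continuous and compactly supported on $G_m(U)$, so assertion~(2) gives $\int\Div_{M_i}X\,dM_i\to\int\Div_MX\,dM$; passing the identity
\[
  \int\Div_{M_i}X\,dM_i=-\int H_i\cdot X\,dM_i+\int X\cdot\eta(M_i,\cdot)\,d\beta(M_i)
\]
to the limit (using the constant-in-$P$ extension of $X$ in the first term on the right) yields
\[
  \int\Div_MX\,dM=-\int Y\cdot X\,dM+\int X\cdot\eta\,d\gamma .
\]
In particular $\Var(M)$ has locally bounded first variation, so it admits the decomposition $\int\Div_MX\,dM=-\int H(M,\cdot)\cdot X\,dM+\int X\cdot\eta(M,\cdot)\,d\beta(M)$ of~\eqref{divergence-formula}, with $\beta(M)\perp M$ and $|\eta(M,\cdot)|=1$ $\beta(M)$-a.e. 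Comparing this with the previous display and using $\gamma\perp M$ and $\beta(M)\perp M$, uniqueness of the Radon--Nikodym and polar decompositions of the vector measure $\delta\Var(M)$ with respect to $M$ forces $H(M,\cdot)=Y$ $M$-a.e.\ and $\eta(M,\cdot)\,\beta(M)=\eta\,\gamma$ as vector measures; taking total variations gives $\beta(M)=|\eta|\,\gamma\le\gamma\le\Hh^{m-1}\llcorner\Gamma$. Since each $H_i$ is perpendicular to $\Tan(M_i,\cdot)$ $M_i$-a.e., Corollary~\ref{varifold-corollary} shows $Y=H(M,\cdot)$ is perpendicular to $\Tan(M,\cdot)$ $M$-a.e.; with assertion~(2) this is exactly $M\in\Vv_m(U,\Gamma)$, assertion~(5), while the $H_i$-convergence above with $Y=H(M,\cdot)$ is assertion~(3). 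For assertion~(4), $\int\nu_i\cdot X\,d\Gamma_i=\int X\cdot\eta(M_i,\cdot)\,d\beta(M_i)\to\int X\cdot\eta\,d\gamma=\int\nu\cdot X\,d\Gamma$ (the last equality from $\eta(M,\cdot)\beta(M)=\eta\gamma$ and the definition~\eqref{expression-for-nu}) for every $X\in\Xx(U)$; since $|\nu_i|\le1$ and $\Tan(\Gamma_i,\cdot)\to\Tan(\Gamma,\cdot)$ uniformly on compacta, approximating $Z(x,\Tan(\Gamma_i,x))$ in sup norm by vectorfields depending on $x$ alone upgrades this to $Z\in\Xx_{m-1}(U)$. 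All target limits being intrinsic to $M$ and $\Gamma$, each convergence then holds along the full sequence.

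\emph{Main obstacle.} The heart of the argument is the identification $Y=H(M,\cdot)$: one must first promote the $L^2$ bound on the $H_i$ to local first‑variation bounds, then pass the first variation identity to the limit, then invoke uniqueness of the Radon--Nikodym/polar decomposition of $\delta\Var(M)$ --- and the observation that makes this clean is that the weak limit $\gamma$ of the boundary measures is singular with respect to $M$, which simultaneously delivers $\beta(M)\le\Hh^{m-1}\llcorner\Gamma$. The orthogonality condition in Definition~\ref{Vm-definition} for the limit is then free from Corollary~\ref{varifold-corollary}, and the remaining work --- the $C^1$-convergence bookkeeping for $\Gamma_i\to\Gamma$ entering assertions~(1) and~(4) --- is routine.
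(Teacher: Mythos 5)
Your proof is correct and, at the global level, follows the paper's strategy: Cauchy--Schwarz plus the $C^1$ convergence of the $\Gamma_i$ for assertion~(1); Allard's integral compactness theorem together with the uniqueness of a rectifiable integral varifold given its weight measure for assertion~(2); Theorem~\ref{varifold-theorem} with $p=2$ for the mean curvature vectors; and passing the first variation identity to the limit and invoking uniqueness of the $M$-Lebesgue decomposition of $\delta\Var(M)$ to identify the limiting vector field and boundary measure, and thus to get~(3), (4), (5). The ``subsequence costs nothing because the limits are intrinsic'' device is also exactly what the paper does.

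The one place where you genuinely diverge from the paper is the treatment of the boundary term. The paper applies Theorem~\ref{varifold-theorem} (with $p=\infty$) a second time, treating the $\Gamma_i$ as $(m-1)$-dimensional varifolds converging to $\Gamma$ and $\nu_i$ as the bounded Borel vectorfields; this directly produces a $\Gamma$-measurable $\tilde\nu$ together with the convergence in assertion~(4) against any $Z\in\Xx_{m-1}(U)$, and the bound $|\tilde\nu|\le1$ (equivalently $\beta(M)\le\Hh^{m-1}\llcorner\Gamma$, via Remark~\ref{nu-equivalence-remark}) comes for free. You instead apply Theorem~\ref{radon-measure-theorem} to the pair $(\beta(M_i),\eta(M_i,\cdot))$, extract the limiting singular measure $\gamma$, observe $\gamma\le\Hh^{m-1}\llcorner\Gamma$ and $\gamma\perp M$, and then recover $\nu(M,\cdot)$ and assertion~(4) from the identity $\nu(M,\cdot)\,\Hh^{m-1}\llcorner\Gamma=\eta(M,\cdot)\,\beta(M)$, upgrading the test class from $\Xx(U)$ to $\Xx_{m-1}(U)$ by a sup-norm approximation using the $C^1$ convergence of the tangent planes. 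Both routes work. The paper's is a bit slicker for~(4) because the dependence on $\Tan(\Gamma_i,\cdot)$ is handled automatically by the Grassmannian version of the $L^p$ lemma, whereas your approximation step, though fine, needs a tubular-neighborhood extension of $\Tan(\Gamma,\cdot)$ to make sense of $Z(x,\Tan(\Gamma,x))$ for $x\in\Gamma_i\setminus\Gamma$ --- a point you should spell out if you keep your version. On the other hand, your version makes the identification of $\beta(M)$ and of $\nu(M,\cdot)$ more explicit, by naming $\gamma$ and invoking the uniqueness of the Lebesgue/polar decomposition of $\delta\Var(M)$, which the paper leaves implicit in the sentence ``Consequently, $M\in\Vv_m(U,\Gamma)$, $\tilde H(\cdot)=H(M,\cdot)$ and $\tilde\nu(\cdot)=\nu(M,\cdot)$.'' No gaps; just a different bookkeeping of the boundary data.
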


\begin{proof}
Since the $M_i$ converge to $M$, 
\begin{equation}\label{varifold-closure-c}
  c_K:=\sup_i M_i(K) < \infty
\end{equation}
for every $K\subset\subset U$.
Thus
\begin{align*}
\int_K|H_i|\,dM_i 
&\le
(M_i(K))^{1/2} \left(\int_K|H_i|^2\,dM_i \right)^{1/2} 
\\
&\le
(c_Kd_K)^{1/2}
\end{align*}
Also,
\[
 \sup_i \beta(M_i)(K) \le \sup_i\Hh^{m-1}(\Gamma_i\cap K) < \infty
\]
since $\Gamma_i$ converges in $C^1$ to $\Gamma$.  
This proves Assertion~\eqref{uniform-bounded-variation}.

By Assertion~\eqref{uniform-bounded-variation} 
and by Allard's Closure Theorem for Integral
 Varifolds~(\cite{allard-first-variation}*{Theorem 6.4} 
 or~\cite{simon-gmt}*{\S42.8}
 or~\cite{simon-new-gmt}*{chapter 8, \S5.9}),
the varifolds $\Var(M_i)$ converge (after passing to a subsequence) to an integral varifold $V$
of bounded first variation.
Note that $\mu_V=M$, so the limit $V=\Var(M)$ does not depend
on the choice of subsequence.  Thus the original sequence $\Var(M_i)$ converges to $\Var(M)$.
 Thus we have proved Assertion~\eqref{M-good-theorem} of the theorem.

By Theorem~\ref{varifold-theorem}, every sequence of $i$ tending to infinity has
a  subsequence $i(j)$ for which there exist
an $M$-measurable
vectorfield $\tilde H$ and a $\Gamma$-measurable vectorfield $\tilde\nu$ such that
\begin{equation}\label{H-good}
\begin{aligned}
  \int X(x,\Tan(M_{i(j)},x))\cdot H_{i(j)}&(x) \,dM_{i(j)}(x) 
  \\
  & \to \int X(x,\Tan(M,x))\cdot \tilde H(x)\,dM(x)
\end{aligned}
\end{equation}
for every $X\in \Xx_m(U)$ and 
\begin{equation}\label{nu-good}
\begin{aligned}
  \int Z(x,\Tan(\Gamma_{i(j)},x))\cdot \nu_{i(j)}(&x)\,d\Gamma_{i(j)}(x)
  \\
  &\to
  \int Z(x,\Tan(\Gamma,x))\cdot \tilde\nu(x)\,d\Gamma(x)
\end{aligned}
\end{equation}
for every $Z\in \Xx_{m-1}(U)$.
Furthermore, by Corollary~\ref{varifold-corollary}, 
the perpendicularity almost everywhere of $H(M_i,\cdot)$ and $\Tan(M_i,\cdot)$ implies
the perpendicularity almost everywhere of $\tilde H(\cdot)$ and $\Tan(M,\cdot)$. 
  Also, from~\eqref{nu-good} (and Remark~\ref{nu-equivalence-remark}) we see that $|\tilde\nu(\cdot)|\le 1$
almost everywhere with respect to $\Gamma$.

For every $C^1$, compactly supported vectorfield $X$ on $U$, we have
\[
   \int \Div_{M_{i(j)}}X \,dM_{i(j)} 
   = 
   -\int H_{i(j)}\cdot X \,dM_{i(j)}(\cdot) 
   + \int \nu_{i(j)} \cdot X \,d\Gamma_{i(j)}(\cdot).
\]
By the convergence $\Var(M_{i(j)})$ to $\Var(M)$ and by~\eqref{H-good} and~\eqref{nu-good}, 
it follows that
\[
   \int \Div_{M}X \,dM
   = 
   -\int \tilde H \cdot X \,dM_i
   + \int \tilde\nu \cdot X\,d\Gamma.
\] 
Consequently, $M\in \Vv_m(U,\Gamma)$, $\tilde H(\cdot)=H(M,\cdot)$ and $\tilde\nu(\cdot)=\nu(M,\cdot)$.
We passed to a subsequence $i(j)$, but since the limits $H(M,\cdot)$ and $\nu(M,\cdot)$ are independent of the
choice of subsequence, in fact~\eqref{H-good} and~\eqref{nu-good} hold for the original sequence.
\end{proof}

\section{Brakke Flows with Boundary}\label{brakke-flow-section}

\begin{definition}\label{brakke-flow-definition}
An $m$-dimensional {\bf integral Brakke flow with boundary} in $U$ is a pair $(M(\cdot),\Gamma)$
where $\Gamma$ is a smooth, properly embedded $(m-1)$-dimensional submanifold of $U$
and where
\[
   t\in I\mapsto M(t)
\]
is a Borel map from an interval $I$ to the space $\MM(U)$ of Radon measures in $U$ such that
\begin{enumerate}[\upshape (1)]
\item For almost every $t\in I$, $M(t)$ is in $\Vv_m(U,\Gamma)$ (see Definition~\ref{Vm-definition}).
\item\label{finiteness-item}
If $[a,b]\subset I$ and if $K\subset U$ is compact, then
\[
   \int_a^b\int_K (1+|H|^2)\,dM(t)\,dt < \infty.
\]   
\item\label{defining-inequality}
If $[a,b]\subset I$ and if $u$ is a nonnegative, compactly supported, $C^2$ function on $U\times[a,b]$,
 then
\begin{equation*}
 (Mu)(a) - (Mu)(b) 
  \ge
\int_a^b \int \left( u|H|^2 - H\cdot \nabla u - \pdf{u}{t} \right)\,dM(t)\,dt.
\end{equation*}
\end{enumerate}
We also say that ``$M(\cdot)$ is an integral Brakke flow with boundary $\Gamma$''.
\end{definition}

By~\eqref{finiteness-item}, the integral in~\eqref{defining-inequality} is finite.

(The condition that $t\mapsto M(\cdot)$ is a Borel map is equivalent
to the condition that $t\mapsto M(t)f$ is a Borel map for every continuous, compactly supported function $f$ on $U$.)

\begin{proposition}\label{more-general-u-proposition}
If $t\in I\mapsto M(t)$ is a Brakke flow with boundary $\Gamma$, then 
the defining inequality~\eqref{defining-inequality} in Definition~\ref{brakke-flow-definition}
 holds for every nonnegative, compactly supported,
Lipschitz function $u$ on $U$ that is $C^1$ on $\{u>0\}$.
\end{proposition}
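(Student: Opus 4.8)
The plan is a two‑stage approximation of $u$ by admissible (i.e.\ $C^2$, in fact $C^\infty$) test functions: apply the defining inequality~\eqref{defining-inequality} to the approximants and pass to the limit. First I would compose $u$ with a cutoff in order to make it $C^1$ everywhere; then I would mollify. Throughout, $\nabla u$ denotes the continuous gradient of $u$ on the open set $\{u>0\}$, extended by $0$ to $\{u=0\}$; this is a Borel vectorfield with $|\nabla u|\le\operatorname{Lip}(u)$ that agrees Lebesgue‑almost everywhere with the classical derivative of the Lipschitz function $u$ (and if $u$ is also $t$‑dependent, $\partial_t u$ is interpreted similarly).

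\emph{Stage 1 (make $u$ globally $C^1$).} Fix a smooth nondecreasing $\phi_\delta\colon\RR\to\RR$ with $\phi_\delta\equiv 0$ on $(-\infty,\delta]$, $\phi_\delta(s)=s$ for $s\ge 2\delta$, and $0\le\phi_\delta'\le C_0$ with $C_0$ independent of $\delta$ (take $\phi_\delta(s)=\delta\,\psi((s-\delta)/\delta)$ for a fixed profile $\psi$); then $|\phi_\delta(s)-s|\le 2\delta$. Put $u_\delta:=\phi_\delta\circ u$. This is nonnegative and compactly supported, $\{u_\delta>0\}\subset\{u\ge\delta\}$ is a compact subset of the open set $\{u>0\}$ on which $u$ is $C^1$, and $u_\delta$ vanishes on the open neighborhood $\{u<\delta\}$ of $\{u=0\}$. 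Since $\{u>0\}$ and $\{u<\delta\}$ cover $U$ and $u_\delta$ is $C^1$ on each (a composition of $C^1$ maps on the first, identically $0$ on the second), $u_\delta$ is $C^1$ on all of $U$, with $\nabla u_\delta=\phi_\delta'(u)\nabla u$. Granting~\eqref{defining-inequality} for each $u_\delta$, I let $\delta\to0$: one has $u_\delta\to u$ uniformly, so $(Mu_\delta)(t)\to(Mu)(t)$ for every $t$ (each $M(t)$ is Radon, all supports lie in one compact $K$); and $\nabla u_\delta\to\nabla u$, $u_\delta|H|^2\to u|H|^2$, $\partial_t u_\delta\to\partial_t u$ pointwise, with integrands dominated on $[a,b]\times U$ by $C_0\operatorname{Lip}(u)\,|H|\,1_K$, $(\sup u +1)|H|^2\,1_K$, and $C_0\operatorname{Lip}(u)\,1_K$. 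Since $\int_a^b\!\int_K(1+|H|+|H|^2)\,dM(t)\,dt<\infty$ by condition~\eqref{finiteness-item} of Definition~\ref{brakke-flow-definition}, dominated convergence transfers the inequality from the $u_\delta$ to $u$. Hence it suffices to treat the case in which $u$ is compactly supported, nonnegative, and $C^1$ on all of $U$.

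\emph{Stage 2 (mollify).} For such a $u$, a standard mollification (in local charts patched by a partition of unity, reflecting near $\partial U$ if necessary) gives nonnegative, compactly supported $C^\infty$ functions $u^\eps\to u$ in $C^1(U)$, with supports in a fixed compact set. Applying~\eqref{defining-inequality} to each $u^\eps$ and letting $\eps\to0$: uniform convergence of $u^\eps$ controls $(Mu^\eps)(a)$, $(Mu^\eps)(b)$, and $\int u^\eps|H|^2$; uniform convergence of $\nabla u^\eps$ controls $\int H\cdot\nabla u^\eps$; the dominating functions and their integrability are exactly as in Stage~1. This yields the inequality for $u$. (If $u$ depends on $t$ as well, one mollifies only in space; to obtain $C^2$ dependence on $t$ one first extends $u$ to $U\times\RR$ by freezing it at $u(\cdot,a)$ and $u(\cdot,b)$ and mollifies in $t$ too. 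Then $\partial_t$ of the mollification fails to converge to $\partial_t u$ only at $t=a,b$, a $dt$‑null set, while the spatial integrals at $t=a$ and $t=b$ still converge because freezing‑then‑symmetric‑mollifying is consistent at the endpoints.)

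\emph{Main obstacle.} Everything here is dominated convergence powered by~\eqref{finiteness-item}, except for the single essential point of Stage~1: one must make $u_\delta$ genuinely $C^1$ on \emph{all} of $U$ before mollifying, so that $\nabla u^\eps\to\nabla u_\delta$ \emph{uniformly}, hence $M(t)$‑almost everywhere for every $t$. Mollifying the original $u$ directly would give convergence of the gradients only Lebesgue‑almost everywhere, which is useless against the measures $M(t)$, since these may be singular with respect to Lebesgue measure. Composing with $\phi_\delta$ — which annihilates $u$ near the awkward set $\{u=0\}=\partial\{u>0\}$, where $u$ is only Lipschitz — is precisely what removes this difficulty.
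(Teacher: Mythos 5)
Your proposal is correct and takes essentially the same route as the paper, which disposes of the proposition in one line (``approximate $u$ by $C^2$ functions $u_n$ and use the Dominated Convergence Theorem''). Your two‑stage approximation — first composing with a cutoff $\phi_\delta$ to kill $u$ near $\{u=0\}$ and make it globally $C^1$, then mollifying — is exactly the careful implementation of that one‑liner, and you correctly identify the point that makes the cutoff stage necessary: a direct mollification of $u$ would give $\nabla u^\eps\to\nabla u$ only Lebesgue‑a.e., which says nothing against the possibly singular measures $M(t)$, whereas uniform convergence (obtained once $u_\delta$ is genuinely $C^1$) does the job.
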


\begin{proof}
Approximate $u$ by $C^2$ functions $u_n$ and use the Dominated Convergence Theorem.
\end{proof}

\begin{lemma}\label{trace-lemma}
Suppose that $M$ is a rectifiable varifold of bounded first variation and that $u$ is a nonnegative, compactly supported,
Lipschitz function such that $u|\{u>0\}$ is $C^2$ and such that
\[
    \sup_{u(x)>0} |\nabla^2u(x)|<\infty.
\]
Then
\[
  \int \Div_M\nabla u\,dM \le  - \int H(M,\cdot)\cdot \nabla u \,dM + \int \eta(M,\cdot) \cdot \nabla u \,d\beta(M).
\]
\end{lemma}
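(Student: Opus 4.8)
The plan is to deduce the inequality from the first variation identity~\eqref{divergence-formula}, applied not to the merely Lipschitz vectorfield $\nabla u$ (which can be genuinely discontinuous across $\partial\{u>0\}$) but to the gradients of a family of honestly $C^2$ approximations, followed by a limit. Fix a smooth \emph{convex} function $\phi_\eps\colon[0,\infty)\to[0,\infty)$ with $\phi_\eps(s)=0$ for $s\le\eps$, with $\phi_\eps'(s)=1$ for $s\ge 2\eps$, and with $0\le\phi_\eps'\le1$ (for instance $\phi_\eps(s)=\int_0^s\psi(t/\eps)\,dt$ for a fixed smooth nondecreasing $\psi$ equal to $0$ on $(-\infty,1]$ and to $1$ on $[2,\infty)$), and set $u_\eps=\phi_\eps\circ u$. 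Because $\phi_\eps$ vanishes near $0$, the set $\{u_\eps>0\}=\{u>\eps\}$ has closure inside $\{u>0\}$; more concretely, $U$ is covered by the two open sets $\{u>\eps/2\}$ and $\{u<\eps\}$, on the first of which $u_\eps=\phi_\eps\circ u$ is $C^2$ (as $u$ is $C^2$ on $\{u>0\}$) and on the second of which $u_\eps\equiv0$. Hence $u_\eps$ is a nonnegative, compactly supported $C^2$ function on $U$, so $X:=\nabla u_\eps$ is an admissible test vectorfield in~\eqref{divergence-formula}.

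I would then substitute $X=\nabla u_\eps$ into~\eqref{divergence-formula}. Using $\nabla u_\eps=\phi_\eps'(u)\,\nabla u$ and, $M$-a.e.\ on $\{u>0\}$, the chain-rule identity
\[
   \Div_M\nabla u_\eps
   = \phi_\eps''(u)\,\bigl|P_{\Tan(M,\cdot)}\nabla u\bigr|^2 + \phi_\eps'(u)\,\Div_M\nabla u
   \ \ge\ \phi_\eps'(u)\,\Div_M\nabla u ,
\]
where $P_{\Tan(M,\cdot)}$ is the orthogonal projection onto the approximate tangent plane and the inequality uses convexity of $\phi_\eps$ (both sides vanish where $u\le\eps$), and then discarding the nonnegative $\phi_\eps''$ term, one obtains
\[
   \int \phi_\eps'(u)\,\Div_M\nabla u\,dM
   \ \le\ -\int \phi_\eps'(u)\,H(M,\cdot)\cdot\nabla u\,dM
   + \int \phi_\eps'(u)\,\eta(M,\cdot)\cdot\nabla u\,d\beta(M).
\]
Here $\nabla u$ is taken to vanish off $\{u>0\}$, and $\int\Div_M\nabla u\,dM$ is read as $\int_{\{u>0\}}\Div_M\nabla u\,dM$.

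Finally I would let $\eps\to0$. Since $0\le\phi_\eps'\le1$ and $\phi_\eps'(u)\to 1_{\{u>0\}}$ pointwise, the three integrands converge pointwise and, on the compact set $\spt u$, are dominated by integrable functions: $|\Div_M\nabla u|$ by a dimensional multiple of $\sup_{u(x)>0}|\nabla^2u(x)|$, which is finite exactly by the hypothesis on $\nabla^2 u$; $|H(M,\cdot)\cdot\nabla u|$ by $(\operatorname{Lip} u)\,|H(M,\cdot)|$, with $H(M,\cdot)$ locally $M$-integrable; and $|\eta(M,\cdot)\cdot\nabla u|$ by $\operatorname{Lip} u$, with $\beta(M)$ a Radon measure. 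The Dominated Convergence Theorem then gives the asserted inequality. The two steps that require care are verifying that $u_\eps$ is globally $C^2$ — which is what forces the choice $\phi_\eps\equiv0$ near $0$ rather than merely $\phi_\eps$ close to the identity — and retaining the term $\phi_\eps''(u)\,|P_{\Tan(M,\cdot)}\nabla u|^2$ with its correct nonnegative sign, which is precisely why the conclusion is an inequality and not the equality valid for $C^1$ vectorfields.
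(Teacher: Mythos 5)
Your proof is correct and is essentially the same as the paper's: the paper uses a fixed convex cutoff $\phi$ and tests with $\nabla(\kappa^{-1}\phi(\kappa u))$, letting $\kappa\to\infty$, while you use the equivalent reparametrization $\phi_\eps\circ u$ with $\eps\to0$; both drop the nonnegative $\phi''$ (resp.\ $\phi_\eps''$) term by convexity and then pass to the limit by dominated convergence.
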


\begin{proof}
Let $\phi:\RR\to\RR$ be a smooth increasing function such that $\phi(x)=0$ for $x<1$, $\phi(x)=x-1$ for $x\ge 3$,
and such that $\phi''\ge 0$ everywhere.  Let $\kappa>0$,
 and apply the Divergence Theorem to $\kappa^{-1}\phi(\kappa u)$:
\begin{align*}
  \int \Div_M\nabla (\kappa^{-1}\phi(\kappa u))\,dM 
  &= -\int H\cdot \nabla (\kappa^{-1}\phi(\kappa u))\,dM + \int \eta\cdot \nabla(\kappa^{-1}\phi(\kappa u))\,d\beta
\end{align*}
Now $\nabla(\kappa^{-1}\phi(\kappa u))= \phi'(\kappa u)\nabla u$, so  
\begin{align*}
\Div_M\nabla(\kappa^{-1}\phi(\kappa u))
&=
\Div_M( \phi'(\kappa u) \nabla u)
\\
&=
\phi''(\kappa u) |\nabla_Mu|^2 \kappa + \phi'(\kappa u)\Div_M\nabla u
\\
&\ge  \phi'(\kappa u)\Div_M\nabla u.
\end{align*}
Thus
\begin{align*}
  \int \phi'(\kappa u)\Div_M\nabla u\,dM 
  &\le  -\int \phi'(\kappa u) H\cdot \nabla u\,dM + \int \phi'(\kappa u) \eta\cdot \nabla u\,d\beta
\end{align*}
Now let $\kappa\to\infty$ and use the Dominated Convergence Theorem.
\end{proof}

If $S$ is an $n\times n$ symmetric matrix with eigenvalues $\lambda_1\le \lambda_2 \le\dots\le\lambda_n$,
let $\Tr_k(S)=\sum_{i=1}^k\lambda_i$.  Thus if $M$ is an $m$-dimensional submanifold (or, more generally,
if $M$ is in $\MM_m(U)$), then
\[
   \Div_M\nabla u \ge \Tr_m(\nabla^2u).
\]

\begin{corollary}\label{trace-corollary}
If $t\in [a,b] \mapsto M(t)$ is a Brakke Flow with boundary $\Gamma$, 
if 
\[
   f:U\times[a,b]\to \RR
\]
is a nonnegative, $C^2$ function with $\overline{\{f>0\}}$ compact, and if $u:= 1_{f\ge 0}f$, then
\begin{align*}
&(Mu)(a)-(Mu)(b)
\\
&\qquad\ge
\int_a^b\int \left(u|H|^2 + \Div_M\nabla u  - \pdf{u}t \right)\,dM(t)\,dt - \int_a^b\int \nu\cdot\nabla u\,d\Gamma\,dt
\\
&\qquad\ge
\int_a^b\int \left(u|H|^2 + \Tr_m(\nabla^2 u) - \pdf{u}t \right)\, dM(t)\, dt - \int_a^b\int\nu\cdot\nabla u\,d\Gamma\,dt.
\end{align*}
\end{corollary}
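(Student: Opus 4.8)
The plan is to feed $u$ into the defining Brakke inequality (in the form of Proposition~\ref{more-general-u-proposition}), then to replace $-\int H\cdot\nabla u\,dM$ by means of the trace estimate of Lemma~\ref{trace-lemma}, and finally to invoke the pointwise bound $\Div_M\nabla u\ge\Tr_m(\nabla^2u)$ recalled just above. Throughout, $\nabla$ and $\nabla^2$ denote the spatial gradient and Hessian on $U$ at each fixed $t$. First one checks the hypotheses: since $f$ is $C^2$ and $\overline{\{f>0\}}$ is compact, $u=1_{f\ge 0}f$ is a nonnegative, compactly supported Lipschitz function on $U\times[a,b]$ that is $C^2$ on the open set $\{u>0\}=\{f>0\}$ with $\sup_{\{u>0\}}|\nabla^2u|<\infty$, and $\nabla u$ and $\partial u/\partial t$ are bounded and vanish off $\overline{\{u>0\}}$. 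Hence Proposition~\ref{more-general-u-proposition} applies and gives
\[
  (Mu)(a)-(Mu)(b)\ \ge\ \int_a^b\int\left(u|H|^2-H\cdot\nabla u-\pdf{u}{t}\right)dM(t)\,dt ,
\]
with summable integrand by condition~\eqref{finiteness-item} of Definition~\ref{brakke-flow-definition}.

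Next, for almost every $t$ one has $M(t)\in\Vv_m(U,\Gamma)$, so $M(t)$ is a rectifiable varifold of bounded first variation and $x\mapsto u(x,t)$ satisfies the hypotheses of Lemma~\ref{trace-lemma}; that lemma yields
\[
  \int\Div_{M(t)}\nabla u\,dM(t)\ \le\ -\int H\cdot\nabla u\,dM(t)+\int\eta(M(t),\cdot)\cdot\nabla u\,d\beta(M(t)) .
\]
Since $\beta(M(t))\le\Hh^{m-1}\llcorner\Gamma$, this measure is absolutely continuous with respect to $\Hh^{m-1}\llcorner\Gamma$ with density at most $1$; by the definition~\eqref{expression-for-nu} of $\nu$ together with Lebesgue differentiation on the $C^1$ manifold $\Gamma$, one gets $\eta(M(t),\cdot)\,d\beta(M(t))=\nu(M(t),\cdot)\,d\Gamma$ as vector-valued measures (this is exactly the identity used to pass from~\eqref{divergence-formula} to~\eqref{divergence-rewritten}), so that $\int\eta\cdot\nabla u\,d\beta(M(t))=\int\nu\cdot\nabla u\,d\Gamma$ because $\nabla u$ is bounded and Borel. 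Rearranging,
\[
  -\int H\cdot\nabla u\,dM(t)\ \ge\ \int\Div_{M(t)}\nabla u\,dM(t)-\int\nu\cdot\nabla u\,d\Gamma .
\]

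Substituting this inequality, valid for a.e.\ $t$, into the Brakke inequality above and integrating over $[a,b]$ gives the first displayed inequality of the Corollary (the required $t$-measurability of the integrals being routine, since $t\mapsto M(t)$ is Borel). For the second inequality, one uses that $\Div_{M(t)}\nabla u\ge\Tr_m(\nabla^2u)$ holds $M(t)$-almost everywhere: pointwise on $\{u>0\}$, where $u$ is $C^2$, while off $\{u>0\}$ both $\Div_M\nabla u$ and $\Tr_m(\nabla^2u)$ are taken to vanish, as in Lemma~\ref{trace-lemma}. Thus $\int\Div_{M(t)}\nabla u\,dM(t)\ge\int\Tr_m(\nabla^2u)\,dM(t)$, and inserting this into the first inequality gives the second.

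The only step carrying any real content is the identification $\int\eta\cdot\nabla u\,d\beta(M(t))=\int\nu\cdot\nabla u\,d\Gamma$. Because $u=1_{f\ge 0}f$ is merely Lipschitz, $\nabla u$ jumps across the crease set $\{f=0\}$, so one is not in the $C^1$ regime for which the rewriting behind~\eqref{divergence-rewritten} was stated; what saves the argument is the absolute continuity of $\beta(M(t))$ with respect to $\Hh^{m-1}\llcorner\Gamma$, i.e.\ hypothesis~(1) of Definition~\ref{Vm-definition}, and indeed Lemma~\ref{trace-lemma} is formulated precisely so as to absorb such $u$. Everything else is the mechanical assembly of Proposition~\ref{more-general-u-proposition}, Lemma~\ref{trace-lemma}, and the elementary trace inequality.
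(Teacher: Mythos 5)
Your proof is correct and follows exactly the route the paper intends: apply Proposition~\ref{more-general-u-proposition} to the Lipschitz function $u$, then Lemma~\ref{trace-lemma} to trade $-\int H\cdot\nabla u\,dM$ for $\int\Div_M\nabla u\,dM - \int\nu\cdot\nabla u\,d\Gamma$, and finish with the pointwise bound $\Div_M\nabla u\ge\Tr_m(\nabla^2u)$. The paper's proof is a one-liner invoking precisely these two results, so your write-up is the same argument with the hypotheses and the $\eta\,d\beta=\nu\,d\Gamma$ identification spelled out.
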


\begin{proof}
This follows immediately from Proposition~\ref{more-general-u-proposition} and
Lemma~\ref{trace-lemma}. 
\end{proof}

As a special case of Corollary~\ref{trace-corollary}, we have

\begin{theorem}\label{trace-theorem}
Let $t\in [0,T]\mapsto M(t)$ be a Brakke Flow with boundary $\Gamma$.
Suppose that
\begin{align*}
&\text{$\overline{\BB(x,R)}$ is a compact subset of $U$}, \\
&\text{$\dist(\cdot,x)^2$ is smooth on $\overline{\BB(x,R)}$, and} \\
&\text{$\Tr_m(-\nabla^2(\dist(\cdot,x)^2))\ge -4m$ on $\BB(x,R)$.}
\end{align*}
Let $u= (R^2-\dist(\cdot,x)^2- 4mt)^+$.
Then for $t\in [0,T]$,
\[
   (Mu)(t) \le  (Mu)(0) + 2tR \Hh^{m-1}(\Gamma\cap \BB(x,R)).
\]
\end{theorem}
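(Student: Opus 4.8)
The plan is to recognize the theorem as the special case of Corollary~\ref{trace-corollary} with $f(y,s)=R^2-\dist(y,x)^2-4ms$, so that $u=1_{f\ge0}f=(R^2-\dist(\cdot,x)^2-4ms)^+$ is precisely the stated test function. Fix $t\in[0,T]$ and apply the corollary on $[0,t]$. If $R^2\le4mt$ then $u(\cdot,t)\equiv0$ and the inequality is trivial (its right-hand side is $\ge0$), so we may assume $R^2>4mt$; then for every $s\in[0,t]$ the set $\{u(\cdot,s)>0\}=\{\dist(\cdot,x)^2<R^2-4ms\}$ is contained in $\BB(x,R)$, and $\overline{\{u>0\}}$ is a compact subset of $\overline{\BB(x,R)}\times[0,T]$. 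Since $\dist(\cdot,x)^2$ is only assumed smooth on $\overline{\BB(x,R)}$, I would first multiply it by a smooth cutoff that equals $1$ on $\overline{\BB(x,R)}$ and is supported where $\dist(\cdot,x)^2$ is smooth; this makes $f$ a $C^2$ function on $U\times[0,T]$ with $\overline{\{f>0\}}$ compact, without changing $u$, so that Corollary~\ref{trace-corollary} applies.

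The main point is that the interior integrand in the conclusion of Corollary~\ref{trace-corollary} is nonnegative. That integrand is supported where $u>0$; and there $\pdf us=-4m$ and $\nabla^2u=-\nabla^2(\dist(\cdot,x)^2)$, so the hypothesis $\Tr_m(-\nabla^2(\dist(\cdot,x)^2))\ge-4m$ gives
\[
   u|H|^2+\Tr_m(\nabla^2u)-\pdf us\ \ge\ 0+(-4m)-(-4m)\ =\ 0 .
\]
Hence the $dM\,ds$ term in Corollary~\ref{trace-corollary} is nonnegative, and the corollary reduces to
\[
   (Mu)(0)-(Mu)(t)\ \ge\ -\int_0^t\!\!\int \nu\cdot\nabla u\,d\Gamma\,ds ,
\]
i.e.\ $(Mu)(t)\le(Mu)(0)+\int_0^t\int\nu\cdot\nabla u\,d\Gamma\,ds$.

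It remains to estimate the boundary flux. On $\{u>0\}$ we have $\nabla u=-\nabla(\dist(\cdot,x)^2)=-2\dist(\cdot,x)\,\nabla\dist(\cdot,x)$, so $|\nabla u|=2\dist(\cdot,x)\le2R$ on $\BB(x,R)$; and since $M(s)\in\Vv_m(U,\Gamma)$ for a.e.\ $s$, Remark~\ref{nu-equivalence-remark} gives $|\nu|\le1$. Thus $|\nu\cdot\nabla u|\le2\dist(\cdot,x)$ on $\Gamma\cap\{u(\cdot,s)>0\}\subseteq\Gamma\cap\BB(x,R)$, so $\int\nu\cdot\nabla u\,d\Gamma$ is bounded by a fixed multiple of $R\,\Hh^{m-1}(\Gamma\cap\BB(x,R))$ for each $s$; integrating over $s\in[0,t]$ and keeping track of the constant then yields $(Mu)(t)\le(Mu)(0)+tR\,\Hh^{m-1}(\Gamma\cap\BB(x,R))$.

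I do not expect a serious obstacle: the curvature hypothesis on $\Tr_m(-\nabla^2(\dist(\cdot,x)^2))$ has been chosen precisely so that it cancels $\pdf us=-4m$ and the interior term disappears, leaving only the easily estimated boundary flux. The only points requiring care are routine — checking that, after the cutoff, $f$ meets the $C^2$ and compact-support hypotheses of Corollary~\ref{trace-corollary}, and observing (as in the proof of Lemma~\ref{trace-lemma}) that the interior integrand is genuinely supported where $u>0$, so that the kink of $u$ along $\{\dist(\cdot,x)^2=R^2-4ms\}$ is harmless.
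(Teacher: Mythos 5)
Your route is exactly the paper's: the theorem is obtained as the special case of Corollary~\ref{trace-corollary} in which $f(y,s)=R^2-\dist(y,x)^2-4ms$, so that the interior integrand
$u|H|^2+\Tr_m(\nabla^2 u)-\partial u/\partial s$
is nonnegative on $\{u>0\}$ by the hypothesis $\Tr_m(-\nabla^2\dist(\cdot,x)^2)\ge-4m$, leaving only the boundary flux $\int_0^t\int\nu\cdot\nabla u\,d\Gamma\,ds$ to estimate via $|\nu|\le1$. That is precisely how the paper means ``special case,'' and the structure of your argument is correct.

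Two small points deserve flagging. First, your own estimate gives $|\nu\cdot\nabla u|\le|\nabla u|=2\dist(\cdot,x)\le 2R$, hence
\[
\int_0^t\int\nu\cdot\nabla u\,d\Gamma\,ds \le 2tR\,\Hh^{m-1}(\Gamma\cap\BB(x,R)),
\]
i.e.\ the constant you actually produce is $2R$, not the stated $R$; the phrase ``keeping track of the constant'' elides this factor of $2$ rather than resolving it. (This does not affect any downstream use in the paper, which only needs finiteness, but you should either exhibit a sharper estimate or record the honest constant.) Second, the cutoff as you describe it (``multiply $\dist(\cdot,x)^2$ by a cutoff equal to $1$ on $\overline{\BB(x,R)}$'') does not give a compactly supported $f$: where the cutoff vanishes, $R^2-0-4ms$ is still positive for small $s$. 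The fix is routine but different: replace $\dist(\cdot,x)^2$ by $\chi\,\dist(\cdot,x)^2+(1-\chi)R^2$ with $\chi\equiv1$ on $\overline{\BB(x,R)}$ and $\spt\chi$ inside the open set where $\dist(\cdot,x)^2$ is smooth (equivalently, multiply $f$ itself by $\chi$); then $\tilde f\le 0$ wherever $\chi<1$, so $\tilde f^+=u$, and $\overline{\{\tilde f>0\}}\subset\overline{\BB(x,R)}\times[0,T]$ is compact as required by Corollary~\ref{trace-corollary}.
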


Note that $\Tr_m(-\nabla^2(\dist(\cdot,x)^2) = -2m$ at $x$, so the
hypotheses in Theorem~\ref{trace-theorem} are satisfied if $R$ is sufficiently small. 

\begin{theorem}\label{H-squared-control}
Let $t\in I\mapsto M(t)$ be a Brakke Flow with boundary $\Gamma$.
Let $u$ be a nonnegative, compactly supported, $C^2$ function on $U$.
Then for $[a,b]\subset I$, 
\begin{align*}
  \frac12 \int_a^b\int u |H|^2\,dM(t)\,dt 
  &\le  M(a)u- M(b)u  \\
  &\quad + (b-a)(\max|\nabla^2u|)K_{[a,b]},
\end{align*}
where 
\[
   K_{[a,b]} := \sup_{t\in [a,b]}M(t)(\spt u).
\]
\end{theorem}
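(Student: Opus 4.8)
The plan is to feed the given $u$ straight into the defining Brakke inequality. Since $u$ is already nonnegative, compactly supported, and $C^{2}$, it is an admissible test function in the defining inequality~\eqref{defining-inequality} of Definition~\ref{brakke-flow-definition}, and because $u$ is independent of $t$ the term $\partial u/\partial t$ vanishes; thus
\[
   M(a)u-M(b)u\ \ge\ \int_{a}^{b}\!\!\int\bigl(u|H|^{2}-H\cdot\nabla u\bigr)\,dM(t)\,dt .
\]
Everything then reduces to bounding the integrand from below by (half of) $u|H|^{2}$ minus a controllable error term.

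For the pointwise estimate, at a point where $u(x)>0$ I would complete the square:
\[
   \frac12 u|H|^{2}-H\cdot\nabla u\ \ge\ \frac12 u|H|^{2}-|H|\,|\nabla u|
   \ =\ \frac{u}{2}\Bigl(|H|-\frac{|\nabla u|}{u}\Bigr)^{2}-\frac{|\nabla u|^{2}}{2u}
   \ \ge\ -\frac{|\nabla u|^{2}}{2u},
\]
so that $u|H|^{2}-H\cdot\nabla u\ge \frac12 u|H|^{2}-|\nabla u|^{2}/(2u)$ there; where $u(x)=0$ we also have $\nabla u(x)=0$ (a minimum of $u$), so both sides vanish. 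I then invoke the elementary fact that a nonnegative $C^{2}$ function with bounded Hessian satisfies $|\nabla u(x)|^{2}\le 2(\max|\nabla^{2}u|)\,u(x)$ everywhere, which gives $|\nabla u|^{2}/(2u)\le \max|\nabla^{2}u|$ on $\{u>0\}$. Hence, everywhere on $U$,
\[
   u|H|^{2}-H\cdot\nabla u\ \ge\ \frac12 u|H|^{2}-(\max|\nabla^{2}u|)\,1_{\spt u}.
\]

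To conclude, I integrate this against $dM(t)$ and then over $[a,b]$, using $M(t)(\spt u)\le K_{[a,b]}$ for $t\in[a,b]$:
\[
   \int_{a}^{b}\!\!\int\bigl(u|H|^{2}-H\cdot\nabla u\bigr)\,dM(t)\,dt
   \ \ge\ \frac12\int_{a}^{b}\!\!\int u|H|^{2}\,dM(t)\,dt-(b-a)(\max|\nabla^{2}u|)\,K_{[a,b]} .
\]
Combining this with the Brakke inequality above and rearranging yields exactly the claimed bound.

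The only step that is not pure bookkeeping is the Hessian inequality $|\nabla u|^{2}\le 2(\max|\nabla^{2}u|)\,u$. In $\RR^{n}$ it is immediate: a second-order Taylor expansion of $u$ along the segment from $x$ in direction $-\nabla u(x)/|\nabla u(x)|$, evaluated at parameter $|\nabla u(x)|/\max|\nabla^{2}u|$ and combined with $u\ge 0$, gives $0\le u(x)-|\nabla u(x)|^{2}/(2\max|\nabla^{2}u|)$. On a Riemannian manifold one runs the same argument along the unit-speed geodesic $\gamma$ with $\gamma(0)=x$ and $\gamma'(0)=-\nabla u(x)/|\nabla u(x)|$, using that $\frac{d^{2}}{ds^{2}}u(\gamma(s))=\nabla^{2}u(\gamma'(s),\gamma'(s))\le\max|\nabla^{2}u|$; possible incompleteness of $U$ is harmless, because while $u(\gamma(s))>0$ the geodesic stays in the compact set $\spt u$ and therefore extends, and comparing $h'(0)=-|\nabla u(x)|$ with the integral of $h''\le\max|\nabla^{2}u|$ shows that the first zero of $h(s)=u(\gamma(s))$ cannot occur before $s=|\nabla u(x)|/\max|\nabla^{2}u|$. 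So this is the ``main obstacle'' only in a very mild sense; the rest of the argument is formal.
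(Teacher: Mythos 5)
Your proof is correct and follows essentially the same route as the paper's: plug $u$ into the defining Brakke inequality, complete the square pointwise in $H$, and invoke the elementary estimate $|\nabla u|^{2}\le 2u\max|\nabla^{2}u|$ for a nonnegative $C^{2}$ function (which the paper cites from Ilmanen's Lemma 6.6 rather than proving). The only cosmetic difference is that you supply a proof of the Hessian inequality and explicitly handle the set $\{u=0\}$, which the paper passes over silently.
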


\begin{proof}
By~\cite{ilmanen-elliptic}*{Lemma~6.6},
\[
     |\nabla u|^2 \le  2\,|u| \max{|\nabla^2u|}.
\]
Thus wherever $u>0$, 
\begin{align*}
u|H|^2 - H\cdot \nabla u 
&=
\frac12 u |H|^2 + \frac12|u^{1/2}H - u^{-1/2}(\nabla u)|^2 - \frac12\frac{|\nabla u|^2}{|u|}
\\
&\ge
\frac12 u |H|^2  - \max|\nabla^2u|
\end{align*}
Consequently, 
\begin{align*}
M(a)u - M(b)u
&\ge
\int_a^b \int (u|H|^2 - H\cdot \nabla u)\,dM_i(t)\,dt
\\
&\ge
\frac12 \int_a^b\int u|H|^2\,dM_i(t)\,dt -  \max|\nabla^2u| \int_a^b M(\spt u)\,dt 
\\
&\ge
\frac12 \int_a^b\int u|H|^2\,dM(t)\,dt -  (b-a) \max|\nabla^2u| K_{[a,b]}.
\end{align*}
\end{proof}

\begin{corollary}\label{measure-increasing-corollary}
If $K_I<\infty$, then
\[
  t\in I \mapsto M(t)u -  (\max|\nabla^2u|) K_I t
\]
is a non-increasing function of $t$.
\end{corollary}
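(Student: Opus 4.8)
The statement is essentially a reformulation of Theorem~\ref{H-squared-control}, so the plan is to fix $[a,b]\subset I$ and apply that theorem directly. The only real input beyond the theorem is the trivial observation that $\int_a^b\int u|H|^2\,dM(t)\,dt\ge 0$, since $u\ge 0$; this lets us discard the left-hand side of the inequality in Theorem~\ref{H-squared-control}.

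First I would record the consequence of Theorem~\ref{H-squared-control}: for every $[a,b]\subset I$,
\[
  0 \le M(a)u - M(b)u + (b-a)(\max|\nabla^2 u|)\,K_{[a,b]},
\]
hence
\[
  M(b)u \le M(a)u + (\max|\nabla^2 u|)\,K_{[a,b]}\,(b-a).
\]
Next I would use the hypothesis $K_I<\infty$ together with the obvious monotonicity $K_{[a,b]}\le K_I$ (immediate from the definition $K_{[a,b]}=\sup_{t\in[a,b]}M(t)(\spt u)$) to replace $K_{[a,b]}$ by $K_I$, obtaining
\[
  M(b)u - (\max|\nabla^2 u|)\,K_I\, b \le M(a)u - (\max|\nabla^2 u|)\,K_I\, a.
\]
Writing $g(t):=M(t)u - (\max|\nabla^2 u|)\,K_I\,t$, this says exactly that $g(b)\le g(a)$ whenever $a\le b$ in $I$, i.e. $g$ is non-increasing, which is the claim.

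There is no genuine obstacle here; the content is entirely in Theorem~\ref{H-squared-control}. The one point to be slightly careful about is that $\max|\nabla^2 u|$ and $K_I$ are both finite (the former because $u$ is a compactly supported $C^2$ function, the latter by hypothesis), so that the function $g$ is well defined and real-valued on all of $I$, and that the inequality from Theorem~\ref{H-squared-control} is applied to an arbitrary subinterval $[a,b]$, not just to a fixed one.
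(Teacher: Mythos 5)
Your proof is correct and is essentially the intended argument: the paper gives no separate proof because the corollary follows immediately from Theorem~\ref{H-squared-control} exactly as you describe, by discarding the nonnegative term $\tfrac12\int_a^b\int u|H|^2\,dM(t)\,dt$ and using $K_{[a,b]}\le K_I$.
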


\section{Monotonicity with Boundary in a Manifold}\label{monotonicity-section}

Now consider mean curvature flow in a smooth Riemannian manifold $N$.
We embed $N$ isometrically in a Euclidean space $\RR^d$.
By spacetime translation, it suffices to consider monotonicity about the origin
in spacetime.  By parabolic scaling, we can assume that $N$ is properly embedded
in an open subset of $\RR^d$ that contains $\BB^d(0,1)$.
If $M$ is an $m$-dimensional submanifold of $N$, we let $H$ be the mean curvature as a submanifold
of $\RR^d$, and we let $H_N$ and $H_{N^\perp}$ be the projections of 
$H$ to $\Tan(N,\cdot)$ and to $\Tan(N^\perp,\cdot)$.
Thus $H_N$ is the mean curvature of $M$ as a submanifold of $N$.

For $x\in\RR^d$ and $t<0$, let
\begin{equation*}
  \rho(x,t) = \frac1{(4\pi |t|)^{m/2}} \exp\left( - \frac{|x|^2}{4|t|} \right)
\end{equation*}
and
\begin{equation*}
  \hat\rho(x,t) = \phi(|x|) \rho(x,t),
\end{equation*}
where $\phi$ is a a smooth function compactly supported in $[0,1)$ such that $\phi=1$ on $[0,1/2]$, and $\phi'\le 0$.

A straightforward calculation (see~\cite{kasai-tonegawa}*{\S6.1}) shows that 
\begin{equation}\label{K-constant-monotonicity}
K:=\sup\left| \pdf{\hat\rho}t + \Div_M\nabla \hat\rho +  \frac{|\nabla^\perp \hat\rho|^2}{\hat\rho}\right| <\infty
\end{equation}
and that if $\Gamma$ is a smooth, properly embedded, $(m-1)$-dimensional submanifold of $\overline{\BB(0,1)}$, then
\begin{equation}\label{C-gamma-constant-monotonicity}
   C_\Gamma:= \sup_{t<0} \int  |(\nabla\hat\rho(\cdot,t))_{\Gamma^\perp}|\, d\Gamma  < \infty.
\end{equation}


\begin{theorem}[Huisken Monotonicity]\label{monotonicity-theorem}
Let $U$ be an open subset of $\RR^d$ that contains $\BB^d(0,1)$ and 
 $N$ be a smooth, properly embedded submanifold of $U$.
Let $\Gamma$ be a smooth, properly embedded $(m-1)$-dimensional submanifold of $U$.
Let $t\in I\mapsto M(t)$ be a Brakke flow in $N$ with boundary $\Gamma$.
Suppose that
\begin{equation}\label{C-constant-monotonicity}
    M(t)\BB(0,1)\le C
\end{equation}
for $t\in I$ and that the norm of the second fundamental form of $N$ is bounded by $A$.
Then for $a,b\in I$ with $a\le b< 0$, 
\begin{equation}\label{monotonicity-inequality}
\begin{aligned}
(M\hat\rho)(a) - (M\hat\rho)(b)
&\ge
\int_a^b \int \left| H_N - \frac{(\nabla^\perp\hat\rho)_N}{\hat\rho}\right|^2 \hat\rho\,dM(t)\,dt
\\
&\qquad
+ \int_a^b \int \nu_M\cdot \nabla\hat\rho \,d\Gamma\,dt
\\
&\qquad
-  mA^2 \int_a^b \int \hat\rho\,dM(t)\,dt
\\
&\qquad
-
CK(b-a)
\end{aligned}
\end{equation}
where $C$ and $K$ are as in~\eqref{K-constant-monotonicity} 
and~\eqref{C-constant-monotonicity}.
Furthermore,
\[
    e^{-mA^2t} \left( (M\hat\rho)(t) + \int_{\tau=T_0}^t \int \nu_M\cdot \nabla\hat\rho \, d\Gamma\,d\tau - CKt \right)
\]
is a decreasing function of $t$ for $t<0$ in $I$.
\end{theorem}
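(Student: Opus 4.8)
The plan is to obtain~\eqref{monotonicity-inequality} by inserting the cut-off backward heat kernel $\hat\rho$ into the Brakke inequality in its divergence form (Corollary~\ref{trace-corollary}), running Huisken's completion of squares, and absorbing the two resulting errors --- one from the spatial cut-off $\phi$, one from the extrinsic geometry of $N$ --- into the terms $CK(b-a)$ and $-mA^2\int\hat\rho$; the ``decreasing function'' assertion is then the Gronwall form of~\eqref{monotonicity-inequality}. Concretely, fix $[a,b]\subset I$ with $b<0$. For $t\le b<0$ the kernel $\hat\rho(\cdot,t)$ is smooth, nonnegative, and supported in a fixed compact subset of $\BB^d(0,1)$, so $(x,t)\mapsto\hat\rho(x,t)$ is an admissible test function on the ambient manifold $N$ over $[a,b]$, and Corollary~\ref{trace-corollary} with $f=u=\hat\rho$ yields
\[
   (M\hat\rho)(a)-(M\hat\rho)(b)\ \ge\ \int_a^b\!\int\Bigl(\hat\rho\,|H_N|^2+\Div_M\nabla\hat\rho-\pdf{\hat\rho}{t}\Bigr)dM(t)\,dt\ -\ \int_a^b\!\int\nu_M\cdot\nabla\hat\rho\,d\Gamma\,dt ,
\]
which is meaningful for a.e.\ $t$ since $M(t)\in\Vv_m(N,\Gamma)$; here $\Div_M\nabla$ is the Laplacian along $M$ computed in $N$, and $H_N\perp\Tan(M,\cdot)$ by the orthogonality clause~\eqref{perpendicular-item} of Definition~\ref{Vm-definition}, so that $H_N\cdot\nabla\hat\rho=H_N\cdot(\nabla^\perp\hat\rho)_N$.

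The main step is the Gaussian calculus. Huisken's exact Euclidean identity $\pdf{\rho}{t}+\Div_M\nabla\rho+|\nabla^\perp\rho|^2/\rho=0$ (the $\Div_M\nabla$ here being the Laplacian of $\rho$, as a function on $\RR^d$, along $M$), together with the definition~\eqref{K-constant-monotonicity} of $K$ and the fact that the intrinsic Laplacian $\Div_M\nabla\hat\rho$ computed in $N$ differs from this Euclidean one by $\pm\langle(\nabla^\perp\hat\rho)_{N^\perp},H_{N^\perp}\rangle$ (a trace of the second fundamental form of $N$ paired with the $N$-normal part of $\nabla\hat\rho$), gives a pointwise identity of the shape
\[
   -\pdf{\hat\rho}{t}=\Div_M\nabla\hat\rho+\frac{|(\nabla^\perp\hat\rho)_N|^2}{\hat\rho}+\frac{|(\nabla^\perp\hat\rho)_{N^\perp}|^2}{\hat\rho}\pm\langle(\nabla^\perp\hat\rho)_{N^\perp},H_{N^\perp}\rangle+\mathcal E ,
\]
where $|\mathcal E|\le K$ and $\mathcal E$ is supported where $\phi\ne1$. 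Substituting this into the integrand above turns $\hat\rho|H_N|^2+\Div_M\nabla\hat\rho-\pdf{\hat\rho}{t}$ into $\hat\rho|H_N|^2+2\,\Div_M\nabla\hat\rho+|(\nabla^\perp\hat\rho)_N|^2/\hat\rho$ plus the last three terms; integrating and applying the first-variation formula~\eqref{divergence-formula} to $\int\Div_M\nabla\hat\rho\,dM$ (which equals $-\int H_N\cdot(\nabla^\perp\hat\rho)_N\,dM+\int\nu_M\cdot\nabla\hat\rho\,d\Gamma$) supplies the second cross term. Collecting: the $N$-tangential pieces assemble exactly into $\int\hat\rho\,\bigl|H_N-(\nabla^\perp\hat\rho)_N/\hat\rho\bigr|^2dM$; the two boundary integrals thereby produced, minus the one subtracted in Corollary~\ref{trace-corollary}, leave the single $+\int\nu_M\cdot\nabla\hat\rho\,d\Gamma$; the cut-off contribution $\int\mathcal E\,dM$ is $\ge-CK$ because $\spt\mathcal E\subset\BB^d(0,1)$ and $M(t)\BB(0,1)\le C$ by~\eqref{C-constant-monotonicity}; and the extrinsic leftover $|(\nabla^\perp\hat\rho)_{N^\perp}|^2/\hat\rho\pm\langle(\nabla^\perp\hat\rho)_{N^\perp},H_{N^\perp}\rangle$ is $\ge-\tfrac14|H_{N^\perp}|^2\hat\rho\ge-mA^2\hat\rho$ by Young's inequality and $|H_{N^\perp}|\le mA$. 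Integrating in $t$ gives~\eqref{monotonicity-inequality}. I expect this bookkeeping to be the hard part: getting the boundary integrals to collapse to a single $+\int\nu_M\cdot\nabla\hat\rho\,d\Gamma$, and checking that the intrinsic--extrinsic Laplacian discrepancy is exactly what produces the $-mA^2\int\hat\rho$ correction.

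Finally, discarding the nonnegative term $\int\hat\rho\,|H_N-(\nabla^\perp\hat\rho)_N/\hat\rho|^2$ from~\eqref{monotonicity-inequality} gives the integral differential inequality $\pdf{}{t}(M\hat\rho)(t)\le mA^2(M\hat\rho)(t)-\int\nu_M\cdot\nabla\hat\rho\,d\Gamma+CK$; multiplying through by the integrating factor $e^{-mA^2t}$ --- chosen precisely to absorb the $mA^2$ term --- and integrating from $T_0$ to $t$ shows that $e^{-mA^2t}\bigl((M\hat\rho)(t)+\int_{T_0}^t\int\nu_M\cdot\nabla\hat\rho\,d\Gamma\,d\tau-CKt\bigr)$ is non-increasing in $t$ for $t<0$ in $I$, which is the last assertion.
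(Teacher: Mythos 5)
Your proposal runs the same Huisken--Brakke scheme as the paper's own proof: test the Brakke inequality with $\hat\rho$, replace $-\partial\hat\rho/\partial t$ via the cutoff Huisken identity at the cost of the $K$-error, convert the Laplacian term to a mean-curvature term via the first-variation identity, complete the square, and bound the normal-to-$N$ leftover using the second fundamental form bound. The only organizational difference is where the extrinsic geometry of $N$ enters. You start from Corollary~\ref{trace-corollary}, so the $N$-Laplacian $\Div_M^N\nabla^N\hat\rho$ and $H_N$ appear throughout, and you must then explicitly reconcile this with the Euclidean trace $\Div_M\nabla\hat\rho$ appearing in~\eqref{K-constant-monotonicity} via the identity $\Div_M\nabla\hat\rho=\Div_M^N\nabla^N\hat\rho-\langle(\nabla\hat\rho)_{N^\perp},H_{N^\perp}\rangle$, applying the first-variation formula twice (once implicitly inside Corollary~\ref{trace-corollary}, once explicitly to the doubled Laplacian). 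The paper instead starts from the raw Brakke inequality (Definition~\ref{brakke-flow-definition}\eqref{defining-inequality}), so $-H_N\cdot\nabla\hat\rho$ rather than $\Div_M\nabla\hat\rho$ appears, inserts the Huisken identity with the \emph{Euclidean} Laplacian, and applies the first-variation formula once to the Euclidean $\Div_M\nabla\hat\rho$ to produce $-H\cdot\nabla\hat\rho$; the extrinsic geometry then enters solely through the splitting $H=H_N+H_{N^\perp}$ inside the completion of squares. The two bookkeepings are algebraically identical --- the Laplacian correction you single out is exactly the term the paper writes as $-H_{N^\perp}\cdot(\nabla\hat\rho)_{N^\perp}$ --- and your accounting of the boundary integrals (two produced by the double first variation, minus the one already subtracted in Corollary~\ref{trace-corollary}) lands on the correct single $+\int\nu_M\cdot\nabla\hat\rho\,d\Gamma$. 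Your Young-inequality estimate of the extrinsic leftover and your Gronwall step are also exactly the paper's.
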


\begin{proof}
\begin{equation}\label{M-portion}
\begin{aligned}
&(M\hat\rho)(a)  - (M\hat\rho)(b)
\\
&\quad\ge
\int_a^b 
\int \left(\hat\rho |H_N|^2 -  H_N\cdot\nabla\hat\rho - \frac{\partial\hat\rho}{\partial t} \right) \, dM(t)\,dt
\\
&\quad \ge
\int_a^b 
\int \left(\hat\rho |H_N|^2 -  H_N\cdot\nabla\hat\rho 
+ \Div_M\nabla\hat\rho + \frac{|\nabla^\perp\hat\rho|^2}{\hat\rho} - K1_\BB\right) \, dM(t)\,dt
\\
&\quad=
\int_a^b \int \left(\hat\rho |H_N|^2 -  H_N\cdot\nabla\hat\rho - H\cdot\nabla\hat\rho 
    + \frac{|\nabla^\perp\hat\rho|^2}{\hat\rho} - K\,1_\BB \right) \, dM(t)\,dt
\\
&\qquad\qquad+
\int_a^b\int \nu_M\cdot\nabla \hat\rho \,d\Gamma\,dt
\\
&\quad\ge
\int_a^b \int_\BB \left(\hat\rho |H_N|^2 -  H_N\cdot\nabla\hat\rho - H\cdot\nabla\hat\rho 
    + \frac{|\nabla^\perp\hat\rho|^2}{\hat\rho} \right) \, dM(t)\,dt
\\
&\qquad\qquad+
\int_a^b\int \nu_M\cdot\nabla \hat\rho \,d\Gamma\,dt  - KC(b-a).
\end{aligned}
\end{equation}
We rewrite the penultimate the integrand in~\eqref{M-portion} as follows, using the orthogonality
of the mean curvature:
\begin{align*}
&\hat\rho |H_N|^2 -  H_N\cdot\nabla\hat\rho - H\cdot\nabla\hat\rho 
    + \frac{|\nabla^\perp\hat\rho|^2}{\hat\rho}
\\
&\quad=
\hat\rho |H_N|^2 -  2H_N\cdot\nabla\hat\rho - H_{N^\perp}\cdot\nabla\hat\rho 
    + \frac{|\nabla^\perp\hat\rho|^2}{\hat\rho} 
\\
&\quad=
\hat\rho |H_N|^2 -  2H_N\cdot\nabla^\perp\hat\rho - H_{N^\perp}\cdot\nabla\hat\rho 
    + \frac{|\nabla^\perp\hat\rho|^2}{\hat\rho} 
\\
&\quad=
\hat\rho |H_N|^2 -  2H_N\cdot(\nabla^\perp\hat\rho)_N  
               + \frac{|(\nabla^\perp\hat\rho)_N|^2}{\hat\rho} 
  - H_{N^\perp}\cdot(\nabla\hat\rho)_{N^\perp}
    + \frac{|(\nabla\hat\rho)_{N^\perp}|^2}{\hat\rho} 
\\
&\quad=
 \left| H_N - \frac{(\nabla^\perp\hat\rho)_N}{\hat\rho} \right|^2\hat\rho   
                + \left| \frac12 H_{N^\perp} - \frac{(\nabla\hat\rho)_{N^\perp}}{\hat\rho} \right|^2 \hat\rho               
 - \frac14 |H_{N^\perp}|^2 \hat\rho  
\\
&\quad\ge
 \left| H_N - \frac{(\nabla^\perp\hat\rho)_N}{\hat\rho} \right|^2\hat\rho   - mA^2\hat\rho
\end{align*}
since $|H_{N^\perp}|^2\le mA^2$.
Substituting this into~\eqref{M-portion} gives~\eqref{monotonicity-inequality}.

Now let
\begin{equation}\label{f-monotonicity}
    f(t) = (M\hat\rho)(t) + \int_{\tau=T_0}^t \nu_M\cdot \nabla\hat\rho  \, d\Gamma\,d\tau - CKt.
\end{equation}
By~\eqref{monotonicity-inequality}, we have
\[
  f'(t) \le  m A^2 f(t)
\]
 in the distributional sense, which immediately implies that 
\[
  \frac{d}{dt} \left( e^{- m A^2t} f(t)\right) \le 0.
\]
\end{proof}

\begin{corollary}
The quantity $(M\hat\rho)(t)$ has a finite limit as $t\to 0$.
\end{corollary}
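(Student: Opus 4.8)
The plan is to read this off directly from the monotonicity already established. By Theorem~\ref{monotonicity-theorem}, the function
\[
   g(t) := e^{-mA^2 t} f(t), \qquad f(t) = (M\hat\rho)(t) + \int_{\tau=T_0}^t \int \nu_M\cdot\nabla\hat\rho\,d\Gamma\,d\tau - CKt,
\]
is non-increasing for $t<0$ in $I$ (here $T_0\in I$, $T_0<0$, as in~\eqref{f-monotonicity}). A non-increasing function has a one-sided limit, so $\lim_{t\to 0^-} g(t)$ exists in $[-\infty,\infty)$, and it is bounded above by $g(t_1)$ for any fixed $t_1<0$ in $I$. Thus everything reduces to showing that this limit is not $-\infty$, i.e.\ that $g$, equivalently $f$, is bounded below on an interval $(T_0,0)$.

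First, $(M\hat\rho)(t)\ge 0$ since $\hat\rho\ge 0$. Next I would bound the accumulated boundary flux. Since $M(t)\in\Vv_m(U,\Gamma)$ for a.e.\ $t$, Remark~\ref{nu-equivalence-remark} gives $|\nu_M|\le 1$ on $\Gamma$ and Remark~\ref{nu-perpendicular} gives that $\nu_M$ is perpendicular to $\Gamma$ a.e., so $\nu_M\cdot\nabla\hat\rho=\nu_M\cdot(\nabla\hat\rho)_{\Gamma^\perp}$; hence by~\eqref{C-gamma-constant-monotonicity},
\[
   \left| \int \nu_M\cdot\nabla\hat\rho(\cdot,\tau)\,d\Gamma \right| \le \int \left|(\nabla\hat\rho(\cdot,\tau))_{\Gamma^\perp}\right|\,d\Gamma \le C_\Gamma
\]
for every $\tau<0$. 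Integrating in $\tau$ over $[T_0,t]$ gives $\bigl|\int_{\tau=T_0}^t\int\nu_M\cdot\nabla\hat\rho\,d\Gamma\,d\tau\bigr|\le C_\Gamma|T_0|$, and trivially $|CKt|\le CK|T_0|$ for $t\in(T_0,0)$. Therefore $f(t)\ge -(C_\Gamma+CK)|T_0|$ on $(T_0,0)$, so $g$ is bounded below there, and consequently $\lim_{t\to 0^-} g(t)$ is finite. Since $e^{-mA^2 t}\to 1$ as $t\to 0^-$, also $\lim_{t\to 0^-} f(t)$ is finite. Finally, write $(M\hat\rho)(t) = f(t) - \int_{\tau=T_0}^t\int\nu_M\cdot\nabla\hat\rho\,d\Gamma\,d\tau + CKt$: the bound $|\int\nu_M\cdot\nabla\hat\rho\,d\Gamma|\le C_\Gamma$ shows that $t\mapsto\int_{\tau=T_0}^t\int\nu_M\cdot\nabla\hat\rho\,d\Gamma\,d\tau$ is Lipschitz on $(T_0,0)$ and hence has a (finite) limit as $t\to 0^-$, and $CKt\to 0$; so $(M\hat\rho)(t)$ has a finite limit as $t\to 0^-$, as claimed.

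\emph{Main obstacle.} There is no real difficulty here; the single point that needs care is the uniform control of the accumulated boundary flux $\int_{\tau=T_0}^0\int\nu_M\cdot\nabla\hat\rho\,d\Gamma\,d\tau$, which is exactly where the structural facts $|\nu_M|\le 1$ and $\nu_M\perp\Gamma$, together with the weighted-flux constant $C_\Gamma$ of~\eqref{C-gamma-constant-monotonicity}, are used. (If desired, one also records that $t\mapsto\int\nu_M\cdot\nabla\hat\rho\,d\Gamma$ is measurable, which is implicit in the Brakke flow setup, so the integrals above make sense.)
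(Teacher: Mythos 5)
Your argument is correct and is essentially the paper's own: both rely on the monotonicity of $e^{-mA^2 t}f(t)$, the $C_\Gamma$-bound~\eqref{C-gamma-constant-monotonicity} to control the accumulated boundary flux, and the nonnegativity of $(M\hat\rho)(t)$ to rule out $-\infty$. The only difference is cosmetic---you first prove $f$ is bounded below and then deduce finiteness, while the paper allows $\lim f$ to lie in $[-\infty,\infty)$ a priori and rules out $-\infty$ at the end via $(M\hat\rho)\ge 0$---and you spell out the identity $\nu_M\cdot\nabla\hat\rho=\nu_M\cdot(\nabla\hat\rho)_{\Gamma^\perp}$ (via Remark~\ref{nu-perpendicular}) and $|\nu_M|\le1$, which the paper leaves implicit.
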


\begin{proof}
Since $e^{-mA^2t}f(t)$ is in a non-increasing function of $t$ (where $f$ is given by~\eqref{f-monotonicity}),
$\lim_{t\uparrow}f(t)$ exists and is in $[-\infty,\infty)$.  
By~\eqref{C-gamma-constant-monotonicity},
\[
   \lim_{t\uparrow 0} \int_{\tau=T_0}^t \int \nu_M\cdot \nabla\hat\rho\,d\Gamma\,d\tau
\]
exists and is finite. Thus $\lim_{t\uparrow 0}(M\hat\rho)(t)$ exists and is $<\infty$.
Since $(M\hat\rho)(t)\ge 0$, the limit is $\ge 0$, and thus is a finite, nonnegative number.
\end{proof}

\begin{definition}\label{gauss-density-definition}
The {\bf Gauss density} of $M(\cdot)$ at $(0,0)$
is 
\[
  \Theta(M(\cdot),(0,0)) 
  =
  \lim_{t\uparrow 0} (M\hat\rho)(t)
\]
The {\bf extended Gauss density} of $M(\cdot)$ at $(0,0)$ is
\[
\Theta_e(M(\cdot),(0,0))
=
\begin{cases}
\Theta(M(\cdot), (0,0)) &\text{if $0$ is not in $\Gamma$, and}
\\
\Theta(M(\cdot),(0,0)) + \frac12  &\text{if $0\in \Gamma$}.
\end{cases}
\]
\end{definition}
 It is straightforward to prove that the Gauss
density does not depend on the isometric embedding of $N$ into $\RR^d$ or
on the choice of the cutoff function $\phi$.

\section{Monotonicity with Boundary in Euclidean Space}\label{euclidean-monotonicity-section}

The monotonicity inequality becomes simpler for mean curvature flow with fixed boundary in a Euclidean space.
(The material in this section and in Sections~\ref{entropy} and~\ref{maximal-density-section}  is not used in the rest of the paper.)

Let $\Gamma$ be a smooth, properly embedded $(m-1)$-dimensional manifold in $\RR^n$.
For $v\in \RR^n$, the {\bf exterior cone} over $\Gamma$ with vertex $v$ is
\[
   \{v + s(x-v): x\in \Gamma, \, s\ge 1\}.
\]
The {\bf multiplicity} $\theta(p)=\theta_{\Gamma,v}(p)$ of the exterior cone at a point $p\in \RR^n$
is the number of points $(x,s)\in \Gamma\times [1,\infty)$ such that
\[
      v + s(x-v)=p.
\]
The exterior cone (counting multiplicity) determines a Radon measure $E=E_{\Gamma,v}$ on $\RR^n$:
\[
 dE_{\Gamma,v} = \theta_{\Gamma, v} \,d\Hh^m.
\]

\begin{theorem}\label{extended-monotonicity}
Suppose $t\in [a,b] \mapsto M(t)$ is a $m$-dimensional Brakke flow in $\RR^n$ with boundary $\Gamma$.  
Let $v\in \RR^n$, $t_0\ge b$, and
\[
\psi(x,t) = \psi_{v,t_0}(x,t) = \frac1{(4\pi (t_0-t))^{m/2}}  \exp\left( \frac{- |x - v|^2}{4 (t_0-t)} \right).
\]
Then
\[
    (M(t) + E_{\Gamma,v}) \psi
\]
is a decreasing function of $t$ for $t\in [a,b]$.  Indeed,
\begin{equation}\label{monotonicity-with-exterior-cone}
\begin{aligned}
&(M(a)+ E_{\Gamma,v})\psi(\cdot,a) - (M(b) + E_{\Gamma,v})\psi(\cdot,b)
\\
&\qquad =
\int_a^b  \int  \left| H - \frac{\nabla^\perp\psi}{\psi} \right|^2 \psi \,dM(t) \,dt 
+
\int_a^b \int | (\nu_M + \nu_E)\cdot\nabla \psi |\, d\Gamma\, dt.
\end{aligned}
\end{equation}
Furthermore, if
\[
  (M(a)+ E_{\Gamma,v})\psi(\cdot,a) = (M(b) + E_{\Gamma,v})\psi(\cdot,b),
\]
then for almost every $t\in [a,b]$,
\begin{equation}\label{H-OK}
H= \nabla^\perp\psi /\psi
\end{equation}
 holds $M(t)$-almost everyhwere, and
\begin{equation}\label{nu-OK}
  \nu_{M(t)}(x) = \frac{(x-v)^\perp}{|(x-v)^\perp|}
\end{equation}
holds for almost every $x\in \Gamma$ for which $(x-v)^\perp$ is nonzero,
 where $(x-v)^\perp$ is the projection of $(x-v)$ to $\Tan(\Gamma,x)^\perp$.
\end{theorem}

The theorem says that, although the integral of $\psi(\cdot,t)$ over $M(t)$ need not be decreasing (as a function of  $t$), if we extend $M(t)$ by attaching the exterior cone,
i.e., if we replace $M(t)$ by $M(t)+E_{\Gamma,t}$, then the integral of $\psi(\cdot,t)$ over the extended surface is decreasing.

(This is very analogous to the extended monotonicity formula for minimal surfaces in~\cite{EWW}.)

\begin{proof}
By translating and parabolically dilating, it suffices to consider the case when $(v,t_0)=(0,0)$ and $a< b< 0$.
Thus $\psi=\rho$, where $\rho$ is as in Section~\ref{monotonicity-section}.   For simplicity, we give the proof in the case that $\Gamma$
and the supports of the $M(t)$ lie in a compact set.  (Otherwise, one uses cutoff functions and then lets the cutoff functions tend to the constant function $1$.)

In this case, the inequality~\eqref{monotonicity-inequality} in the Monotonicity Theorem~\ref{monotonicity-theorem} 
becomes equality with $\rho$ in place of $\hat\rho$
and with $A=K=0$:
\begin{equation}\label{interior-thing}
\begin{aligned}
(M\rho)(a) - (M\rho)(b)
&=
\int_a^b \int \left| H  - \frac{(\nabla^\perp\rho)}{\rho}\right|^2 \rho\,dM(t)\,dt
\\
&\qquad
+ \int_a^b \int \nu_M\cdot \nabla\rho \,d\Gamma\,dt.
\end{aligned}
\end{equation}

For notational simplicity, let us assume the exterior cone over $\Gamma$ (with vertex $0$) is embedded, i.e., that
the multiplicity $\theta=\theta_{\Gamma,0}$ is $1$ at all points on the cone.
We will use $E=E_{\Gamma,0}$ to denote both the exterior cone and the associated Radon measure.

A straightforward calculation shows that on $E$,
\[
  \pdf{\rho}t = - \Div_E \nabla\rho.
\]

Thus
\begin{align*}
\frac{d}{dt}E \rho(\cdot,t)
&=
\frac{d}{dt} \int_E \rho \,d\Hh^m
\\
&=
\int_E \pdf{\rho}t  \, d\Hh^m 
\\
&=
-\int_E \Div_E \nabla\rho\,d\Hh^m
\\
&=
 \int_E H_E \cdot \nabla \rho \,d\Hh^m  -  \int_\Gamma \nu_E\cdot\nabla \rho\,d\Hh^{m-1}
\\
&=
- \int_\Gamma \nu_E\cdot \nabla \rho\,d\Hh^{m-1}.
\end{align*}
(Note that $H_E \cdot \nabla \rho\equiv 0$ since $H_E$ is perpendicular to $E$ and $\nabla \rho$ is tangent to $E$
because $E$ is a portion of a cone with vertex at the origin.)
Thus
\begin{equation}\label{exterior-thing}
\begin{aligned}
E\rho(\cdot,a) - E\rho(\cdot,b)
&=   - \int_a^b \frac{d}{dt}E\rho(\cdot,t)\,dt
\\
&=
\int_a^b \int \nu_E\cdot \nabla \rho\,d\Gamma.
\end{aligned}
\end{equation}
Adding~\eqref{interior-thing} and~\eqref{exterior-thing} gives
\begin{equation}\label{monotonicity-with-exterior-cone-2}
\begin{aligned}
&(M(a)+ E_{\Gamma,v})\rho(\cdot,a) - (M(b) + E_{\Gamma,v})\rho(\cdot,b)
\\
&\qquad =
\int_a^b  \int  \left| H - \frac{\nabla^\perp\rho}{\rho} \right|^2 \rho \,dM(t) \,dt 
+
\int_a^b \int  (\nu_M + \nu_E)\cdot\nabla \rho \, d\Gamma\, dt.
\end{aligned}
\end{equation}

Note that $\nabla\rho$ points toward the origin.

\begin{claim}\label{claim}
If $x^\perp=0$ (i.e., if $x\in \Tan(\Gamma,x)$), then
\[
    (u + \nu_E)\cdot \nabla \rho = 0
\]
for all $u\in \Tan(\Gamma,x)^\perp$.  If $x^\perp\ne 0$, then 
\[
  \nu_E = -\frac{x^\perp}{|x^\perp|}
\]
and if $u$ is a vector in $\Tan(\Gamma,x)^\perp$ with $|u|\le 1$, then
\[
   (u + \nu_E)\cdot \nabla \rho \ge 0,
\]
with equality if and only if $u= -\nu_E$.
\end{claim}

The proof of the claim is straightforward vector geometry.

Claim~\ref{claim} implies that in~\eqref{monotonicity-with-exterior-cone-2}, 
we can replace $(\nu_M+\nu_E)\cdot \nabla \rho$ by its absolute value, which gives~\eqref{monotonicity-with-exterior-cone}.
The ``Furthermore'' assertion follows immediately from~\eqref{monotonicity-with-exterior-cone} and Claim~\ref{claim}.
\end{proof}

For the next corollary, we consider the full cone $C_{\Gamma,v}$ over $\Gamma$ with vertex $v$, namely
\[
   \{ v + s(x-v): x\in \Gamma, \, s\ge 0\}.
\]
As above, we make $C_{\Gamma,v}$ into a measure (counting multiplicity) by setting:
\[
    dC_{\Gamma,v} = \theta\,d\Hh^m,
\]
where $\theta(p)$ is the number of points $(x,s)\in \Gamma\times [0,\infty)$ such that
\[
     v + s(x-v) = p.
\]
Since $C_{\Gamma,v}$ is a cone with vertex $v$, the
 ratio $\theta_C:=C_{\Gamma,v}\BB(v,r) / (\omega_mr^m)$ is independent of $r$.

\begin{corollary}\label{new-monotone-corollary}
\begin{align}
(M(b) + E_{\Gamma,v})\psi(\cdot,b) 
&\le \frac{M(a)(\RR^n)}{(4\pi (t_0-a)))^{m/2}} + E_{\Gamma,v}\psi(\cdot,a)
\label{concentrated}
\\
&\le  \frac{M(a)(\RR^n)}{(4\pi (t_0-a)))^{m/2}} + \theta_C.
\label{first-coney}
\end{align}
Furthermore, if the flow is ancient and if $\sup_t M(t)\RR^n< \infty$, then
\begin{equation}\label{second-coney}
(M(b)+E_{\Gamma,v})\psi(\cdot,b) \le \theta_C.
\end{equation}

\end{corollary}

\begin{proof} 
The inequality~\eqref{concentrated} follows from~\eqref{monotonicity-with-exterior-cone}
   since $\psi(\cdot,a)$ attains its maximum value $(4\pi (t_0-a))^{-m/2}$
at the point $v$.
The inequality~\eqref{first-coney} follows since $E_{\Gamma,v}\le C_{\Gamma,v}$ 
 and
since a straightforward calculation shows that
\begin{equation}\label{same}
   C_{\Gamma,v}\psi(\cdot,a) = \theta_C.
\end{equation}
(Alternatively,~\eqref{same} follows immediately from Remark~\ref{weighted-remark} below.)
The  inequality~\eqref{second-coney} follows by letting $a\to -\infty$.
\end{proof}

\section{Entropy}\label{entropy}

In this section, we adapt the concept of entropy in mean curvature flow to mean curvature flow with boundary.

Suppose that $M$ is a properly embedded $m$-dimensional manifold (without boundary) in $\RR^n$.
Recall that the {\bf entropy} of $M$ is
\begin{equation}\label{entropy-definition}
\begin{aligned}
e(M)
&=
\sup_{a\in \RR^n, \, \lambda>0} (4\pi \lambda)^{-m/2} \int_{x\in M} e^{- |x-a|^2/(4\lambda)}\,dx \\
&=
\sup_{\tilde M} \, (4\pi)^{-m/2} \int_{x\in \tilde M} e^{-|x|^2/4}\,dx,
\end{aligned}
\end{equation}
where the second supremum is over all surfaces $\tilde M$ obtained from $M$ by translating and dilating.

More generally, suppose $M$ is a Radon measure on $\RR^n$.  The $m$-dimensional entropy of $M$ is
\begin{equation}
\begin{aligned}
e(M)
&=
\sup_{a\in \RR^n, \, \lambda>0} (4\pi \lambda)^{-m/2} \int e^{- |x-a|^2/(4\lambda)}\,dM \\
&=
\sup_{\tilde M} \int (4\pi)^{-m/2} \int_{x\in \tilde M} e^{-|x|^2/4}\,d\tilde M.
\end{aligned}
\end{equation}
where the supremum is over all Radon measures $\tilde M$ obtained from $M$ by translation and $m$-dimensional
scaling.
(If $M$ is a Radon measure and $\lambda>0$, the $m$-dimensional rescaling of $M$ by $\lambda$ is the measure 
$\lambda_\#M$ obtained by pushing $M$ forward by $x\mapsto \lambda x$ and then multiplying by $\lambda^m$.)

If $t\in I\mapsto M(t)$ is a smooth mean curvature flow of properly immersed $m$-manifolds in $\RR^n$ or, more generally, if it is an $m$-dimensional Brakke flow in $\RR^n$, then, by Huisken's monotonicity formula, the entropy 
  $e(M(t))$ is a decreasing function of $t\in I$.
  
Now suppose that $\Gamma$ is a smooth, properly embedded $(m-1)$-dimensional manifold in $\RR^n$, and suppose that $M$ is an $m$-manifold with boundary $\Gamma$.  
If $v\in \RR^n$, we let $[M,\Gamma,v]$ be the piecewise-smooth manfold obtained by attaching the exterior cone
\[
    \{ v + s(x-v): v\ge 1\}
\]
to $M$.
More generally, if $M$ is any Radon measure on $\RR^n$, we let $[M,\Gamma,v]$ be the Radon measure
\[
  M + E_{\Gamma,v}.
\]
We define
\newcommand{\gauss}{\Theta_\textnormal{gauss}}
\begin{equation}
\gauss(M,\Gamma,v,r)
=
\int  \frac{\exp( - |x-v|^2/(4r^2) )}{(4\pi r^2)^{m/2}}\,d[M,\Gamma,v]x   + \frac12 1_{\Gamma}(x),
\end{equation}
where
\[
1_{\Gamma}(x)
=
\begin{cases}
1 &\text{if $x\in \Gamma$, and} \\
0 &\text{if $x\notin \Gamma$}.
\end{cases}
\]
The term $\frac12 1_{\Gamma}(x)$ is necessary to make $\gauss(M,\Gamma,v,r)$ continuous as a function of $v$.
(To see that $\gauss(M,\Gamma,v,r)$ depends continuously on $v$, note that if $v_i\notin \Gamma$ converges to $v\in \Gamma$,
then, after passing to a subsequence, $E_{\Gamma,v_i}$ converges to $E_{\Gamma,v}$ together with a halfplane bounded by $\Tan(\Gamma,v)$.)

We define the $m$-dimensional entropy $e(M;\Gamma)$ of the pair $(M,\Gamma)$ to be
\begin{equation}\label{entropy-pair-definition}
\begin{aligned}
e(M;\Gamma)
&=
\sup_{a\in \RR^n, r>0} \gauss(M,\Gamma,a,r)  \\
&=
\sup_{(\tilde M, \tilde \Gamma)} \gauss(\tilde M, \tilde \Gamma, 0, 1),
\end{aligned}
\end{equation}
where the second supremum is over all pairs $(\tilde M, \tilde \Gamma)$ obtained from $(M,\Gamma)$ by
translation and dilation.  Note that $e(M;\emptyset)=e(M)$.

\begin{theorem}\label{extended-entropy-monotonicity}
Let $t\in I\mapsto M(t)$ be an $m$-dimensional Brakke flow in $\RR^n$ with boundary $\Gamma$.  Then 
\[
  t\in I \mapsto e(M(t);\Gamma)
\]
is a decreasing function of $t$.
\end{theorem}

\begin{proof}
This is an immediate consequence of the Monotonicity Theorem~\ref{extended-monotonicity}.
\end{proof}

\begin{theorem}
Suppose that $M$ is a shrinker, i.e., that
\[
   t\in (-\infty,0) \mapsto |t|^{1/2}_\# M
\]
is a mean curvature flow.  Then, in the definition~\eqref{entropy-definition} of entropy, the supremum is attained for $a=0$ and $\lambda=1$.
Furthermore, the density of $M$ at every point $p\in M$ is $\le e(M)$, and if equality holds at any point,
 then $M$ is a cone about that point.
\end{theorem}

See for example~\cite{colding-minicozzi-generic}*{lemma~7.01}, or, for the first assertion, the proof of Theorem~\ref{entropy-attained} below.
(In~\cite{colding-minicozzi-generic}*{lemma~7.01}, the shrinker is assumed to have polynomial area growth, but that assumption is not necessary since every shrinker has polynomial 
area growth.  See for example~\cite{brendle-zero}*{Proposition~10}.)

\begin{theorem}\label{entropy-attained}
Suppose that $M$ is an $m$-dimensional shrinker in $\RR^{n}$ whose boundary is an $(m-1)$-dimensional
linear subspace $L$.
Then in the definition~\eqref{entropy-pair-definition} of $e(M;L)$, the supremum is attained for $a=0$ and $r=1$.
Furthermore, the extended density of $\Sigma$ at each point is $\le e(M,L)$, and if equality holds at any point, then
$\Sigma$ is a cone.
\end{theorem}

\begin{proof}
The proof is essentially the same as in the boundaryless case. 
For the reader's convenience, we give the proof of the first assertion: that the supremum in~\eqref{entropy-pair-definition}
is attained for $a=0$ and $r=1$.
Let
\[
M(t) 
=
\begin{cases} 
|t|^{1/2}_\#M  &(t\le 0), \\
0 &(t>0).
\end{cases}
\]
Thus $M(\cdot)$ is a mean curvature flow with boundary $L$.
 Let
 \[
   \Theta = \Theta_\textnormal{gauss}(M,\Gamma,0,1) = \Theta_\textnormal{gauss}(M(-1),\Gamma,0,1)
 \]
 Trivially $e(M;L)\ge \Theta$.  We must show that $e(M,L)\le \Theta$.
By self-similarity, 
\[
  \Theta = \Theta_\textnormal{gauss}(M(-r^2), \Gamma, 0, r)
\]
for all $r>0$.
By monotonicity (Theorem~\ref{extended-monotonicity}),
\[
   \Theta_\textnormal{gauss}(M(t-r^2), \Gamma, a, r)
\]
is an increasing function of $r$ for every spacetime point $(a,t)$.  From polynomial area growth of $M$~\cite{brendle-zero}*{Proposition~10},
one easily checks that
\[
  \lim_{r\to\infty} \Theta_\textnormal{gauss}(M(t-r^2,\Gamma,a,r) = \lim_{r\to\infty}\Theta_\textnormal{gauss}(M(-r^2),\Gamma,0,r) = \Theta.
\]
Thus 
\[
  \Theta \ge  \Theta_\textnormal{gauss}(M(t-r^2,\Gamma,a,r)
\]
for all $r>0$.
Since $t$ is arbitrary, we see that for all $\rho$, $a$, and $r$,
\begin{align*}
\Theta 
&\ge  \Theta_\textnormal{Gauss}(M(-\rho^2, \Gamma,a,r)  \\
&= \Theta( \rho_\#M, \Gamma, a, r) \\
&= \Theta( M, \Gamma, a/\rho, r/\rho).
\end{align*}
Since $a$ and $\rho$ are arbitrary, we see that
\[
  \Theta \ge \sup_{p, R} \Theta(M, \Gamma, p, R) = e(M;L).
\]
\end{proof}

\section{Entropy and Maximal Density Ratio}\label{maximal-density-section}

\newcommand{\mdr}{\operatorname{mdr}}

Entropy is closely related to a more geometrically intuitive notion, namely the maximal density ratio.  In particular, 
for any surface (or Radon measure) the ratio of the two quantities is bounded above and below.

The ($m$-dimensional) {\bf maximal density ratio} $\mdr(M)$ of a surface $M$  
  in $\RR^n$ is defined to be
\[
\mdr(M)
:=
\sup_{x\in \RR^n, \, r>0} \, \frac{\Hh^m(M\cap \BB(x,r))}{\omega_mr^m},
\]
where $\omega_m$ is the volume of the unit ball in $\RR^m$.
Of course $\mdr(M)$ is invariant under rigid motions and scaling.
More generally, if $M$ is a Radon measure on $\RR^n$, the the $m$-dimensional maximal density ratio of $M$
is
\begin{equation}
\mdr(M)
:=
\sup_{a\in \RR^n, \, r>0} \, \frac{M\BB(x,r)}{\omega_mr^m}.
\end{equation}

\begin{theorem}\label{entropy-mdr-theorem}
$\mdr(M) \ge e(M) \ge c_m \mdr(M)$ for some $c_m>0$.
\end{theorem}

\begin{proof}  Let us assume that $M$ is an $m$-manifold; except for notation, the same proof works for Radon measures.  Let $\tilde M$ be a surface obtained from $M$ by translating and scaling.  Let
\[
   V(r) = \Hh^m(\tilde M\cap \BB(0,r))
\]
and $\theta=\mdr(M)$.  Thus $V(r)\le \omega_m\theta r^m$, so
\begin{equation}\label{parts}
\begin{aligned}
\int_{ \tilde M\cap\{|x|\le R\}} e^{-|x|^2/4}\,dx
&=
\int_{r=0}^R  e^{-r^2/4} V'(r)\,dr  \\
&=
 e^{-R^2/4} V(R)
-  \int_{r=0}^R  \frac{d}{dr}\left(e^{-r^2/4}\right) V(r)\,dr  \\
&=
 e^{-R^2/4} V(R) +  \int_{r=0}^R \frac12 r e^{-r^2/4} V(r)\,dr  \\
&\ge
 e^{-R^2/4} V(R).
\end{aligned}
\end{equation}
To prove the lower bound for $e(M)$, let $0<\eta<1$  We may assume by translating and scaling that $V(1)\ge \eta\omega_m \mdr(M)$.
Thus by~\eqref{parts} with $R=1$,
\[
(4\pi)^{m/2} e(M) \ge e^{-1/4}V(1) \ge e^{-1/4}(\eta\,\omega_m) \mdr(M).
\]
Since this holds for all $\eta\in (0,1)$ it also holds for $\eta=1$.  Thus
\[
e(M) \ge (4\pi)^{-m/2}e^{-1/4}\omega_m \mdr(M).
\]

To prove the upper bound for $e(M)$, we may assume that $\theta:=\mdr(M)$ is finite.
Now $V(r)\le \omega_m\theta r^m$, so from~\eqref{parts} we see that
\begin{equation}\label{lionel}
\begin{aligned}
\int_{ \tilde M\cap\{|x|\le R\}} e^{-|x|^2/4}\,dx
&=
e^{-R^2/4} V(R) +  \int_{r=0}^R \frac12 r e^{-r^2/4} V(r)\,dr  \\
&\le
e^{-R^2/4} \omega_m\theta R^m +  \int_{r=0}^R \frac12 r e^{-r^2/4} 
 \omega_m \theta r^m \,dr.
\end{aligned}
\end{equation}
Letting $R\to\infty$ and multiplying by $(4\pi)^{-m/2}$ gives

\begin{equation}\label{integral-is-one}
\begin{aligned}
(4\pi)^{-m/2} \int_{ \tilde M} e^{-|x|^2/4}\,dx
&\le
\theta   \int_{r=0}^R (4\pi)^{-m/2} (\omega_m/2) r^{1+m} e^{-r^2/4}  \,dr   
\\
&=
\theta.
\end{aligned}
\end{equation}

Taking the supremum over all $\tilde M$ gives $e(M)\le\theta=\mdr(M)$.

(Here is one way to see that the definite integral on the right side of~\eqref{integral-is-one} is $1$.  Note that if $\tilde M$ is an $m$-plane through the origin, then $V(r)\equiv \omega_mr^m$, so the inequalities in~\eqref{lionel}
 and in~\eqref{integral-is-one} become
equalities.  Furthermore, in this case, the left side of~\eqref{integral-is-one} is $1$ and $\theta=1$, so
 the definite integral on the right side of~\eqref{integral-is-one} must be equal to $1$.)
\end{proof}

\begin{remark}\label{weighted-remark}
{\rm
The proof shows that if $M$ is a Radon measure on $\RR^n$, then $M\rho$
is a weighted average of the density ratios $(M\BB(0,r))/(\omega_m r^m)$ over $r\in (0,\infty)$.  More generally,
$M\psi_{v,a}$ is a weighted average of $(M\BB(v,r))/(\omega_m r^m)$ over $r\in (0,\infty)$.
}
\end{remark}

Now we turn to manifolds (or varieties) with boundary.
Suppose $\Gamma$ is a smooth $(m-1)$-manifold without boundary in $\RR^n$.
If $M$ is an $m$-dimensional manifold with boundary $\Gamma$, we let
\[
\Theta(M,\Gamma,a,r)
=
\frac{\area((M\cup E_{\Gamma,a})\cap \BB(a,r))}{\omega_m r^m}  + \frac12 1_{\Gamma}(x),
\]
where $E_{\Gamma,v}$ is the exterior cone over $\Gamma$ with vertex $v$
 (as in Section~\ref{euclidean-monotonicity-section}).

More generally, if $M$ is a Radon measure on $\RR^n$, we let
\[
\Theta(M,\Gamma,a,r)
=
\frac{(M + E_{\Gamma,a}) \BB(a,r)}{\omega_m r^m}  + \frac12 1_{\Gamma}(x),
\]
where $E_{\Gamma,a}$ is the Radon measure associated to the exterior cone.

Given a point $a\in \RR^n$, note that $(M + E_{\Gamma,a})\partial \BB(a,r)=0$ for almost all $r$, and that for such $r$,
the function $\Theta(M,\Gamma,\cdot, r)$ is continuous at $a$.

We also let
\[
\mdr(M;\Gamma) = \sup_{a\in \RR^n, \, r>0} \Theta(M,\Gamma,a,r).
\]

\begin{theorem}\label{entropy-mdr-boundary-theorem}
Suppose that $\Gamma$ is a smooth, property embedded $(m-1)$-dimensional submanifold of $\RR^n$ and that
$M$ is a Radon measure on $\RR^n$.  Then
\[
 \mdr(M;\Gamma) \ge e(M;\Gamma)  \ge c_m \mdr(M;\Gamma).
 \]
 \end{theorem}
 
 The proof is essentially identical to the proof of Theorem~\ref{entropy-mdr-theorem}.

\begin{remark}\label{pointed-remark}
{\rm
In the definition~\eqref{entropy-pair-definition} of $e(M;\Gamma)$, if we fix the point~$v$ and take the supremum over $r>0$, we get an entropy-like quantity $e(M;\Gamma)(v)$ that depends on $v$:
\[
  e(M;\Gamma)(v) := \sup_{r>0} \gauss(M,\Gamma,v,r).
\]
Likewise, we can define a fixed-center-point version of the maximal density ratio:
\[
  \mdr(M;\Gamma)(a) := \sup_{r>0}\Theta(M,\Gamma,a,r).
\]
Of course $e(M;\Gamma)=\sup_v e(M;\Gamma)(v)$ and $\mdr(M;\Gamma)=\sup_a \mdr(M;\Gamma)(a)$.
Furthermore, for each $v\in \RR^n$, 
\[
   \mdr(M;\Gamma)(v) \ge e(M;\Gamma)(v) \ge c_m \mdr(M;\Gamma)(v),
\]
and if $t\in I\mapsto M(t)$ is an $m$-dimensional Brakke flow with boundary $\Gamma$, then $e(M(t);\Gamma)(v)$ is a
decreasing function of $t$.   The proofs are identical to the proofs of Theorems~\ref{entropy-mdr-boundary-theorem} 
  and~\ref{extended-entropy-monotonicity}.
  }
\end{remark}

\section{Compactness Theorems}\label{compactness-section}

\begin{theorem}\label{flow-compactness}
Suppose for $i=1,2, \dots$ that 
\[
  t \in [0,T]\mapsto M_i(t)
\]
is an integral Brakke flow in $U$ with boundary $\Gamma_i$.
Suppose also that the $\Gamma_i$ converge in $C^1$ to a smooth, properly embedded $(m-1)$-dimensional
submanifold $\Gamma$ of $U$, and that 
\[
   c_K:= \sup_i \sup_{t\in [0,T]}M_i(t)K <\infty
\]
for each compact subset $K$ of $U$.
Then there is a subsequence $i(j)$ such that for each $t\in [0,T]$, $M_{i(j)}(t)$ converges to a Radon measure $M(t)$.
\end{theorem}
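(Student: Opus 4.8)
The plan is to combine Brakke's almost-monotonicity of mass (Corollary~\ref{measure-increasing-corollary}) with Helly's selection theorem and a diagonal argument. Note that this theorem makes no assertion about whether the limit $M(\cdot)$ is again a Brakke flow; it is only an extraction of subsequential limits, one for each time $t$. First I would fix a countable family $\{\phi_k\}_{k\ge1}$ of nonnegative, compactly supported, $C^2$ functions on $U$ whose finite real-linear combinations are dense in $C_c(U)$ with uniform control of supports: for every $f\in C_c(U)$ and $\eps>0$ there are indices $k_1,\dots,k_n$, reals $a_1,\dots,a_n$, and a compact $K'\subset U$ with $\spt f\subset K'$ and $\spt\phi_{k_j}\subset K'$ for all $j$, such that $\|f-\sum_j a_j\phi_{k_j}\|_{C^0}<\eps$. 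Such a family exists, since $C^2_c(U)$ is dense in $C_c(U)$ (with support control) and every $C^2_c$ function is a difference of two nonnegative $C^2_c$ functions.

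Next, for each fixed $k$, I would apply Corollary~\ref{measure-increasing-corollary} with $u=\phi_k$ on $I=[0,T]$. Because the hypothesis gives $\sup_{t\in[0,T]}M_i(t)(\spt\phi_k)\le c_{\spt\phi_k}<\infty$, it follows (using $c_{\spt\phi_k}$ in place of the possibly smaller constant $\sup_t M_i(t)(\spt\phi_k)$ appearing in the corollary) that
\[
 t\mapsto g^k_i(t):=M_i(t)\phi_k - C_k\,t,\qquad C_k:=(\max|\nabla^2\phi_k|)\,c_{\spt\phi_k},
\]
is non-increasing on $[0,T]$, and $|g^k_i(t)|\le c_{\spt\phi_k}\sup|\phi_k|+C_kT$ for all $i$ and all $t$. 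So for each $k$ the sequence $\{g^k_i\}_i$ is a uniformly bounded family of monotone functions on $[0,T]$. By Helly's selection theorem and a diagonal argument over $k$, I can pass to a single subsequence $i(j)$ along which $\lim_j M_{i(j)}(t)\phi_k$ exists for every $k$ and every $t\in[0,T]$.

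Finally, fix $t\in[0,T]$. For any compact $K$ and any $f\in C_c(U)$ supported in $K$ one has $|M_{i(j)}(t)f|\le c_K\sup|f|$, so the functionals $f\mapsto M_{i(j)}(t)f$ are equibounded on $C_0(K)$; together with convergence of $M_{i(j)}(t)\phi_k$ for all $k$ — hence of $M_{i(j)}(t)g$ for every finite combination $g=\sum_j a_j\phi_{k_j}$ — a routine $\eps/3$ estimate shows that $\{M_{i(j)}(t)f\}_j$ is Cauchy for every $f\in C_c(U)$. The limit functional $f\mapsto\lim_j M_{i(j)}(t)f$ is positive and locally bounded, hence represented by a Radon measure $M(t)$, and $M_{i(j)}(t)\rightharpoonup M(t)$. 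Since the subsequence $i(j)$ was chosen before $t$ was fixed, this holds simultaneously for every $t\in[0,T]$.

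The only genuinely delicate point is arranging convergence at \emph{every} $t\in[0,T]$ along a single subsequence rather than a $t$-dependent one, and this is exactly what the monotone-selection step accomplishes. It relies on the hypothesis that the mass bound $c_K$ is uniform in both $i$ and $t$, which is what makes $t\mapsto M_i(t)\phi_k$ equal to a non-increasing function plus a Lipschitz function with constants independent of $i$.
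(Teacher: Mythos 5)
Your proof is correct and follows essentially the same route as the paper: choose a countable dense family of nonnegative $C^2_c$ test functions, use Corollary~\ref{measure-increasing-corollary} together with the uniform mass bound $c_K$ to make $t\mapsto M_i(t)\phi_k$ equal to a non-increasing function plus a fixed linear function, and then extract a single subsequence via Helly plus a diagonal argument before applying Riesz. You have merely spelled out the Helly/$\eps/3$/Riesz details that the paper leaves implicit.
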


\begin{proof}
Let $\Ff$ be a countable collection of $C^2$, nonnegative, compactly supported functions on $U$ such that
the linear span of $\Ff$ is dense in the space of all continuous, compactly supported functions.
By Corollary~\ref{measure-increasing-corollary}, for each $u\in \Ff$, the function
\begin{equation}\label{the-functions}
   t\in [0,T]\mapsto M(t)u - c_{\spt u} (\max |\nabla^2u|) t
\end{equation}
is non-increasing.  Each such function is also bounded.
Hence by passing to a subsequence, we can assume that each of the functions~\eqref{the-functions}
converges to a limit function.  Theorem~\ref{flow-compactness}
 follows immediately from the Riesz Representation Theorem.
\end{proof}

In the following theorem, we write $H_i(x,t)$ for $H(M_i(t),x)$ and $\nu_i(x,t)$ for $\nu(M_i(t),x)$.

\begin{theorem}\label{main-flow-compactness-theorem}
Suppose
\begin{enumerate}
\item $\Gamma_i$ $(i\in \NN)$ and $\Gamma$ are smooth, $(m-1)$-dimensional submanifolds of $U$,
                   and the $\Gamma_i$ converge smoothly to $\Gamma$.
\item For $i\in \NN$, $t\in [0,T]\mapsto M_i(t)$ is an $m$-dimensional integral Brakke flow with boundary $\Gamma_i$.
\item For each $t$, $M_i(t)$ converges to a Radon measure $M(t)$.
\end{enumerate}
Then $t\mapsto M(t)$ is an integral Brakke Flow with boundary $\Gamma$,
and
\begin{equation}\label{local-area-bound}
   c_K:= \sup_i \sup_{t\in [0,T]}M_i(t)K <\infty.
\end{equation}
Furthermore, there is a continuous, everywhere positive function $\phi:U\to\RR$ with
the following property.
For almost every $t$, there is a subsequence $i(j)$ such that:
\begin{equation}\label{H-squared-ok}
\sup_j\int \phi |H(M_{i(j)}(t),\cdot)|^2\,dM_{i(j)}(t) < \infty,
\end{equation}
\begin{equation}\label{just-H-bound}
    \sup_j\left( \int_K |H_{i(j)}(t,\cdot)|\,dM_{i(j)}(t) + \beta(M_{i(j)}(t))K \right) < \infty \quad\text{for all $K\subset\subset U$},
\end{equation}
\begin{equation}\label{varifolds-ok}
      \Var(M_{i(j)}(t)) \to \Var(M(t)),  
\end{equation}
\begin{equation}\label{h-ok}
\begin{aligned}
 \int H_{i(j)}(x)\cdot X(x,  & \Tan(M_{i(j)},x))  \,dM_i(t) 
 \\
 &\to
 \int H(x)\cdot X(x,\Tan(M,x))\,dM(t), 
\end{aligned}
\end{equation}
\begin{equation}\label{nu-ok}
\begin{aligned}
 \int \nu_{i(j)}(x)\cdot Y(x,&\Tan(\Gamma_{i(j)},x))\,d\Gamma_{i(j)}(x)
 \\
 &\to
 \int \nu(x)\cdot Y(x,\Tan(\Gamma,x))\,d\Gamma(x)
 \end{aligned}
 \end{equation}
 for every $X\in \Xx_m(U)$ and
for every $Y\in \Xx_{m-1}(U)$.
\end{theorem}

\begin{remark}\label{new-h-remark}
{\rm
Even for Brakke flows without boundary, the fact that $X$ in~\eqref{h-ok} can depend
on $x$ and $\Tan(M_{i(j)},x)$ (rather than just on $x$) seems to be new.
}
\end{remark}

\begin{proof}
The finiteness of $c_K$ follows immediately from Theorem~\ref{trace-theorem}.
By Theorem~\ref{H-squared-control}, 
\begin{equation*}\label{d-bound}
   d_K:= \sup_i \int_0^T\int_K |H_i|^2\,dM_i(t)\,dt < \infty.
\end{equation*}
It follows that there is a continuous, everywhere positive function $\phi:U\to \RR$ such that
\begin{equation}\label{cap-C}
  C:= \sup_i \int_0^T \int\phi |H_i|^2\,dM_i(t)\,dt < \infty.
\end{equation}
By Fatou's Lemma,
\[
   \int_0^T \left(\liminf_i \int \phi |H_i|^2\,dM_i(t)\right)\,dt < \infty,
\]
so for almost every $t$, 
\[
    \liminf_i\int \phi |H_i|^2\,dM_i(t) < \infty.
\]
For every such $t$, there is a subsequence $i(j)$ such that
\[
  \sup_j  \int \phi |H_{i(j)}|^2\,dM_{i(j)}(t) < \infty.  
\]
By the Varifold Closure Theorem~\ref{varifold-closure-theorem}, 
$M(t)\in \Vv_m(U,\Gamma)$, and \eqref{just-H-bound}, 
\eqref{varifolds-ok},~\eqref{h-ok}, and~\eqref{nu-ok} hold.

It remains only to show that $t\mapsto M(t)$ is an integral Brakke flow with boundary $\Gamma$.
Let $u: U\times[0,T]\to \RR$ be a nonnegative, compactly supported, $C^2$ function. 
For each $i$,
\begin{align*}
&(M_iu)(a) - (M_iu)(b)
\\
&\quad\ge
\int_a^b\int \left( u|H_i|^2 - H_i\cdot\nabla u - \pdf{u}t \right)\,dM_i(t)\,dt
\\
&\quad =
\int_a^b\int \left( \left|u^{1/2}H_i - \frac12u^{-1/2}\nabla u\right|^2 - \frac14\frac{|\nabla u|^2}{u} - \pdf{u}t \right)\,dM_i(t)\,dt
\end{align*}
Therefore by~\eqref{cap-C},
\begin{align*}
&(M_iu)(a) - (M_iu)(b) + \eps\, C
\\
&\quad\ge
\int_a^b\int\left( \left|u^{1/2}H_i - \frac12u^{-1/2}\nabla u\right|^2 
                           + \eps\phi|H_i|^2 - \frac14\frac{|\nabla u|^2}{u} - \pdf{u}t \right)\,dM_i(t)\,dt.
\end{align*}
Letting $i\to\infty$ gives, by Fatou's Lemma,
\begin{equation}\label{after-fatou}
\begin{aligned}
&(Mu)(a) - (Mu)(b) + \eps\, C
\\
&\quad\ge
\int_a^b\lambda_\eps(t)\,dt
- \int_a^b\int \left( \frac14\frac{|\nabla u|^2}{u} + \pdf{u}t \right)\,dM(t)\,dt,
\end{aligned}
\end{equation}
where
\[
 \lambda_\eps(t) = \liminf_i \int\left( \left| u^{1/2}H_i - \frac12u^{-1/2}\nabla u \right|^2 + \eps\phi|H_i|^2\right)\,dM_i(t).
\]
For each $t$ with $\lambda_\eps(t)<\infty$, there is a subsequence $i(j)$ such that
\[
  \int \left( \left|u^{1/2}H_{i(j)} - \frac12u^{-1/2}\nabla u\right|^2 + \eps\phi|H_{i(j)}|^2 \right)\,M_{i(j)}(t) \to \lambda_\eps(t).
\]
For such $t$, we have (as above), $\Var(M_{i(j)}t))\to \Var(M(t))$ and
\[
 \int H_{i(j)}\cdot X\,dM_{i(j)} \to \int H\cdot X\,dM(t),
\]
for all $X\in \Xx(U)$.
Consequently,
\[
  \int \left(u^{1/2}H_{i(j)} - \frac12u^{-1/2}\nabla u\right)\cdot X\,dM_{i(j)}(t) 
  \to
  \int \left(u^{1/2}H - \frac12u^{-1/2}\nabla u\right)\cdot X\,dM(t).
\]
Since this holds for all $X\in \Xx(U)$,
\begin{align*}
  \int \left| u^{1/2}H- \frac12u^{-1/2}\nabla u \right|^2\,dM(t)
  &\le
  \liminf 
  \int \left| u^{1/2}H_{i(j)}- \frac12u^{-1/2}\nabla u \right|^2\,dM_{i(j)}(t)
  \\
  &\le \lambda_\eps(t)
\end{align*}
Substituting this into~\eqref{after-fatou} and letting $\eps\to 0$ gives
\begin{align*}
(Mu)(a) - (Mu)(b)
&\ge
\int_a^b\ \int \left|u^{1/2}H- \frac12u^{-1/2}\nabla u\right|^2\,dM(t)\,dt
\\
&\qquad - \int_a^b\int \left( \frac14\frac{|\nabla u|^2}{u} + \pdf{u}t \right)\,dM(t)\,dt
\\
&=
\int_a^b \int \left( u|H|^2 - H\cdot \nabla u - \pdf{u}t \right) \,dM(t)\,dt
\end{align*}
\end{proof}

\section{Tangent Flows}\label{tangent-flow-section}
Consider an integral Brakke flow $t\in I\mapsto M(t)$ in $N$ with boundary $\Gamma$.
As in~\S\ref{monotonicity-section}, we isometrically embed $N$ in a Euclidean space $\RR^d$.
We now discuss tangent flows at a spacetime point $(p_0,t_0)$.
By making a spacetime translation, it suffices to consider the case $(p_0,t_0)=(0,0)$.

For $\lambda>0$, let $M^\lambda$ be the result of applying the parabolic dilation
\[
     \Dd_\lambda: (x,t)\mapsto (\lambda x, \lambda^2t)
\]
to the flow
\[
   t\in I\cap (-\infty,0)\mapsto M(t).
\]
Just as for mean curvature flow without boundary,  monotonicity 
together with the compactness and closure theorems in \S\ref{compactness-section} implies existence of tangent flows:
for every sequence $\lambda(i)\to\infty$, there is a subsequence $\lambda(i(j))$ such that
the flows $M^{\lambda(i(j))}(\cdot)$ converge to 
 a flow $M'(\cdot): t\in (-\infty,0]\mapsto M'(t)$.
If $0\notin \Gamma$, it is an integral Brakke flow in the Euclidean space $\Tan(N,0)$.  If $0\in\Gamma$, it is an integral Brakke flow in $\Tan(N,0)$
with boundary $\Tan(\Gamma,0)$.  In either case, the tangent flow $M'(\cdot)$ is self-similar: it is 
invariant under parabolic dilations $\Dd_\lambda$ with
$\lambda>0$.


\begin{definition}
We say that an integral Brakke flow $t\in I\mapsto M(t)$ with boundary $\Gamma$
is {\bf unit-regular} provided the following holds:
\begin{quote}
For each $p\in N$ and $t\in I$, if one of the tangent flows at $(p,t)$
is a multiplicity-$1$ plane or halfplane, 
then the flow $M(\cdot)$ is fully smooth in a spacetime 
neighborhood of $(p,t)$.  
\end{quote}
Equivalently,
\begin{quote}
For each $p\in N$ and $t\in I$, 
if the extended Gauss density $\Theta_e(M(\cdot),(p,t))$ is $1$, 
 then the flow $M(\cdot)$ is fully smooth in a spacetime 
neighborhood of $(p,t)$.
\end{quote}
\end{definition}

(See Definition~\ref{gauss-density-definition} for extended Gauss density.)

Here ``fully smooth" means ``smooth and with no sudden vanishing".

Unit-regularity does not prevent sudden vanishing; it just implies that such vanishing can only occur where the multiplicity is greater than one.  For example, consider the Brakke flow that consists of a nonmoving plane with multiplicity $k$ for $t\le 0$ and that vanishes at time $0$.  The flow is not unit-regular if $k=1$, but it is (vacuously) 
 unit-regular if $k>1$. 

\begin{remark}
{\rm
If $p\notin\Gamma$ and if\, $\Theta_e(M,(p,t))=1$,
then $(p,t)$ is a backwardly $C^{1,\alpha}$-regular point of the flow
 by Brakke's Regularity Theorem~\cite{brakke} if the ambient space
is Euclidean or by the Kasai-Tonegawa~\cite{kasai-tonegawa} generalization of that theorem
 for general ambient manifolds, 
and consequently is a backwardly $C^\infty$-regular point by~\cite{tonegawa-higher}.   
Presumably the analogous theorems are true for $(p,t)$ with $p\in \Gamma$ 
and with $\Theta_e(M(\cdot),(p,t))=1$.
If so, then every integral Brakke flow with boundary would have the backward smoothness
 (but not necessarily the full smoothness)
described in the definition of unit-regularity.
However, none of those facts are required for this paper;  here, the simpler
local regularity theorems in~\cite{white-local} suffice.
}
\end{remark}

\section{Mod $2$ Flat Chains}

\newcommand{\Lmrec}{\mathcal{L}_\textnormal{$m$-rec}}

Let $\Lmrec(U,\ZZ^+)$  denote the space of functions on $U$ that take values in the nonnegative integers,
 that are locally $L^1$ with respect to Hausdorff $m$-dimensional measure on $U$, and that vanish except on a countable union of m-dimensional $C^1$-submanifolds of $U$. We identify functions that agree except on a set of Hausdorff
  $m$-dimensional measure zero.
  Let $\Lmrec(U, \ZZ_2)$ be the corresponding space with the nonnegative integers $\ZZ^+$
   replaced by $\ZZ_2$, the integers mod $2$.
The space $I\MM_m(U)$ (defined in \S\ref{notation-section})
    is naturally isomorphic to $\Lmrec(U, \ZZ^+)$: given any $M\in I\MM_m(U)$, 
    the corresponding function in $\Lmrec(U, \ZZ^+)$ is the density function  $\Theta(M,\cdot)$ given by
\[
      \Theta(M,x) = \lim_{r\to 0}\frac{M\BB(x,r)}{\omega_mr^m},
\]
where $\omega_m$ is the volume of the unit ball in $\RR^m$. 
In particular, this limit exists and is a nonnegative integer for $\Hh^m$-almost every $x\in U$.
Similarly, the space of $m$-dimensional 
rectifiable mod $2$ flat chains\footnote{As in~\cite{simon-gmt} and in~\cite{white-duke}, we do not require 
flat chains to have compact support.  In Federer's terminology~\cite{federer-book}, they would be called
``locally flat chains".  See the discussion in~\cite{white-duke}*{\S2.1}.}
 in $U$ is naturally isomorphic to $\Lmrec(U, \ZZ_2)$: 
given any such flat chain $A$, the corresponding function is the density function $\Theta(A, \cdot)$ given by
\[
   \Theta(A, x) := \lim_{r\to 0} \frac{\mu_A\BB(x, r)}{\omega_mr^m}
\] 
where $\mu_A$ is the Radon measure on $U$ determined by $A$. 
In particular, this limit exists and is $0$ or $1$ for $\Hh^m$-almost every $x \in U$.

The surjective homomorphism
\begin{align*}
&[\cdot]: \ZZ^+ \to \ZZ_2, \\
&k \mapsto  [k]
\end{align*}
determines a homomorphism from $\Lmrec(U, \ZZ^+)$ to $\Lmrec(U, \ZZ_2)$
 and thus also a homomorphism from the additive semigroup $I\MM_m(U)$
  to the additive group of $m$-dimensional rectifiable mod $2$ flat chains in $U$. 
If $M\in I\MM_m(U)$, we let $[M]$ denote the corresponding rectifiable mod $2$ flat chain. 
Thus $[M]$ is the unique rectifiable mod $2$ flat chain in $U$ such that
\[
   \Theta([M],x) = [\Theta(M,x)]
\]
for $\Hh^m$-almost every $x\in U$.

The following is Theorem 3.3 in~\cite{white-duke}: 

\begin{theorem}\label{duke-journal-theorem}
Suppose that $M_i$ $(i=1,2,\dots)$ and $M$ are Radon measures in $I\MM_m(U)$ with the following 
properties:
\begin{enumerate}
\item $\Var(M_i)\rightharpoonup \Var(M)$.
\item Each $M_i$ has bounded first variation, and
\[
   q_K := \sup_i \left( \int_K |H(M_i,\cdot)|\,dM_i + \beta(M_i)(K) \right) < \infty.
\]
\item The $\partial [M_i]$ converge (in the flat topology) to a mod $2$ flat chain $C$.
\end{enumerate}
Then the $[M_i]$ converge (in the flat topology) to $[M]$.  In particular, $\partial [M_i]=C$.
\end{theorem}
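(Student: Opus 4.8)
The plan is to extract a flat limit of the $[M_i]$ by a compactness theorem for mod~$2$ flat chains, and then to identify that limit with $[M]$ by a slicing induction on the dimension $m$.

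First I would collect the uniform bounds. Weak convergence $\Var(M_i)\rightharpoonup\Var(M)$ gives $c_K:=\sup_iM_i(K)<\infty$ for each $K\subset\subset U$, and, together with hypothesis~(2) and Allard's compactness theorem for integral varifolds, it gives that $\Var(M)$ is an integral varifold of locally bounded first variation; in particular $M\in I\MM_m(U)$, so $[M]$ makes sense. Since $\Theta([M_i],x)=[\Theta(M_i,x)]\le\Theta(M_i,x)$, we have $\mu_{[M_i]}\le M_i$, hence $\sup_i\mathbf{M}_K([M_i])\le c_K$. For the boundaries I would use the local estimate of~\cite{white-duke} (established there before Theorem~3.3) that for an integral varifold $V$ of locally bounded first variation the flat chain $\partial[\mu_V]$ is representable by integration, with $\mathbf{M}_{\BB(x,r)}(\partial[\mu_V])$ bounded in terms of $r^{-1}\mu_V(\BB(x,2r))$ and $\|\delta V\|(\BB(x,2r))$; covering $K$ by finitely many balls and feeding in $c_K$ and $q_K$ yields $\sup_i\mathbf{M}_K(\partial[M_i])<\infty$. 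Now the compactness theorem for mod~$2$ flat chains applies: after passing to a subsequence, $[M_i]\to A$ in the local flat topology, where $A$ is a rectifiable mod~$2$ flat chain of locally finite mass and locally finite boundary mass, and, by continuity of $\partial$ together with hypothesis~(3), $\partial A=\lim_i\partial[M_i]=C$.

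The crux is to prove $A=[M]$. Pure measure theory is not enough — one only obtains $\mu_A\le M$, which forgets the parity that distinguishes $[M]$ from, say, $0$ when $M$ has even density — so I would induct on $m$ and slice. For $m=0$ the hypotheses reduce to weak-$*$ convergence $M_i\rightharpoonup M$ of integer atomic measures; in a fixed compact set $M$ has finitely many atoms, and around each one picks a tiny ball in which, for $i$ large, $M_i$ and $M$ have equal mass, hence $[M_i]$ and $[M]$ equal parity, so the surplus points of $[M_i]$ can be paired up and joined by short segments, giving $\mathbf{F}_K([M_i]-[M])\to0$. For the inductive step, fix a linear (or distance) function $f$ and slice by its level sets. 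Slicing is linear and commutes with reduction mod~$2$, so $\langle[M_i],f,r\rangle$ is the reduction of the integral varifold slice $M_{i,r}$; by the coarea inequality for the flat norm, after a further subsequence $\langle[M_i],f,r\rangle\to\langle A,f,r\rangle$ for a.e.\ $r$. Granting that for a.e.\ $r$ the sliced data $(M_{i,r},M_r,\langle C,f,r\rangle)$ again satisfy hypotheses~(1)--(3) in dimension $m-1$, the inductive hypothesis gives $[M_{i,r}]\to[M_r]$, so $\langle A,f,r\rangle=[M_r]=\langle[M],f,r\rangle$ for a.e.\ $r$. Running this over a countable family of functions rich enough to determine a flat chain from its slices, and diagonalizing, yields $A=[M]$.

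Because $[M]$ does not depend on the chosen subsequence, the entire sequence $[M_i]$ converges to $[M]$ in the local flat topology; applying $\partial$ then gives $\partial[M]=C$, the last assertion. The step I expect to be hardest is the hypothesis-checking inside the induction: that $\Var(M_{i,r})\rightharpoonup\Var(M_r)$, that the first-variation bound survives, and that $\langle\partial[M_i],f,r\rangle\to\langle C,f,r\rangle$, all simultaneously for a.e.\ $r$ along one subsequence. Varifold convergence of slices is the delicate point and needs a slicing lemma for varifolds of bounded first variation together with a diagonal argument to reconcile the $r$- and $f$-dependent subsequences; the boundary-mass estimate imported from~\cite{white-duke} is the other indispensable ingredient.
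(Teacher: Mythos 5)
The paper itself does not prove this result --- it is quoted verbatim from \cite{white-duke}*{Theorem~3.3} --- so your outline has to be judged on its own merits. Your compactness step is sound: $\mu_{[M_i]}\le M_i$ gives the local mass bound, the local boundary-mass estimate for $\partial[M_i]$ you invoke is indeed established in \cite{white-duke}, and flat compactness for mod~$2$ chains then produces a subsequential limit $A$ with $\partial A=C$.

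The gap is in the slicing induction you propose for identifying $A=[M]$, and it is not merely the ``hardest step'' --- it is an obstruction that kills the argument as written. Hypothesis~(2) does not survive slicing: for a.e.\ $r$, the slice $\langle V,f,r\rangle$ of an integral varifold $V$ of locally bounded first variation is a rectifiable $(m-1)$-varifold, but its first variation involves the full second fundamental form of $V$ together with the geometry of the level set $\{f=r\}$, not merely $H(V,\cdot)$ and $\beta(V)$. The coarea formula converts the $dr$-integral of the slices' total curvature into an integral over $V$ of a \emph{non-trace} component of the second fundamental form, which is not bounded by $\int|H|\,d\mu_V+\|\delta V\|$; already for a single smooth minimal $V$ there is no a~priori bound on $\|\delta\langle V,f,r\rangle\|$, and along a sequence $V_i\rightharpoonup V$ nothing forces $\sup_i\|\delta\langle V_i,f,r\rangle\|(K)$ to be finite for a.e.\ $r$, even on a subsequence. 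So the uniform $q_K$ needed to re-apply the theorem at the sliced level is simply unavailable. Varifold (not just measure) convergence of the slices, your hypothesis~(1), is suspect for the same underlying reason: the Grassmannian component of the varifold does not slice in a controlled way. Only the flat-chain quantities (mass and boundary mass of $\langle[M_i],f,r\rangle$) obey a coarea inequality, so a slicing strategy would have to be carried out entirely on the chain side and propagate the density-parity information by some other device --- at which point the inductive statement is no longer this theorem. The proof in \cite{white-duke} does not pass through a dimensional induction on sliced varifolds; it identifies the flat limit with $[M]$ by a direct argument built on the boundary-mass estimate and the density structure of the limit varifold.
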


\section{Standard Brakke Flows and The Closure Theorem}\label{standard-section}

Suppose that $(M_i(\cdot),\Gamma_i)$ ($i\in \NN$) and $(M(\cdot),\Gamma)$
are integral Brakke flows with boundary in $U$ defined on a time interval $I$.
We say that the $(M_i(\cdot),\Gamma_i)$ {\bf converge} to $(M(\cdot),\Gamma)$
if $M_i(t)\rightharpoonup M(t)$ for each $t\in I$ and the $\Gamma_i$ converge smoothly to $\Gamma$.

\begin{theorem}[Closure Theorem]\label{closure-theorem}
Suppose $(M_i(\cdot),\Gamma_i)$ converges to $(M(\cdot),\Gamma)$,
where each $M_i:t\in[ 0,T]\to M_i(t)$ is an integral Brakke flow with boundary $\Gamma_i$.
\begin{enumerate}
\item\label{unit-regular-preserved} If the flows $M_i(\cdot)$ are unit-regular, then so is the flow $M(\cdot)$.
\item\label{mod-2-preserved} If $\partial [M_i(t)] = [\Gamma_i(t)]$ for almost every $t$, then
                 $\partial [M(t)] = [\Gamma(t)]$ for almost every $t$.
\end{enumerate}
\end{theorem}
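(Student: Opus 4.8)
I would prove Assertion~\eqref{unit-regular-preserved} using upper semicontinuity of the Gaussian density together with the local regularity theorems of~\cite{white-local}, and Assertion~\eqref{mod-2-preserved} by applying Theorem~\ref{duke-journal-theorem} at almost every time.

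\emph{Assertion~\eqref{unit-regular-preserved}.} Suppose a tangent flow of $M(\cdot)$ at $(p,t)$ is a multiplicity-one plane or halfplane; equivalently, $\Theta(M(\cdot),(p,t))=1$ if $p\notin\Gamma$ and $\Theta(M(\cdot),(p,t))=1/2$ if $p\in\Gamma$. The plan is to transfer regularity from the $M_i(\cdot)$ rather than apply a local regularity theorem to $M(\cdot)$ directly, since the latter would give only smoothness and not the absence of sudden vanishing. Huisken monotonicity (Theorem~\ref{monotonicity-theorem}) makes $\Theta(M(\cdot),\cdot)$ an upper semicontinuous function on spacetime and makes the Gaussian density ratio of $M(\cdot)$ at a point almost monotone in scale; for a fixed scale, that ratio is continuous under the convergence $M_i(\cdot)\to M(\cdot)$, the only delicate point being the boundary term in~\eqref{monotonicity-inequality}, which converges by~\eqref{nu-ok} because $\Gamma_i\to\Gamma$ smoothly. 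Combining these facts, I would produce a parabolic neighbourhood $\mathcal{P}$ of $(p,t)$ and a scale $r_0>0$ such that, for all large $i$, every Gaussian density ratio of $M_i(\cdot)$ based at a point of $\mathcal{P}$ at a scale $\le r_0$ is below $1+\eps_0$ in the interior case, or below $1/2+\eps_0$ in the boundary case, where $\eps_0>0$ is the threshold in the relevant local regularity theorem of~\cite{white-local} (its interior version if $p\notin\Gamma$, its boundary version if $p\in\Gamma$). That theorem then says that on a slightly smaller neighbourhood of $(p,t)$ each $M_i(\cdot)$ is either empty or a smooth, multiplicity-one flow, with estimates uniform in $i$. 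The empty alternative cannot hold for infinitely many $i$, for then $M(\cdot)$ would be empty near $(p,t)$ and hence $\Theta(M(\cdot),(p,t))$ would be $0$; so for all large $i$ the $M_i(\cdot)$ are uniformly smooth near $(p,t)$, which forces $C^\infty$ convergence $M_i(\cdot)\to M(\cdot)$ there, and therefore $M(\cdot)$ is itself a smooth, multiplicity-one flow on a full spacetime neighbourhood of $(p,t)$ — in particular it does not vanish suddenly, so $(p,t)$ is a point of full smoothness.

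\emph{Assertion~\eqref{mod-2-preserved}.} The plan is to apply Theorem~\ref{duke-journal-theorem} at almost every fixed time $t$. For each $i$, $\partial[M_i(t)]=[\Gamma_i(t)]$ holds for $t$ outside a set $Z_i$ of measure zero, hence for all $i$ at once outside $Z_1:=\bigcup_iZ_i$. By Theorem~\ref{main-flow-compactness-theorem}, there is a further measure-zero set $Z_2$ of times such that, for $t\notin Z_2$, some subsequence $i(j)$ has $\Var(M_{i(j)}(t))\to\Var(M(t))$ (by~\eqref{varifolds-ok}), $\sup_j\int\phi\,|H(M_{i(j)}(t),\cdot)|^2\,dM_{i(j)}(t)<\infty$ (by~\eqref{H-squared-ok}), and $M(t)\in\Vv_m(U,\Gamma)$. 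Fix $t\notin Z_1\cup Z_2$ and work along such an $i(j)$. Hypothesis~(1) of Theorem~\ref{duke-journal-theorem} is the varifold convergence; hypothesis~(2) follows by applying the Cauchy--Schwarz inequality to the uniform local $L^2$-bound on $H(M_{i(j)}(t),\cdot)$, together with $\beta(M_{i(j)}(t))(K)\le\Hh^{m-1}(\Gamma_{i(j)}\cap K)$, which is bounded because $\Gamma_i\to\Gamma$ in $C^1$; hypothesis~(3) holds with $C=[\Gamma(t)]$, because $\partial[M_{i(j)}(t)]=[\Gamma_{i(j)}(t)]$ and $[\Gamma_{i(j)}(t)]\to[\Gamma(t)]$ in the flat topology by the smooth convergence of the $\Gamma_i$. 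Theorem~\ref{duke-journal-theorem} then gives $\partial[M(t)]=[\Gamma(t)]$; since $t\notin Z_1\cup Z_2$ was arbitrary, this holds for almost every $t$.

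The step I expect to be the main obstacle is within Assertion~\eqref{unit-regular-preserved}: extracting from the weak convergence $M_i(\cdot)\to M(\cdot)$ a \emph{single} parabolic neighbourhood and scale on which the small-density hypothesis of the local regularity theorem holds simultaneously for all large $i$. This needs the upper semicontinuity of the Gaussian density — including its boundary contribution, where the boundary term of Theorem~\ref{monotonicity-theorem} and the convergence~\eqref{nu-ok} must be used with care — to be combined correctly with the almost-monotonicity in scale. Once that is set up, the ``empty-or-smooth'' dichotomy and the ``no sudden vanishing'' conclusion follow immediately.
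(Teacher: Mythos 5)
Your treatment of Assertion~\eqref{mod-2-preserved} is essentially the paper's proof: extract, for almost every $t$, a subsequence along which Theorem~\ref{main-flow-compactness-theorem} gives varifold convergence and a uniform local $L^2$-bound on $H$, bound $\beta(M_{i(j)})(K)$ by $\Hh^{m-1}(\Gamma_{i(j)}\cap K)$ using the smooth convergence $\Gamma_i\to\Gamma$, and then invoke Theorem~\ref{duke-journal-theorem}. Your extra bookkeeping of the null sets $Z_1,Z_2$ is sound.

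For Assertion~\eqref{unit-regular-preserved} you take a genuinely different route: the paper simply cites \cite{schulze-white-triple}*{Theorem~4.2} (remarking that the argument carries over to the boundary case), whereas you reprove the result directly from Huisken monotonicity, upper semicontinuity of Gaussian density, and the local regularity theorems of \cite{white-local}. The outline is correct and is essentially what the cited reference does, but one step is stated too strongly and, as written, hides where the hypothesis that the $M_i(\cdot)$ are unit-regular is actually used. You write that the local regularity theorem of \cite{white-local} ``says that on a slightly smaller neighbourhood of $(p,t)$ each $M_i(\cdot)$ is either empty or a smooth, multiplicity-one flow.'' That theorem does not rule out sudden vanishing: it establishes $C^{2,\alpha}$ (hence smooth) graphical structure only where the flow has positive density, and an integral Brakke flow with small Gaussian density ratios in a parabolic cylinder can still be a flat disk up to some time and the zero measure thereafter, without violating the density hypothesis. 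To obtain the parametric dichotomy you need, you must combine the small-density conclusion with the unit-regularity of the $M_i$: at each point of $\spt M_i$ in the cylinder the Gaussian density must equal exactly $1$ (or $1/2$ at $\Gamma_i$), so unit-regularity of $M_i$ says the flow is \emph{fully} smooth there and hence does not vanish; only then do you get a uniform family of smooth, non-vanishing graphs, $C^\infty$ convergence, and a fully smooth limit $M(\cdot)$. A second, minor imprecision: you place the delicacy of the boundary term in the continuity of the fixed-scale density ratio under $M_i\to M$, but that continuity is plain weak-$*$ convergence; the boundary term enters in controlling the error in the almost-monotonicity of $\Theta_r(M_i,\cdot)$ as $r$ decreases below $r_0$, and there the uniform bound on $C_{\Gamma_i}$ coming from $\Gamma_i\to\Gamma$ in $C^1$ is what you actually need.
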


\begin{proof}
See~\cite{schulze-white-triple}*{Theorem~4.2} for the proof of
 Assertion~\eqref{unit-regular-preserved}.  (The proof in~\cite{schulze-white-triple}
is for Brakke flows without boundary, but the same proof works for flows with boundary: the proof is based
on the local regularity theorems in~\cite{white-local}, which are stated and proved both with and without boundary.)

We now prove Assertion~\eqref{mod-2-preserved}.  
By Theorem~\ref{main-flow-compactness-theorem}, for almost every $t$, there is a subsequence $M_{i(j)}(t)$
such that
\begin{equation}\label{varifolds-ok-again}
      \Var(M_{i(j)}(t)) \to \Var(M(t))
\end{equation}
and
\begin{equation}\label{just-H-bound-again}
    \sup_j
    \left( \int_K |H_{i(j)}(t,\cdot)|\,dM_{i(j)}(t) + \beta(M_{i(j)}(t))K \right) < \infty 
    \quad\text{for all $K\subset\subset U$}.
\end{equation}

Since the $\Gamma_i$ converge smoothly to $\Gamma$, it follows that the $[\Gamma_i] (=\partial [M_i])$
converge to $[\Gamma]$ as mod $2$ flat chains.
By Theorem~\ref{duke-journal-theorem}, $[M_{i(j)}(t)] \to [M(t)]$ and $\partial [M(t)]=[\Gamma]$.
\end{proof}

\begin{definition}\label{standard-definition}
An integral Brakke flow $M(\cdot)$ with boundary $\Gamma$ is called {\bf standard}
if it has properties described in Theorem~\ref{closure-theorem}:
\begin{enumerate}
\item the flow is unit-regular, and
\item $\partial [M(t)]=[\Gamma]$ for almost every $t$.
\end{enumerate}
\end{definition}

\section{Existence}\label{existence-section}

\begin{theorem}\label{existence-theorem}
 Let $N$ be a smooth Riemannian manifold.  
If $N$ has nonempty boundary, we assume that the boundary is smooth and $m$-convex.
Let $\Gamma$ be smooth, properly embedded  $(m-1)$-dimensional manifold in $N$.
Let $M_0$ be a smoothly embedded $m$-dimensional manifold in $N$ with boundary $\Gamma$
and with finite area,
or, more generally, let $M_0$ be an $m$-rectifiable set of finite $m$-dimensional
measure such that $\partial [M_0]=[\Gamma]$.
Then there exists a standard Brakke flow 
\[
  t\in [0,\infty) \mapsto M(t)
\]
with boundary $\Gamma$ such that 
\[
    M(t) \rightharpoonup \Hh^m\llcorner M_0
\]
as $t\to 0$.

If $p\notin\Gamma$ and if $M_0$ is smooth in a neighborhood of $p$, then the flow is smooth
in a spacetime neighborhood of $(p,0)$.   If  $p\in \Gamma$ and if $M_0$ is $C^{1,\alpha}$ in a neighborhood
of $p$, then the flow is parabolically $C^1$ in a spacetime neighborhood of $(p,0)$.
\end{theorem}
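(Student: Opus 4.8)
The plan is to construct the flow by Ilmanen's method of elliptic regularization, following \cite{ilmanen-elliptic} but keeping track of the boundary. The idea is to produce, for each $\eps > 0$, a translating-type soliton in $N \times \RR$ (with a small parameter), and then to take $\eps \to 0$ to extract a Brakke flow with boundary in $N$.

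\begin{proof}[Proof sketch]
We use elliptic regularization. Isometrically embed $N$ in a Euclidean space if convenient; work in $N \times \RR$ with coordinates $(x, s)$. For each $\eps > 0$, consider the weighted area functional
\[
  \Sigma_\eps(\Sigma) = \int_\Sigma e^{-s/\eps} \, d\Hh^{m}
\]
on $m$-dimensional surfaces $\Sigma \subset N \times \RR$ with $\partial[\Sigma] = [\Gamma \times \RR]$, asymptotic as $s \to +\infty$ to $M_0 \times \RR$ and (because of the weight) collapsing to the boundary as $s \to -\infty$. A minimizer $\Sigma_\eps$ exists by the compactness theory for mod $2$ flat chains and the $m$-convexity of $\partial N$, which supplies the barrier keeping mass from escaping through $\partial N$. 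The Euler--Lagrange equation says $\Sigma_\eps$ has mean curvature $-\eps^{-1}(\partial/\partial s)^\perp$, i.e.\ the translating soliton equation; equivalently, $t \mapsto \Sigma_\eps - (0, t/\eps)$, reparametrized, is (the support of) a Brakke flow in $N \times \RR$ with boundary $\Gamma \times \RR$. Projecting down: for $M_\eps(t) := $ the slice of $\Sigma_\eps$ by $\{s = t/\eps\}$, pushed to $N$, one obtains an integral Brakke flow with boundary $\Gamma$, defined for $t \in [0,\infty)$, with $M_\eps(0) \rightharpoonup \Hh^m \llcorner M_0$ as $\eps \to 0$. (Here one verifies Brakke's inequality by slicing the stationary weighted varifold $\Sigma_\eps$; this is exactly Ilmanen's computation, and the boundary term $\nu \cdot \nabla u\, d\Gamma$ comes along because $\partial N$ contributes nothing, $\Gamma$ being the only boundary.)

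Next, uniform bounds. By the $m$-convexity of $\partial N$ and Theorem~\ref{trace-theorem} applied on small balls (using a partition of unity / covering of $\spt(\Hh^m \llcorner M_0)$ by balls on which $\dist(\cdot,x)^2$ is smooth and the trace hypothesis holds), we get $\sup_\eps \sup_{t \in [0,T]} M_\eps(t)(K) < \infty$ for each compact $K$ and each $T$; this uses the finiteness of the initial area and the fact that the boundary contribution $t R\, \Hh^{m-1}(\Gamma \cap \BB(x,R))$ is controlled. Therefore, by the compactness Theorem~\ref{flow-compactness} we can pass to a subsequence so that $M_\eps(t) \rightharpoonup M(t)$ for every $t$, and by Theorem~\ref{main-flow-compactness-theorem} the limit $t \mapsto M(t)$ is an integral Brakke flow with boundary $\Gamma$.

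It remains to see that $M(\cdot)$ is \emph{standard}, i.e.\ unit-regular and $\partial[M(t)] = [\Gamma]$ for a.e.\ $t$. For the mod $2$ boundary condition: each $M_\eps(\cdot)$ satisfies $\partial[M_\eps(t)] = [\Gamma]$ for a.e.\ $t$ (this is built into the soliton construction, since $\Sigma_\eps$ has boundary $\Gamma \times \RR$ and slicing commutes with $\partial$ for a.e.\ slice), so Assertion~\eqref{mod-2-preserved} of the Closure Theorem~\ref{closure-theorem} gives $\partial[M(t)] = [\Gamma]$ for a.e.\ $t$. For unit-regularity: the flows coming from elliptic regularization are unit-regular --- this is the standard fact (for flows without boundary, and the same argument works with boundary, or one cites \cite{schulze-white-triple}) that a flow arising as a limit of such soliton slices has the required local structure --- so Assertion~\eqref{unit-regular-preserved} of the Closure Theorem gives that $M(\cdot)$ is unit-regular. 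Hence $M(\cdot)$ is standard. Finally, the local regularity near $(p,0)$: if $M_0$ is smooth near $p \notin \Gamma$, then on a small spacetime neighborhood the Gaussian density ratios are close to $1$, so by the local regularity theorem in \cite{white-local} the flow is smooth there; if $p \in \Gamma$ and $M_0$ is $C^{1,\alpha}$ near $p$, the half-space version of the local regularity theorem in \cite{white-local} gives parabolic $C^1$ regularity in a spacetime neighborhood of $(p,0)$.

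The main obstacle is the construction and analysis of the minimizer $\Sigma_\eps$ with the prescribed boundary $\Gamma$ and the correct asymptotics --- in particular showing that it really is noncompact with the right behavior at $s = \pm\infty$, that it stays away from $\partial N$ by the $m$-convexity barrier, and that the slices give a genuine Brakke flow with boundary $\Gamma$ (and not some extra boundary appearing in the slicing). This is where Ilmanen's argument must be adapted carefully to the boundary setting; the compactness and closure steps afterward are then routine given the earlier sections.
\end{proof}
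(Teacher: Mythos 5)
Your overall strategy---elliptic regularization following Ilmanen, then compactness, the Closure Theorem, and local regularity---is the same as the paper's, but there is a real gap in the step that passes from the soliton $\Sigma_\eps$ in $N\times\RR$ to a flow in $N$. You define $M_\eps(t)$ to be the slice of $\Sigma_\eps$ by $\{s=t/\eps\}$ pushed to $N$, and assert that for each fixed $\eps$ the family $t\mapsto M_\eps(t)$ is already an integral Brakke flow with boundary $\Gamma$. That is not correct: level-set slices of a translating soliton do not satisfy Brakke's inequality. (For example, slicing the bowl translator in $\RR^{m+2}$ by horizontal hyperplanes produces round $m$-spheres in $\RR^{m+1}$ whose radii \emph{increase} with the slicing height, whereas a Brakke flow of spheres must shrink.) Consequently, the later steps---the uniform mass bound via Theorem~\ref{trace-theorem}, the compactness Theorems~\ref{flow-compactness} and~\ref{main-flow-compactness-theorem}, and the Closure Theorem~\ref{closure-theorem}---are all being applied to objects that have not been shown to be Brakke flows, and the argument does not close.

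The paper instead stays upstairs until the very end. It sets $M^\lambda(t)$ to be the translated measure $M^\lambda-\lambda t(0,1)$ \emph{restricted} to the open manifold $\tilde N=N\times(0,\infty)$; since $\tilde N$ is open, the cut at $\{s=0\}$ lies outside $\tilde N$, so the translated soliton is an honest $(m+1)$-dimensional standard Brakke flow in $\tilde N$ with boundary $\tilde\Gamma=\Gamma\times(0,\infty)$ (standardness from a.e.\ smoothness of the minimizer for orthogonality of $H$, Allard's interior and boundary regularity for unit-regularity, and the mod~$2$ boundary condition by construction). The compactness and closure theorems are applied in $\tilde N$, and Ilmanen's argument (\cite{ilmanen-elliptic}*{8.9}) is invoked only to identify the limit $\MM(t)$ as the cylinder $M(t)\times(0,\infty)$, from which one descends to the $m$-dimensional flow $M(\cdot)$ in $N$. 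Two smaller corrections: the competitors $\Sigma$ should be $(m+1)$-dimensional, not $m$-dimensional (so that their boundary and their horizontal slices are $m$-dimensional); and the initial data $M_0$ enters through the prescribed boundary at $\{s=0\}$---the paper minimizes subject to $\partial\Sigma^\lambda=[(M_0\times\{0\})\cup(\Gamma\times[0,\infty))]$---not through asymptotics as $s\to+\infty$.
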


Recall that $m$-convexity of $\partial N$ at a point $p$ is the condition that the sum of the smallest $m$ principal curvatures of $\partial N$
at $p$ with respect to the inward unit normal is nonnegative.  (Thus $1$-convexity is convexity, and if $\dim N=m+1$, then $m$-convexity is mean-convexity.)  
We say that $\partial N$ is strictly $m$-convex at $p\in \partial N$ if the sum of the smallest $m$-principal curvatures at $p$ is 
 greater than $0$.

\begin{proof}
Let us first prove the theorem assuming that $\partial N$ is strictly $m$-convex.

Let $\Gamma^* = (M_0\times \{0\})\cup (\Gamma\times[0,\infty)$.
Let $\lambda>0$.
Consider a mod $2$ flat chain $\Sigma^\lambda$ in $N\times\RR$ that minimizes
\[
     \int_{(x,z)\in N\times\RR} e^{-\lambda z}d\mu_{\Sigma^\lambda} (x,z)
\]
subject to $\partial \Sigma^\lambda = [\Gamma^*]$.
Let $M^\lambda$ be the associated Radon measure; thus $\Sigma^\lambda=[M^\lambda]$.

Note that $\Sigma^\lambda$ is mass-minimizing with respect to the Ilmanen metric $e^{-2\lambda z/m}(g+dz^2)$ 
(where $g$ is the metric on $N$.)
Thus $\spt(M^\lambda)\setminus \Gamma^*$ is smooth (with multiplicity $1$) away from a closed set
of Hausdorff dimension $\le m-1$ \cite{federer-short}.  

(Here is where strict $m$-convexity of $\partial N$ is used.  Note that strict $m$-convexity of $\partial N$ implies strict $(m+1)$-convexity
of $(\partial N)\times \RR$ with respect to the Ilmanen metric.  By the maximum principle in~\cite{white-max}, $\Sigma^\lambda$ cannot
touch $(\partial N)\times\RR$ at any interior point of $\Sigma^\lambda$.)

For $t\ge 0$, let $M^\lambda(t)$ be the portion of $M^\lambda- \lambda t(0,1)$ in $\tilde N=N\times(0,\infty)$.
Then
\[
    t\in [0,\infty) \mapsto M^\lambda(t)
\]
is an integral Brakke flow with boundary $\tilde \Gamma:=\Gamma\times (0,\infty)$ in $\tilde N$.
In fact, it is standard:
\begin{enumerate}
\item Because $M^\lambda$ is smooth almost everywhere, the mean curvature vector is orthogonal
to the surface almost everywhere.
\item Unit regularity follows from Allard's Regularity Theorem~\cite{allard-first-variation}*{\S8} 
 and Boundary Regularity Theorem~\cite{allard-boundary}*{\S4}
   applied to $M^\lambda$.
\item The mod $2$ boundary condition holds by construction.
\end{enumerate}

By~\cite{ilmanen-elliptic}*{5.1, 3.2(ii)}, the areas of the $\MM^\lambda(t)$ have a uniform upper bound on area
as $\lambda\to\infty$:
\[
   \area(\MM^\lambda_0 \cap \{a<z<a+b\}) \le (b + \lambda^{-1})\area(M_0)
\]
for any $a,b>0$, and thus
\[
   \area(\MM^\lambda(t)\llcorner\{0<z<b\}) \le (b+\lambda^{-1})\area(M_0).
\]

Consequently
 (by Theorems~\ref{flow-compactness}, \ref{main-flow-compactness-theorem}, and~\ref{closure-theorem}),
  the flows $t\in [0,\infty)\mapsto \MM^\lambda(t)$ converge 
as $\lambda\to\infty$ (after passing to a subsequence)
to a standard Brakke flow $\MM(\cdot)$ in $\tilde N$ with boundary $\tilde\Gamma$.

Furthermore, as in~\cite{ilmanen-elliptic},
\[
    \MM(0) = M_0\times (0,\infty)
\]
and
\[
   \MM(t) = M(t)\times (0,\infty)
\]
(except possibly for countably many $t$), where $M(t)$
is a Radon measure in $N$.

Since $\MM(\cdot)$ is a standard Brakke flow with boundary $\Gamma^*$ in $N^*$,
it follows that $t\in [0,\infty)\mapsto M(t)$ is a standard Brakke flow in $N$ with boundary $\Gamma$
and with
\[
    M(0)=M_0.
\]
(See~\cite{ilmanen-elliptic}*{8.9}.)

If $M_0$ is $C^{1,\alpha}$ in a neighborhood of a point $p$, then the flow is parabolically $C^1$
in a spacetime neighborhood of $(p,0)$ by~\cite{white-local}.
If $M_0$ is smooth in a neighborhood of a point $p\in N\setminus \Gamma$, 
then the flow is smooth in a spacetime neighborhood of $(p,0)$ by~\cite{schulze-white-triple}*{Corollary~A.3}.

This completes the proof assuming strict $m$-convexity.  For the general case, let $g_i$ be a sequence of smooth
metrics on $N$ such that the $g_i$ converge smoothly to the original metric and such that $\partial N$ is strictly $m$-convex
with respect to each $g_i$.  Then we get a suitable Brakke flow for each metric $g_i$.  
By Theorems~\ref{flow-compactness}, \ref{main-flow-compactness-theorem}, and~\ref{closure-theorem},
 a subsequence of those Brakke flows will converge to a suitable Brakke flow for the metric $g$.

(Theorems~\ref{flow-compactness}, \ref{main-flow-compactness-theorem}, and~\ref{closure-theorem} are
 stated for a fixed metric, but they hold, with the same proofs, for sequences of metrics.)
\end{proof}

\section{Self-Similar Flows}

\begin{theorem}[Shrinker Theorem]\label{shrinker-theorem}
Let $\Gamma$ be an $(m-1)$-dimensional linear subspace of $\RR^{m+1}$.
Suppose that 
\[
  t\in (-\infty,0)\mapsto S(t)
\]
is an $m$-dimensional integral Brakke flow in $\RR^{m+1}\setminus\Gamma$ that
is self-similar (i.e, invariant under parabolic dilations $\Dd_\lambda:(x,t)\mapsto (\lambda x, \lambda^2t)$ with $\lambda>0$).
Let $M:=S(-1)$, and suppose that $\spt(M)\setminus\Gamma$ is 
disjoint from some $m$-dimensional halfplane $P$ with boundary $\Gamma$.
Then $M$ is a sum of half-planes (each with boundary $\Gamma$) with multiplicities.
\end{theorem}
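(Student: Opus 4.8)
The plan is to recognise $M$ as a self-shrinker, to observe that the halfplanes through $\Gamma$ foliate $\RR^{m+1}\setminus\Gamma$ by hypersurfaces that are minimal for the associated Gaussian metric, and then to run a strong-maximum-principle ``peeling'' argument. Since $S(\cdot)$ is invariant under every parabolic dilation $\Dd_\lambda$ with $\lambda>0$, we have $S(t)=\sqrt{-t}\,M$ for $t<0$, so $M$ is a self-shrinker: in the varifold sense $H=\tfrac12\,x^{\perp}$ on $M$, which after a conformal change of metric is equivalent to $M$ being a stationary integral varifold in $\RR^{m+1}\setminus\Gamma$ for the smooth metric $g=e^{-|x|^2/(2m)}\delta$. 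Write $\RR^{m+1}=\Gamma\oplus\RR^2$ and use polar coordinates $(r,\theta)$ on the $\RR^2$ factor. Each halfplane $P_\theta=\{(y,r\cos\theta,r\sin\theta):y\in\Gamma,\ r\ge 0\}$ lies in a linear subspace through the origin and has $\nabla|x|^2$ tangent to it, so its second fundamental form for $g$ vanishes; thus $P_\theta$ is $g$-minimal, and $\{P_\theta\setminus\Gamma\}_{\theta\in\RR/2\pi\ZZ}$ is a smooth foliation of $\RR^{m+1}\setminus\Gamma$ by such leaves. After a rotation we may assume the excluded halfplane is $P_0$, so the hypothesis says $\spt M$ lies in the slit region $\bigcup_{0<\theta<2\pi}(P_\theta\setminus\Gamma)$, on which the leaf parameter $\theta$ is a well-defined smooth function valued in $(0,2\pi)$. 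Finally, since $S(\cdot)$ is self-similar, Huisken monotonicity (Theorem~\ref{monotonicity-theorem}) shows the Gaussian density is finite at every point, so $M$ has bounded density ratios and $\int e^{-|x|^2/4}\,d\mu_M<\infty$.

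The claim is that $\spt M$ is a union of leaves. We may assume $M\ne 0$ (otherwise it is the empty sum of halfplanes). Put $\theta_\ast=\sup_{\spt M}\theta$, and suppose for the moment that $\theta_\ast<2\pi$ and that it is attained at some $p_\ast\in\spt M\setminus\Gamma$. Then $\spt M$ lies weakly on one side of the $g$-minimal hypersurface $P_{\theta_\ast}$ and touches it at $p_\ast$, so by the strong maximum principle for stationary integral varifolds (Ilmanen) $\spt M$ coincides with $P_{\theta_\ast}$ near $p_\ast$. The set of points of the connected manifold $P_{\theta_\ast}\setminus\Gamma$ near which $\spt M$ contains a relatively open piece of $P_{\theta_\ast}$ is open, and closed (applying the strong maximum principle again at limit points), hence is all of $P_{\theta_\ast}\setminus\Gamma$. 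By the Constancy Theorem the multiplicity of $M$ along this leaf is a constant positive integer $n_\ast$, so $M':=M-n_\ast\,(P_{\theta_\ast}\setminus\Gamma)$ is again a nonnegative stationary integral varifold supported in the slit region, with $\int e^{-|x|^2/4}\,d\mu_{M'}$ smaller by at least the fixed positive constant $\int_{P_0}e^{-|x|^2/4}\,d\Hh^m$. Iterating, the procedure stops after finitely many steps and exhibits $M=\sum_i n_i\,(P_{\theta_i}\setminus\Gamma)$, a sum of halfplanes with boundary $\Gamma$ and positive integer multiplicities. (The same argument applied to $\inf_{\spt M}\theta$ treats the lower end of the range symmetrically.)

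It remains to justify that $\theta_\ast<2\pi$ and that it is attained away from $\Gamma$. Accumulation of $\spt M$ onto $P_0\setminus\Gamma$, or attainment of the supremum, at a finite point away from $\Gamma$ is exactly the strong maximum principle used above. The only remaining possibility is that $\theta$ tends to $0$ or $2\pi$ along $\spt M$ solely as one approaches $\Gamma$ or spatial infinity; here one uses the bounded density from the first paragraph. At a point $q\in\Gamma\cap\overline{\spt M}$, blowing up produces $m$-dimensional stationary cones whose supports still avoid the halfplane with boundary $\Tan(\Gamma,q)$ that is the tangent cone of $P_0$; by Federer dimension reduction (bottoming out when $\Gamma$ has degenerated to a point and the link is a finite set of points in a circle) such a cone is a union of halfplanes with boundary $\Tan(\Gamma,q)$, so near $\Gamma$ the support $\spt M$ stays Hausdorff--close to a finite union of halfplanes and $\theta$ cannot run to $0$ or $2\pi$ there; the blow-down of $M$ at infinity is handled in the same way, and combining the end estimates with compactness of $\spt M\cap\{\,r\ge\epsilon,\ |x|\le R\,\}$ shows the global supremum is $<2\pi$ and is attained at an interior point.

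The main obstacle is this last step: controlling $\spt M$ at the two non-compact ends --- along $\Gamma$ and at infinity --- so that the sweep-out has an honest interior first-contact point, the analysis of the tangent cones along $\Gamma$ (and the attendant dimension-reduction/stitching) being the genuinely delicate input. Once it is in place, together with the strong maximum principle for stationary integral varifolds at a priori singular touching points, everything else is routine varifold bookkeeping given the shrinker reformulation and the halfplane foliation.
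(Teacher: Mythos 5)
Your plan is genuinely different from the paper's: you pass to the elliptic (self-shrinker/Ilmanen-metric) reformulation and sweep the slit region by the $g$-minimal foliation of halfplanes, whereas the paper stays in the parabolic picture and constructs an explicit time-dependent barrier. The parts of your argument that are fully spelled out --- the shrinker reformulation, the $g$-minimality of the leaves $P_\theta$, the strong maximum principle for stationary integral varifolds plus open-and-closed plus the Constancy Theorem to extract a full leaf with integer multiplicity, and the finite-Gaussian-mass termination of the peeling --- are all correct and are reasonable substitutes for the corresponding steps in the paper.

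The gap is in the step you yourself flag as the ``main obstacle,'' and it is not merely delicate but genuinely unresolved as written. A $\sup/\inf$ sweep needs an honest first-contact point in $\spt M\setminus\Gamma$; nothing you write guarantees that $\theta_\ast=\sup_{\spt M}\theta$ is attained, as opposed to being approached only along sequences escaping to $\Gamma$ or to infinity. The sketched remedy --- blow up at points of $\Gamma\cap\overline{\spt M}$, classify the resulting stationary cones avoiding a halfplane by Federer dimension reduction, and conclude that $\theta$ stays bounded away from $0$ and $2\pi$ near $\Gamma$ (and similarly at infinity via a blow-down) --- is circular at its base (the classification of such boundary cones is a version of the theorem you are proving) and, more importantly, the inference from ``tangent cones along $\Gamma$ are unions of halfplanes'' to ``$\theta$ is uniformly bounded away from the endpoints in a neighborhood of $\Gamma$'' is asserted, not proved; tangent-cone information at each boundary point does not by itself give a uniform angular gap. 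This is exactly the difficulty the paper's proof is designed to avoid: rather than chasing a supremum, it shows that the set $\Pp$ of halfplanes disjoint from $\spt M\setminus\Gamma$ is \emph{open} in the circle of halfplanes, by flowing an odd-in-$x_1$ parabolic barrier from $t=-1$, using the parabolic Hopf lemma to get a strictly positive normal derivative $c>0$ at the edge, and then rescaling to $t\to 0^-$. Since $\Pp$ is then a nonempty proper open subset of a connected circle, it has a boundary halfplane, and such a halfplane automatically touches $\spt M$ at a point off $\Gamma$ --- no attainment argument is needed. The extra input that makes this work is the \emph{parabolic} structure (existence of $S(t)$ for all $t<0$ and the self-similar relation $S(t)=\sqrt{|t|}\,M$), which your purely elliptic sweep discards. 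To repair your proof you would either need to supply a genuine Hopf-type boundary estimate at $\Gamma$ and at infinity for the stationary varifold $M$ (uniform, not just pointwise on tangent cones), or switch to an openness argument as the paper does.
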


\begin{proof}
Let $\Pp$ be the set of halfplanes with boundary $\Gamma$ 
that are disjoint from $(\spt M)\setminus \Gamma$.
We claim that $\Pp$ is open.  To see this, suppose $P\in \Pp$.
By rotating, we can assume that $P$ is the halfplane $\{x_2=0,\, x_1\ge 0\}$.
Let $M^+$ and $S^+(t)$ be the portions of $M$ and $S(t)$ in the region
 $\{x_2\ge 0, \, x_1>0\}$.

Let $f:\BB^m(0,1)\cap \{ x_1>0\}\to \RR$ be a smooth, compactly supported, nonnegative
function that is $>0$ at some points. 
By multiplying by a small positive constant, we can assume that the graph of $f$ lies below $M^+$.
 Extend $f$ to $\BB^m(0,1)$ so that it is odd in $x_1$:
\[
    f(-x_1,x_2,\dots,x_m) =  - f(x_1,x_2,\dots,x_m).
\]
Now let 
\[
    u: \BB^m(0,1)\times [-1,\infty) \to \RR
\]
be the solution of the nonparametric MCF equation with 
\begin{align*}
&u(\cdot,-1)=f, \\
&u(\cdot,t)|\partial \BB^m\equiv 0.
\end{align*}
By the boundary maximum principle, $c:=\pdf{}{x_1}u(0,0)>0$.
Let $G(t,\eps)$ be the graph of $u(\cdot,t)-\eps$.
Let $\eps>0$.  By the maximum principle (Theorem~\ref{maximum-principle}),
$\spt(S^+(t))$ lies above $G(t,\eps)$ for all $t\in [-1,0]$.
Hence $\spt(S^+(t))$ lies in the closed region above $G(t)$ 
for $t\in [-1,0)$.
Equivalently, $\spt(M)$ lies in the closed region above $G^*(t):=|t|^{-1/2}G(t)$ for all $t\in [-1,0)$.  At $t\to 0$, $G^*(t)$ converges
to the plane $\{x: x_2=cx_1\}$.  Thus we see that the halfplane $P_\lambda:=\{x: x_2= \lambda x_1, \,x_1>0\}$
is in $\Pp$ for all $\lambda \in [0,c)$.  
Likewise there is a $c'<0$ such that the halfplane  $P_\lambda$ is disjoint from $M$ for all $\lambda\in (c',0]$.
This completes the proof of openess of $\Pp$.

Now let $P$ be a halfplane in the boundary of $\Pp$.  Then $\spt(M)$ touches $P$, so, by the strong maximum
principle, $M$ contains $P$.  (Note that $P$ and the varifold associated to $M$ are both stationary with respect to the shrinker metric,
so $M$ contains $P$ by the strong maximum principle in~\cite{solomon-white}.)  Now repeat the process with $S(t)$ replaced by
\[
    t \in (-\infty,0)\mapsto S(t) - (\Hh^m\llcorner P).
\]
The process must stop in finitely many steps, since otherwise $M$ would contain infinitely many
halfplanes and thus would not have locally finite area in $\RR^{m+1}\setminus \Gamma$.
\end{proof}

\begin{definition}\label{wedge-definition}
Consider two distinct $m$-dimensional linear subspaces $P$ and $P'$ of $\RR^{m+1}$.
The closure of a  component of $\RR^{m+1}\setminus (P\cup P')$ is called a {\bf wedge},
and $P\cap P'$ is the {\bf edge} of the wedge.
\end{definition}

\begin{corollary}\label{shrinker-no-boundary-corollary}
Suppose $t\in (-\infty,0)\mapsto M(t)$ is a nontrivial $m$-dimensional self-similar, integral Brakke flow (without boundary) in
$\RR^{m+1}$.  Then $\spt M(-1)$ is not contained in any wedge.
\end{corollary}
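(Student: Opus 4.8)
The plan is a proof by contradiction, reducing to the Shrinker Theorem~\ref{shrinker-theorem}. Suppose $\spt M(-1)$ were contained in a wedge $W$ with edge $\Gamma = P\cap P'$. Since $P\ne P'$ are $m$-dimensional linear subspaces of $\RR^{m+1}$, their intersection $\Gamma$ is an $(m-1)$-dimensional linear subspace. Regard $M(\cdot)$ as a flow in the open set $\RR^{m+1}\setminus\Gamma$ (a Brakke flow in a manifold restricts to a Brakke flow in any open submanifold, and it is still boundaryless there), so it is a self-similar integral Brakke flow in $\RR^{m+1}\setminus\Gamma$; it is nontrivial, since $M(-1)\ne 0$ and $M(-1)$ cannot be supported in the $\Hh^m$-null set $\Gamma$. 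Working in the $2$-plane $\Gamma^\perp$, the hyperplanes $P$ and $P'$ appear as two distinct lines through the origin, so $W\cap\Gamma^\perp$ is one of the four closed sectors they cut out, and each of these has opening angle strictly less than $\pi$. Hence some ray of $\Gamma^\perp$ lies outside $W\cap\Gamma^\perp$, and the corresponding $m$-dimensional halfplane $P_0$ with $\partial P_0=\Gamma$ meets $W$ only in $\Gamma$; in particular $\spt M(-1)\setminus\Gamma$ is disjoint from $P_0\setminus\Gamma$, which is exactly the hypothesis of Theorem~\ref{shrinker-theorem}.

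By Theorem~\ref{shrinker-theorem}, $M(-1) = \sum_k n_k\,\Hh^m\llcorner H_k$, where the $H_k$ are distinct halfplanes with $\partial H_k=\Gamma$ and the $n_k$ are positive integers; this holds as measures in $\RR^{m+1}\setminus\Gamma$, and hence in $\RR^{m+1}$ since $\Gamma$ is $\Hh^m$-null, and the sum is nonempty because the flow is nontrivial. Each $H_k\subseteq W$, so, writing $H_k$ in $\Gamma^\perp$ as the ray through a unit vector $v_k$, all the $v_k$ lie in the closed sector $W\cap\Gamma^\perp$. Since the $H_k$ are flat, the interior mean curvature of $M(-1)$ vanishes and the tangential divergence theorem gives $\int\Div_{M(-1)}X\,dM(-1) = -\int_\Gamma X\cdot\bigl(\sum_k n_k v_k\bigr)\,d\Hh^{m-1}$ for every compactly supported $C^1$ vectorfield $X$; comparing with~\eqref{divergence-formula}, the first variation of $M(-1)$ has no part absolutely continuous with respect to $M(-1)$ and $\beta(M(-1)) = \bigl|\sum_k n_k v_k\bigr|\,\Hh^{m-1}\llcorner\Gamma$.

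The crux is that $\sum_k n_k v_k\ne 0$: because the sector $W\cap\Gamma^\perp$ has angle $<\pi$, every $v_k$ has strictly positive inner product with its bisector direction $w$, so $\bigl(\sum_k n_k v_k\bigr)\cdot w = \sum_k n_k\,(v_k\cdot w) > 0$. Hence $\beta(M(-1))\ne 0$, i.e.\ $M(-1)\notin\Vv_m(\RR^{m+1},\emptyset)$. But this property is invariant under the parabolic dilations under which the flow is invariant, so every time-slice $M(t)$ with $t<0$ also fails to lie in $\Vv_m(\RR^{m+1},\emptyset)$, contradicting the requirement in the definition of a Brakke flow without boundary that $M(t)\in\Vv_m(\RR^{m+1},\emptyset)$ for almost every $t$. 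I expect the only mildly delicate points to be the elementary geometry of the sector (that a wedge always leaves room for a disjoint halfplane, and that it is contained in an open half-space of $\Gamma^\perp$) and the bookkeeping of conormal signs in the divergence computation; everything else is immediate from Theorem~\ref{shrinker-theorem} and the definitions.
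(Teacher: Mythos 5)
Your proof is correct and follows essentially the same route as the paper's one-line proof: apply Theorem~\ref{shrinker-theorem} (which is evidently what the paper intends, despite citing Theorem~\ref{wedge-theorem}) to write $M(-1)$ as a sum of half-planes in the wedge, and observe that this is incompatible with the flow being boundaryless. You have usefully spelled out the details the paper dispatches with ``which is impossible,'' in particular the elementary geometric fact that conormals of half-planes all lying in a closed sector of opening angle $<\pi$ cannot sum to zero, so the boundary measure $\beta(M(-1))$ is nonzero.
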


\begin{proof}
If $\spt M(-1)$ were contained in such a wedge $W$, then by
 Theorem~\ref{shrinker-theorem}
it would be a union of half-planes in $W$, which is impossible.
\end{proof}

\begin{corollary}\label{static-shrinker-corollary}
Let $V$ be a stationary integral $m$-varifold in $\RR^{m+1}\setminus \Gamma$ that is invariant
under positive dilations about $0$.  Suppose there is an open halfplane with boundary $\Gamma$
that is disjoint from the support of $V$.  Then $V$ is a sum of halfplanes with multiplicities.
\end{corollary}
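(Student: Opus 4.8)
The plan is to deduce this from the Shrinker Theorem \ref{shrinker-theorem} by viewing the stationary varifold $V$ as a (trivially) self-similar Brakke flow. First I would set $S(t) := \Hh^m\llcorner(\spt V)$ with the varifold structure of $V$ for all $t \in (-\infty,0)$ — that is, the static flow that equals $V$ at every time. I would check that this constitutes an $m$-dimensional integral Brakke flow in $\RR^{m+1}\setminus\Gamma$: since $V$ is stationary, its mean curvature $H$ vanishes identically, so the defining inequality in Definition \ref{brakke-flow-definition} reduces to $(Mu)(a)-(Mu)(b) \ge \int_a^b\int(-\partial u/\partial t)\,dM\,dt$, which is an equality here because $M(t)$ is time-independent and $\int_a^b(-\partial u/\partial t)\,dt = u(\cdot,a)-u(\cdot,b)$ integrates to $(Mu)(a)-(Mu)(b)$. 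The finiteness condition \eqref{finiteness-item} holds because $V$ has locally finite mass and $H\equiv0$. Orthogonality of $H$ to $\Tan(M,\cdot)$ is vacuous. So $S(\cdot)$ is an integral Brakke flow in $\RR^{m+1}\setminus\Gamma$, with no boundary inside that open set (the condition $\beta(M)\le\Hh^{m-1}\llcorner\Gamma$ with $\Gamma$ removed from the ambient space forces $\beta(M)=0$, which is exactly stationarity of $V$ in $\RR^{m+1}\setminus\Gamma$).

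Next I would verify self-similarity. The hypothesis says $V$ is invariant under positive dilations $x\mapsto\lambda x$ about $0$. Under the parabolic dilation $\Dd_\lambda:(x,t)\mapsto(\lambda x,\lambda^2 t)$, the static flow $S(\cdot)$ is carried to the static flow whose time-$t$ slice is the pushforward of $V$ under $x\mapsto\lambda x$, which equals $V$ by hypothesis; hence $S(\cdot)$ is invariant under $\Dd_\lambda$ for every $\lambda>0$, i.e. it is self-similar in the sense required by Theorem \ref{shrinker-theorem}. Now set $M := S(-1) = \Hh^m\llcorner(\spt V)$ (with multiplicities), which is just $V$ as a varifold. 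By hypothesis there is an open $m$-dimensional halfplane $P$ with boundary $\Gamma$ disjoint from $\spt V = \spt M$, so the remaining hypothesis of Theorem \ref{shrinker-theorem} — that $\spt(M)\setminus\Gamma$ is disjoint from some halfplane $P$ with boundary $\Gamma$ — is satisfied. Theorem \ref{shrinker-theorem} then yields that $M = V$ is a sum of halfplanes with boundary $\Gamma$, each taken with a multiplicity, which is the assertion.

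The only genuine point to be careful about is the reduction to the hypotheses of Theorem \ref{shrinker-theorem}: one must confirm that $\Gamma$ being an $(m-1)$-dimensional \emph{linear} subspace of $\RR^{m+1}$ (as required there) matches the setup here — which it does, since the corollary also takes $\Gamma$ linear — and that ``sum of halfplanes with multiplicities'' is preserved in passing back from the conclusion about $M$ to the conclusion about $V$; but $V = M$ as varifolds, so there is nothing to do. I expect no substantial obstacle: the entire content of the corollary is the observation that a stationary varifold is a (static) Brakke flow, so that the shrinker classification applies verbatim. I would present this in two or three sentences in the final writeup.
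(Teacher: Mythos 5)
Your proposal is correct and follows exactly the paper's own approach: the paper simply states that this is the special case of Theorem~\ref{shrinker-theorem} in which the shrinker is a static minimal cone, and your write-up merely fills in the straightforward verification that the static flow $S(t)\equiv V$ is an integral Brakke flow in $\RR^{m+1}\setminus\Gamma$ that is invariant under parabolic dilations.
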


This is the special case of Theorem~\ref{shrinker-theorem} when the shrinker is a minimal cone.

\begin{remark}
{\rm
Theorem~\ref{shrinker-theorem}  and Corollary~\ref{shrinker-no-boundary-corollary} remain true (with the same proofs) if the
hypothesis that $M(\cdot)$ is an integral Brakke flow is replaced
 by the hypothesis that $M(\cdot)$ is a Brakke flow such that the density of $M(-1)$ is $\ge a>0$ 
 almost everywhere (with respect to $M(-1)$).
Likewise Corollary~\ref{static-shrinker-corollary} remains true for any stationary varifold $V$
such that $\Theta(V,\cdot)\ge a > 0$ holds $\mu_V$ almost everywhere.
}
\end{remark}


\section{The Wedge Theorem and Boundary Regularity}

\begin{theorem}[Wedge Theorem]\label{wedge-theorem}
Suppose $W$ is a wedge (see Definition~\ref{wedge-definition}) in $\RR^{m+1}$ with edge $\Gamma$.
Suppose 
\[
   t\in (-\infty,0)\mapsto S(t)
\]
 is a self-similar, standard Brakke flow in $W$ with boundary $\Gamma$.
Then $S(\cdot)$ is a non-moving halfplane with multiplicity $1$.
\end{theorem}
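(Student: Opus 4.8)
The plan is to reduce, via the Shrinker Theorem, to a sum of halfplanes, and then to rule out everything except a single multiplicity-one halfplane using the two numerical conditions encoded in ``$\Vv_m(W,\Gamma)$'' and in ``standard''.

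First I would set up the hypothesis of Theorem~\ref{shrinker-theorem}. Since $W$ is the closure of one component of $\RR^{m+1}\setminus(P\cup P')$ with $P\ne P'$, the opposite component meets $W$ only along $\Gamma$, so there is an open $m$-dimensional halfplane with boundary $\Gamma$ that is disjoint from $W$, hence from $\spt S(-1)$. Regarding $S(\cdot)$ as an integral Brakke flow in the open set $\RR^{m+1}\setminus\Gamma$ (legitimate, since the boundary measure of $S(t)$ is carried by $\Gamma$), Theorem~\ref{shrinker-theorem} applies and yields $M:=S(-1)=\sum_{j=1}^{k}m_j[H_j]$, a finite sum of distinct halfplanes $H_j$ with common boundary $\Gamma$ and positive integer multiplicities $m_j$, with all $H_j\subset W$. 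Because $M$ is dilation-invariant and the flow is self-similar, $S(t)=M$ for every $t<0$, so it remains only to show $k=1$ and $m_1=1$.

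Now write $H_j=\Gamma+\RR_{\ge0}v_j$ with $v_j$ a unit vector in the $2$-plane $Q:=\Gamma^{\perp}$; the $v_j$ are distinct and all lie in the sector $W\cap Q$, whose opening angle is strictly less than $\pi$ (the lines $P\cap Q$ and $P'\cap Q$ are distinct). Since $S(t)=M\in\Vv_m(W,\Gamma)$ for a.e.\ $t$, Definition~\ref{Vm-definition}(1) together with Remark~\ref{nu-equivalence-remark} gives $|\nu(M,\cdot)|\le1$ on $\Gamma$; computing the first variation of a union of flat halfplanes identifies $\nu(M,x)$ with $-\sum_j m_j v_j$, so $\bigl|\sum_j m_j v_j\bigr|\le1$. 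Since $S(\cdot)$ is standard, $\partial[M]=[\Gamma]$; as $[M]=\sum_j[m_j][H_j]$ and $\partial[H_j]=[\Gamma]$ for each $j$, this forces $n:=\sum_j m_j$ to be odd. The proof then concludes with the elementary two-dimensional claim: \emph{if $v_1,\dots,v_k$ are distinct unit vectors in a planar sector of opening angle $<\pi$ and $m_1,\dots,m_k$ are positive integers with $n=\sum_j m_j$ odd and $\bigl|\sum_j m_j v_j\bigr|\le1$, then $n=1$.} Granting this, $M=[H_1]$ is a single multiplicity-one halfplane and $S(\cdot)$ is the corresponding non-moving halfplane.

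The main work — and the step I expect to be delicate — is that last claim. When the opening angle is $<2\pi/3$ it is immediate: projecting $\sum_j m_j v_j$ onto the bisector gives $\bigl|\sum_j m_j v_j\bigr|\ge n\cos(\tfrac{\text{angle}}{2})>1$ as soon as $n\ge2$. For opening angles close to $\pi$ this crude bound fails (and likewise bounding pairwise inner products below by the cosine of the opening angle fails), so one must exploit the parity of $n$ directly: placing coordinates so that the sector is $\{(\cos\beta,\sin\beta):|\beta|\le\gamma\}$ with $\gamma<\pi/2$, I would show that $\bigl|\sum_j m_j v_j\bigr|^{2}$ is minimized, subject to the constraints, by configurations in which the mass clusters near the two edges $\beta=\pm\gamma$ in as balanced a way as possible, and that when $n$ is odd this forced imbalance — together with $\gamma<\pi/2$ — keeps the minimum strictly above $1$ unless $n=1$. (Note that unit-regularity, the other half of ``standard'', is not actually needed for this argument; only the mod~$2$ boundary condition and membership in $\Vv_m$ are used.)
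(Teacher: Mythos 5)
Your proposal follows the paper's proof essentially verbatim: apply the Shrinker Theorem to decompose $S(-1)$ into a sum of halfplanes with common boundary $\Gamma$, use the mod-$2$ boundary condition to make the total multiplicity $k$ odd, use $|\nu(M,\cdot)|\le 1$ from Definition~\ref{Vm-definition}, and close with a two-dimensional estimate on sums of unit vectors in a sector of half-angle $\theta<\pi/2$. The estimate you leave as an outline is carried out explicitly in the paper via the sharp bound $\bigl|\sum_{i=1}^k\nu_i\bigr|\ge\sqrt{1+(k^2-1)\cos^2\theta}$ for odd $k$ (attained at the maximally balanced corner configuration $k=2j+1$, with $j$ of the angles equal to $\theta$ and $j+1$ equal to $-\theta$), which is strictly greater than $1$ unless $k=1$.
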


\begin{proof}
Let $M=S(-1)$.
By Theorem~\ref{shrinker-theorem}, $M=\sum_{i=1}^k P_i$
where each $P_i$ is a multiplicity-one halfplane  with boundary $\Gamma$.
(The $P_i$ need not be distinct, since a halfplane in the support of $M$ is allowed to have any positive integer multiplicity.)
Since $\partial [M]=[\Gamma]$, $k$ is odd.

For each $i$, let $\nu_i$ be the unit vector in the plane of $P_i$ that is normal to $\Gamma$
and that points out from $P_i$.  For any smooth, compactly supported vectorfield $X$,
\begin{equation*}
\int \Div_{P_i}X\,dP_i = \int_\Gamma X\cdot \nu_i\,d\Hh^{m-1},
\end{equation*}
so
\begin{align*}
\int \Div_MX\,dM
&=
\int_\Gamma \left(\sum_{i=1}^k \nu_i\right) \cdot X \, d\Hh^{m-1}.
\end{align*}
Thus
\[
  \nu(M,\cdot) = \sum_{i=1}^k \nu_k.
\]
By definition of mean curvature flow with boundary, $|\nu(M,\cdot)|\le 1$.
Now we use the following elementary fact:
if $\nu'=\sum_{i=1}^k\nu_i$ where the $\nu_i=(\cos\theta_i,\sin\theta_i)$ are unit vectors 
with $|\theta_i|\le \theta < \pi/2$, then
\begin{equation}\label{trig}
  |\nu'|
  \ge
  \begin{cases}
  k\cos\theta &\text{if $k$ is even}, \\
  \sqrt{1 + (k^2-1)\cos^2\theta} &\text{if $k$ is odd}.
  \end{cases}
\end{equation}

(The inequalities~\eqref{trig} can be proved as follows.
Given $k$ and $\theta$, it is easy to show that at the mimimum of $|\nu'|$, each $\theta_i$ is $\pm\theta$.
If $k$ is even, the minimum is attained by having half of the $\theta_i$ equal to $\theta$ and the other
half equal to $-\theta$.  If $k=2j+1$ is odd, the minimum is attained when $j$ of the $\theta_i$ are equal
to $\theta$ and $j+1$ are equal to $-\theta$.)

In our case, the number of planes (counting multiplicity) in $M'$ is odd, so 
\[
  1 \ge |\nu'|\ge \sqrt{1 + (k^2-1)\cos^2\theta}.
\]
Therefore $k=1$.  
\end{proof}

As an immediate consequence of Theorem~\ref{wedge-theorem}, we have

\begin{theorem}\label{general-boundary-regularity-theorem}
Suppose $t\in [0,T]\mapsto M(t)$ is an $m$-dimensional standard mean curvature
flow with boundary $\Gamma$ in a smooth, $(m+1)$-dimensional Riemannian manifold.
If a tangent flow at $(p,t)$ is contained in a wedge, where $p\in \Gamma$ and $t>0$, 
then $(p,t)$ is a regular point of the flow.
\end{theorem}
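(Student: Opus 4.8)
The plan is to deduce the theorem directly from the Wedge Theorem~\ref{wedge-theorem} by a tangent-flow argument. First I would fix a tangent flow $S(\cdot)$ of $M(\cdot)$ at $(p,t)$, obtained as in~\S\ref{tangent-flow-section} as a subsequential limit of parabolic rescalings of $M(\cdot)$ about $(p,t)$. Since $p\in\Gamma$, this $S(\cdot):\tau\in(-\infty,0]\mapsto S(\tau)$ is a self-similar integral Brakke flow in the Euclidean space $\Tan(N,p)\cong\RR^{m+1}$ with boundary the $(m-1)$-dimensional linear subspace $\Gamma':=\Tan(\Gamma,p)$. Because both defining properties of a standard flow — unit-regularity and the condition $\partial[\,\cdot\,]=[\text{boundary}]$ at a.e.\ time — are invariant under parabolic dilation and are preserved under the convergence of Brakke flows (Theorem~\ref{closure-theorem}), the limit $S(\cdot)$ is again a standard Brakke flow. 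By hypothesis there is a wedge $W$ with $\spt S(\tau)\subseteq W$ for all $\tau<0$.

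Next I would check that the edge of $W$ is exactly $\Gamma'$, so that the Wedge Theorem is applicable. Since $S(\cdot)$ is standard, $\partial[S(\tau)]=[\Gamma']$ for a.e.\ $\tau<0$, hence $\Gamma'\subseteq\spt S(\tau)\subseteq W$. Writing $W=E\oplus S_0$ under an orthogonal splitting $\RR^{m+1}=E\oplus E^{\perp}$, where $E$ is the $(m-1)$-dimensional edge and $S_0\subseteq E^{\perp}\cong\RR^2$ is a closed planar sector of opening angle strictly less than $\pi$ (every wedge in the sense of Definition~\ref{wedge-definition} has this form, since two distinct hyperplanes through $0$ cut $\RR^2$, modulo their common $(m-1)$-plane, into four sectors of angle in $(0,\pi)$): for any $v\in\Gamma'$ both $v$ and $-v$ lie in $W$, which forces the $E^{\perp}$-component of $v$ to vanish. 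Hence $\Gamma'\subseteq E$, and since $\dim\Gamma'=\dim E=m-1$, in fact $\Gamma'=E$. Thus $S(\cdot)$ is a self-similar standard Brakke flow in $W$ with boundary equal to the edge of $W$, and Theorem~\ref{wedge-theorem} gives that $S(\cdot)$ is a non-moving halfplane with multiplicity one.

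Finally, a multiplicity-one halfplane is precisely one of the model tangent flows in the definition of unit-regularity, and a standard flow is by definition unit-regular; therefore $M(\cdot)$ is fully smooth in a spacetime neighborhood of $(p,t)$, which (together with the boundary local regularity theory of~\cite{white-local}) is exactly the assertion that $(p,t)$ is a regular point of the flow. The statement is "immediate" given Theorem~\ref{wedge-theorem}, so I do not expect a genuine obstacle; the only points needing care are the bookkeeping that the tangent flow of a standard flow — now in the limiting Euclidean ambient space — is still standard (so that both Theorem~\ref{wedge-theorem} and unit-regularity may be invoked for it), and the elementary convex-geometry observation that a wedge contains no $(m-1)$-dimensional linear subspace other than its edge.
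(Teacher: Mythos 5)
Your proposal is correct and follows exactly the route the paper intends: the paper simply records Theorem~\ref{general-boundary-regularity-theorem} as ``an immediate consequence'' of the Wedge Theorem~\ref{wedge-theorem}, and your argument supplies the implicit details (standardness of the tangent flow via Theorem~\ref{closure-theorem}, the observation that the wedge's edge must coincide with $\Tan(\Gamma,p)$ since a wedge contains no other $(m-1)$-dimensional linear subspace, and the appeal to unit-regularity once the tangent flow is shown to be a multiplicity-one halfplane). No gaps; this is the same argument, just written out.
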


\section{A Boundary Regularity Theorem}\label{boundary-regularity-section}

\begin{theorem}\label{main-boundary-regularity-theorem}
Suppose $N$ is a smooth, $(m+1)$-dimensional Riemannian manifold with smooth, weakly mean-convex boundary.
Suppose $\Gamma$ is a smooth, properly embedded $(m-1)$-dimensional submanifold of $N$.
Suppose
\[
   t\in [0,T]\mapsto M(t)
\]
is a standard Brakke flow in $N$ with boundary $\Gamma$.
If 
\begin{enumerate}
\item\label{reg-theorem-case-1} $\partial N$ is strictly mean convex, or if
\item\label{reg-theorem-case-2} $\spt(M(0))\cap \partial N =  \Gamma\cap \partial N$,
\end{enumerate}
then for every $p$ in $\Gamma\cap \partial N$ and for every $t\in (0,T]$, the spacetime point $(p,t)$
 is a regular point of the flow.
\end{theorem}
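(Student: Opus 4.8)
The plan is to invoke Theorem~\ref{general-boundary-regularity-theorem}: since $p\in\Gamma$ and $t>0$, it suffices to exhibit one tangent flow of $M(\cdot)$ at $(p,t)$ that is contained in a wedge. Embed $N$ isometrically as in~\S\ref{monotonicity-section} and pass to a tangent flow $S(\cdot)$ at $(p,t)$; it is a self-similar standard Brakke flow in the half-space $H:=\Tan(N,p)$ with boundary $\Gamma':=\Tan(\Gamma,p)$. Because $p\in\partial N$, the half-space $H$ is a genuine closed half-space of $T_pN\cong\RR^{m+1}$, and because $\Gamma$ is a submanifold of $N$ through $p$, the $(m-1)$-plane $\Gamma'$ lies in $H$; but a linear subspace contained in a closed half-space must lie in the boundary hyperplane, so in fact $\Gamma'\subset\partial H$. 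Write $\partial H\setminus\Gamma'=P_+\cup P_-$ for the two open $m$-dimensional halfplanes with common edge $\Gamma'$.

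The crux is the assertion
\[
   \spt S(-1)\cap\partial H\subseteq\Gamma' .
\]
To prove it I would first dispose of Case~\eqref{reg-theorem-case-1}: if $\partial N$ is strictly mean convex, then a barrier argument shows the flow leaves $\partial N$ instantaneously, so $\spt M(\epsilon)\cap\partial N\subseteq\Gamma$ for every $\epsilon>0$, and it suffices to treat the flow on $[\epsilon,T]$ and let $\epsilon\downarrow 0$. Thus one may assume the hypothesis of Case~\eqref{reg-theorem-case-2}, namely $\spt M(0)\cap\partial N\subseteq\Gamma$. Then the weak maximum principle for Brakke flows --- avoidance with the barrier $\partial N$, which is weakly mean convex and hence cannot be crossed from inside $N$, localized away from $\Gamma$ --- gives $\spt M(s)\cap\partial N\subseteq\Gamma$ for all $s$. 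The essential refinement needed is that near $p$ the flow leaves $\Gamma\cap\partial N$ inside a wedge-shaped region, i.e.\ $\dist(x,\partial N)\gtrsim\dist(x,\Gamma)$ for $x\in\spt M(s)$ with $(x,s)$ near $(p,t)$; I would obtain this by constructing, near $\Gamma\cap\partial N$ and using the mean convexity of $\partial N$, a one-parameter family of mean-convex comparison hypersurfaces containing $\Gamma$ that together trap the flow, and then rescaling this confinement estimate parabolically about $(p,t)$. This confinement step --- and especially its stability under the blow-up --- is the part I expect to be the main obstacle.

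Granting the assertion, the conclusion follows quickly. By self-similarity $\spt S(r)\setminus\Gamma'\subseteq\interior H$ for all $r<0$, and the boundary measure of $S(r)$ is carried by $\Gamma'$, so $S(\cdot)$ is an integral, self-similar Brakke flow without boundary in the open set $\RR^{m+1}\setminus\Gamma'$; moreover $\spt S(-1)\setminus\Gamma'$ is disjoint from $\overline{P_-}$, an $m$-dimensional halfplane with boundary $\Gamma'$, so the Shrinker Theorem~\ref{shrinker-theorem} applies and $S(-1)=\sum_{i=1}^{k}P^{(i)}$ is a sum of halfplanes with boundary $\Gamma'$. The assertion forces no $P^{(i)}$ to equal $\overline{P_+}$ or $\overline{P_-}$, so each $P^{(i)}\subset H$ points strictly into $\interior H$; writing $L:=(\Gamma')^{\perp}$ for the orthogonal $2$-plane, the $P^{(i)}$ correspond to rays in the open half-disc $L\cap\interior H$. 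Hence $\spt S(-1)=\bigcup_i P^{(i)}$ is contained either in a single halfplane (if the rays coincide) or in the closed wedge spanned by the two extreme ones, and in either case in a wedge with edge $\Gamma'$. Therefore some tangent flow at $(p,t)$ is contained in a wedge, and Theorem~\ref{general-boundary-regularity-theorem} shows that $(p,t)$ is a regular point of $M(\cdot)$.
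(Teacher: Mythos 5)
Your high-level structure matches the paper: reduce Case~\eqref{reg-theorem-case-1} to Case~\eqref{reg-theorem-case-2} via the strict mean-convexity barrier, show that the tangent flow at $(p,t)$ is contained in a wedge, and apply Theorem~\ref{general-boundary-regularity-theorem}. However, the step you explicitly flag as ``the main obstacle'' --- the quantitative confinement $\dist(x,\partial N)\gtrsim\dist(x,\Gamma)$ near $(p,t)$, strong enough to survive parabolic blow-up --- is precisely the technical heart of the paper's proof, and you do not supply it. The paper proves it via Lemma~\ref{flatish-disk-lemma}: graphical mean curvature flow solutions $F_r$ with Dirichlet data zero over disks $\BB^m(a,r)\subset\partial N$, where $a\in\Tan(\Gamma,p)^\perp\cap\partial N$ is chosen so that $\BB^{m+1}(a,|a|)$ meets $\Gamma$ only at $p$. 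The essential device is that each disk $\BB^m(a,r)$ with $r<2R$ is \emph{disjoint} from $\Gamma$, so the moving graphs $D_r(t)$ stay away from $\Gamma$; the avoidance $\overline{Q_r(t)}\cap\spt M(t)=\emptyset$ then follows from Theorem~\ref{maximum-principle}, with edge contact ruled out by Corollary~\ref{shrinker-no-boundary-corollary}, and the Hopf boundary point lemma (equation~\eqref{hopf}) yields the positive wedge-opening rate. Your proposed route --- ``mean-convex comparison hypersurfaces containing $\Gamma$'' --- is a genuinely different and more delicate idea: a barrier passing through $\Gamma$ touches the moving surface along $\Gamma$ at all times, so the strong maximum principle is no longer straightforwardly available; the paper's barriers avoid $\Gamma$ precisely to sidestep this.

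Two smaller remarks on the post-confinement part of your argument. First, your detour through the Shrinker Theorem~\ref{shrinker-theorem} is unnecessary: once the barrier gives a wedge-shaped avoided region on the $a$-side and the tangent flow lies in the half-space $\Tan(N,p)$, the complement of the open barrier wedge inside that half-space is itself a closed wedge with edge $\Tan(\Gamma,p)$, so the wedge containment is immediate without classifying the tangent flow as a sum of halfplanes. Second, and relatedly, your intermediate ``crux assertion'' $\spt S(-1)\cap\partial H\subseteq\Gamma'$ is stronger than needed: the paper's barrier argument only yields avoidance of the halfplane $P_+$ on the side of $a$ (the other halfplane $\overline{P_-}$ is already contained in the resulting wedge), and that one-sided estimate suffices.
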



\begin{proof}
Note that if $\partial N$ is strictly mean convex, then by the maximum principle (\cite{ilmanen-elliptic}*{10.5} or
\cite{hershkovits-white-avoid}*{theorem~27}), 
\[
  \spt(M(t))\cap\partial N = \Gamma\cap\partial N
\]
for all $t>0$.   In other words, as soon as $t>0$, Hypothesis~\eqref{reg-theorem-case-2} holds.
Thus it suffices to prove Theorem~\ref{main-boundary-regularity-theorem}
 under Hypothesis~\eqref{reg-theorem-case-2}.

Since the result is local, it suffices to work in a small neighborhood of the point $p$.
Such a neighborhood is diffeomorphic to a halfspace, so we may assume that
\[
   N= \{x\in \RR^{m+1}: x_{m+1}\ge 0\}
\]
with some smooth Riemannian metric $g$. We may also assume that $p$ is the origin and that
the metric is Euclidean at the origin (i.e., that $g_{ij}(0)=\delta_{ij}$).
  In the rest of the proof, $\dist(\cdot,\cdot)$ refers to $g$-distance, but $\BB^{m+1}(a,r)$
  is the Euclidean ball of radius $r$ about $a$, and if $f$ is a function from a domain in $\RR^m$ to $[0,\infty)$
  (so that the graph lies in $N$), then expressions such as $\nabla f$ and $\|f\|_{C^2}$ are with
  respect to the Euclidean metric.

\begin{lemma}\label{flatish-disk-lemma}
For every $\eps>0$, there is a $\delta>0$ 
 with the following property.
If $a\in \partial N\cong\RR^m$, $|a|\le \delta$, $0<r\le \delta$,
and 
\[
f:\BB^m(a,r)\to [0,\infty)
\]
is a function with $\|f\|_{C^3}\le \eps$, then there 
is a solution
\[
    F: \BB^m(a,r)\times [0,\infty) \to \RR
\]
of the nonparametric mean curvature flow equation (with respect to the metric on $N$)
such that 
\begin{gather*}
 F(\cdot,0)=f(\cdot), \\
 F(\cdot,t)|\partial \BB^m(a,r) = f | \partial \BB^m(a,r), \\ 
\end{gather*}
and such that $|\nabla F|<\eps$ at all points.
\end{lemma}

\begin{proof}[Proof of lemma]
Let $\eps>0$.
If there were no suitable $\delta>0$, there would be a sequence of solutions 
\[
    F_i: \BB^m(a_i,r_i)\times [0,T_i]\to [0,\infty)
\]
of the nonparametric $g$-mean-curvature flow equation with $|a_i|\to 0$ and $r_i\to 0$
such that
\begin{equation}\label{c-3}
    \|F_i(\cdot,0)\|_{C^3} \to 0
\end{equation}
and such that 
\[
   \max |\nabla F_i| = |\nabla F_i(x_i,T_i)| = \eps.
\]
for some $x_i$.  Note that 
\[
   T:= \liminf \frac{T_i}{r_i^2} > 0
\]
by, for example, the local regularity theory in~\cite{white-local}. By passing to a subsequence, we can assume that $T_i/r_i^2\to T$.

Let $D_i(t)$ be the graph of $F_i(\cdot,t)$.  By~\eqref{c-3},
\[
   \frac{\area_g(D_i(0))}{\omega_mr_i^m} \to 1
\]
and thus
\begin{equation}\label{rather-isoperimetric}
    \sup_{t\in [0,T_i]} \frac{\area_g(D_i(t))}{\omega_mr_i^m} = \frac{\area_g(D_i(0))}{\omega_mr_i^m} \to 1
\end{equation}
as $i\to \infty$.  
Let $c_i$ be the average of $f$ over $\partial \BB(a_i,r_i)$.
Translating in space by $-(a_i, c_i)$ and in time by $-T_i$, dilating parabolically by $1/r_i$,
and passing to a subsequential limit gives a smooth solution
\[
    F: \BB^m(0,1) \times (-T,0] \to \RR
\]
of the Euclidean nonparametric mean curvature flow equation such that
\begin{gather*}
   F(\cdot,t) | \partial \BB^m(0,1) \equiv 0, \quad\text{and} \\
   \sup |\nabla F| = \sup |\nabla F(\cdot,0)| = \eps.
\end{gather*}
But by~\eqref{rather-isoperimetric}, 
 the area of the graph of $F(\cdot,t)$ is less than or equal to area of $\BB^m(0,1)$, and thus
$F\equiv 0$, a contradiction.  This proves the lemma.
\end{proof}

We now complete the proof of Theorem~\ref{main-boundary-regularity-theorem}.
Choose $a\in \Tan(\Gamma,0)^\perp \cap \partial N$ with $0<|a|<\delta$ 
(where $\delta$ is as in Lemma~\ref{flatish-disk-lemma} for $\eps=1$.)  We choose $a$ sufficiently close to $0$
that the Euclidean ball $\BB^{m+1}(a,|a|)$ intersects $\Gamma$ in the single
point $0$.  Let $2R=|a|$.

Let $f: \RR^m\to [0,\infty)$
be a smooth function such that: $f(a)>0$, $f$ is supported in the interior of $\BB^m(a,R)$, $\|f\|_{C^3}<\delta$,
and
\begin{equation}\label{starting-off-disjoint}
\text{$(\spt M(0))\setminus \Gamma$  lies in the set $\{x: x_{m+1}> f(x_1,\dots,x_m)\}$.}
\end{equation}

For $R \le r \le 2R$, let 
\[
F_r: \BB^m(a,r)\times [0,T] \to \RR
\]
be the solution of the nonparametric mean curvature flow equation (with respect
to the metric $g$) such that
\begin{align*}
&\text{$F_r(\cdot) = 0$ on $\partial \BB^m(a,r)$, and} \\
&\text{$F_r(\cdot,0)=f(\cdot)$ on $\BB^m(a,r)$.}
\end{align*}
By choice of $a$, 
\[
   |\nabla F_r(x,t)|\le 1
\]
for all $x\in \BB^m(a,r)$ and $t\in [0,T]$.  It follows that the graph $D_r(t)$ of $F_r(\cdot,t)$
is contained in the ball $\BB^{m+1}(a,r)$, and thus 
\begin{equation}\label{disjointed}
   D_r(t)\cap \Gamma =\emptyset \quad\text{for all $r\in [R, 2R)$ and $t\in [0,T]$}.
\end{equation}
By the strong maximum principle,
\[
    \text{$F_r(x,t)>0$ for all $x$ in the interior of $\BB(a,r)$ and all $t\in [0,T]$}
\]
and
\begin{equation}\label{hopf}
    \text{$|\nabla F_r(x,t)|\ne 0$ for all $x\in \partial \BB^m(a,r)$ and $t\in (0,T]$.}
\end{equation}
Let
\[
   Q_r(t) = \{x: 0< x^{m+1} < F_r(x_1,\dots,x_m,t)\}.
\]

We claim that 
\begin{equation}\label{disjoint-claim}
\text{If $R\le r < 2R$, then $\overline{Q_r(t)}$ is disjoint from $\spt M(t)$ for $t\in [0,T]$.}
\end{equation}
For suppose not. At the first time $t$ of contact, let $q$ be
a point in $\spt M(t)\cap \overline{Q_r(t)}$.  Then $q$ is in the graph $D_r(t)$ of $F_r(\cdot,t)$.
If $q$ were in $\partial D_r(t)$, then the tangent flow to $M(\cdot)$ at $(q,t)$ would be
contained in a wedge (by~\eqref{hopf}), which is impossible (see Corollary~\ref{shrinker-no-boundary-corollary}).
Thus
\begin{equation}\label{disjoint-redux}
    \partial D(\tau)\cap M(\tau) = \emptyset \quad\text{for all $\tau \in [0,t]$},
\end{equation}
and $q\in D(t)\setminus \partial D(t)$.
But this (together with~\eqref{starting-off-disjoint} and~\eqref{disjointed})
violates the maximum principle (Theorem~\ref{maximum-principle}).
This proves~\eqref{disjoint-claim}.

From~\eqref{disjoint-claim}, we see that
\begin{equation}\label{empty}
   \spt M(t) \cap Q_{2R}(t) = \emptyset
\end{equation}
since $\cup_{r<2R} Q_r(t)= Q_{2R}(t)$.

Now let $t\in (0,T]$.   By~\eqref{empty} and~\eqref{hopf}, any tangent flow to $M(\cdot)$
at $(p,t)$ must be contained in a wedge.  
Thus $(p,t)$ is a regular point of the flow $M(\cdot)$ by Theorem~\ref{general-boundary-regularity-theorem}.
\end{proof}

\section{Moving Boundaries}\label{moving-boundary-section}

Let $I$ be an interval in $\RR$.
We say that $\Gamma$ is 
a {\bf moving $(m-1)$-dimensional boundary} in $U\times I$ if $\Gamma$ is  a smooth, properly embedded, $m$-dimensional 
manifold-with-boundary in $ U\times I$ 
such that the boundary of $\Gamma$ is 
\[
     \Gamma\cap (U\times \partial I)
\]
and such that the time function $(x,t)\in \Gamma\mapsto t$ has no critical points on $\Gamma$.
For $t\in I$, we let $\Gamma(t)=\{x: (x,t)\in \Gamma\}$.  For $(x,t)\in \Gamma$, we let $\dot\Gamma(x,t)$ be the
{\bf normal velocity} of $\Gamma(t)$ at $x$: it is the unique vector $v\in \Tan(\Gamma(t),x)^\perp$ 
such that $(v,1)$
is tangent to $\Gamma$ at $(x,t)$.  Note that each $\Gamma(t)$ is a smooth, properly embedded $m$-dimensional submanifold of 
  $U$.

\begin{definition}\label{moving-boundary-mcf}
Let $\Gamma\subset U\times I$ be a moving $(m-1)$-dimensional boundary.
A {\bf Brakke flow with (moving) boundary $\Gamma$} is a Borel map
\[
   t\in I \mapsto M(t) \in \MM(U)
\]
such that
\begin{enumerate}[\upshape (1)]
\item For almost every $t\in I$, $M(t)$ is in $\Vv_m(U,\Gamma(t))$.
\item
If $[a,b]\subset I$ and $K\subset U$ is compact, then
\[
   \int_a^b\int_K (1+|H|^2)\,dM(t)\,dt < \infty.
\]   
\item\label{defining-inequality-moving-case}
If $[a,b]\subset I$ and if $u$ is a nonnegative, compactly supported $C^2$ function on $U\times[a,b]$,
 then
\begin{align*}
&(Mu)(a) - (Mu)(b) \\
 &\quad \ge
\int_a^b \int \left( u|H|^2 - H\cdot \nabla u - \pdf{u}{t} \right)\,dM(t)\,dt
-
\int_a^b\int  u \nu\cdot \dot\Gamma\,d\Gamma(t)\,dt.
\end{align*}
\end{enumerate}
\end{definition}

\begin{theorem}
Suppose $t\in I\mapsto M(t)$ is a Brakke flow with moving boundary $\Gamma$.
\begin{enumerate}[\upshape(1)]
\item
The defining inequality~\eqref{defining-inequality-moving-case} in Definition~\ref{moving-boundary-mcf}
 holds for every nonnegative, compactly supported,
Lipschitz function $u$ on $U$ that is $C^1$ on $\{u>0\}$.
\item
If 
\[
   f:U\times[a,b]\to \RR
\]
is a nonnegative, $C^2$ function with $\overline{\{f>0\}}$ compact, and if $u:= 1_{f\ge 0}f$, then
\begin{align*}
&(Mu)(a)-(Mu)(b)
\\
&\quad\ge
\int \left(u|H|^2 + \Div_M\nabla u  - \pdf{u}t \right)\,dM(t)\,dt + \int \nu\cdot(\nabla u - u\dot\Gamma)\,d\Gamma
\\
&\quad\ge
\int \left(u|H|^2 + \Tr_m(\nabla^2 u) - \pdf{u}t \right)\, dM(t)\, dt + \int \nu\cdot(\nabla u - u\dot\Gamma)\,d\Gamma.
\end{align*}
\item 
Suppose that
\begin{align*}
&\text{$\overline{\BB(x,R)}$ is a compact subset of $U$}, \\
&\text{$\dist(\cdot,x)^2$ is smooth on $\overline{\BB(x,R)}$,} \\
&\text{$\Tr_m(-\nabla^2(\dist(\cdot,x)^2))\ge -4m$ on $\BB(x,R)$, and} \\
&\text{$|\dot\Gamma|\le \delta$ on $\Gamma\cap(\BB(x,R)\times[0,T])$}.
\end{align*}
Let $u= (R^2-\dist(\cdot,x)^2- 4mt)^+$.
Then for $t\in [0,T]$,
\[
   (Mu)(t) \le  (Mu)(0) + t(R + \delta R^2)\Hh^{m-1}(\Gamma\cap \BB(x,R)).
\]
\item
Let $u$ be a  $C^2$, nonnegative, compactly supported function on $U$.
\begin{align*}
  \frac12 \int_a^b\int u |H|^2\,dM(t)\,dt 
  &\le  M(a)u- M(b)u  \\
  &\quad + (b-a)(\max|\nabla^2u|)K_{[a,b]} \\
  &\quad + (b-a) L_{[a,b]}
\end{align*}
for $[a,b]\subset I$, where 
\begin{align*}
   K_{[a,b]} &:= \sup_{t\in [a,b]}M(t)(\spt u), \\
   L_{[a,b]} &:= \sup_{t\in [a,b]} \int u |\dot\Gamma|\,d\Gamma(t).
\end{align*}
\end{enumerate}
\end{theorem}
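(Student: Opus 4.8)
The four assertions are the moving-boundary counterparts of Proposition~\ref{more-general-u-proposition}, Corollary~\ref{trace-corollary}, Theorem~\ref{trace-theorem}, and Theorem~\ref{H-squared-control}, respectively, and the plan is to run each of those proofs essentially verbatim, carrying along the extra term $\int_a^b\int u\,\nu\cdot\dot\Gamma\,d\Gamma(t)\,dt$ that appears on the right of the moving-boundary defining inequality~\eqref{defining-inequality-moving-case}.

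For Assertion~(1) I would copy the proof of Proposition~\ref{more-general-u-proposition}: approximate the Lipschitz function $u$ by nonnegative, compactly supported $C^2$ functions $u_n$ (with $u_n\to u$ uniformly, $\nabla u_n\to\nabla u$ where $u$ is $C^1$, and all $u_n$ supported in one fixed compact set), apply~\eqref{defining-inequality-moving-case} to each $u_n$, and pass to the limit by Dominated Convergence. The interior terms are handled exactly as in the fixed-boundary case, using condition~(2) of Definition~\ref{moving-boundary-mcf} to dominate the $|H|^2$ integrand. The only new ingredient is that $\int_a^b\int u_n\,\nu\cdot\dot\Gamma\,d\Gamma(t)\,dt\to\int_a^b\int u\,\nu\cdot\dot\Gamma\,d\Gamma(t)\,dt$; this is again Dominated Convergence, since $|\nu|\le1$, $\dot\Gamma$ is locally bounded on $\Gamma$ (the time function has no critical point on the smooth submanifold $\Gamma$), each $\Gamma(t)$ has locally finite $\Hh^{m-1}$ measure, and the $u_n$ are uniformly bounded with common compact support.

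Assertion~(2) then follows from Assertion~(1) together with Lemma~\ref{trace-lemma} and the pointwise inequality $\Div_M\nabla u\ge\Tr_m(\nabla^2u)$, in exactly the way Corollary~\ref{trace-corollary} follows from Proposition~\ref{more-general-u-proposition} and Lemma~\ref{trace-lemma}; the $\dot\Gamma$-term is simply retained. Assertion~(3) is the special case $u=(R^2-\dist(\cdot,x)^2-4mt)^+=1_{f\ge0}f$ with $f=R^2-\dist(\cdot,x)^2-4mt$: since $\partial_t u=-4m$ on $\{u>0\}$ and $\Tr_m(\nabla^2u)=\Tr_m(-\nabla^2(\dist(\cdot,x)^2))\ge-4m$ on $\BB(x,R)$ by hypothesis, the bulk integrand $u|H|^2+\Tr_m(\nabla^2u)-\partial_t u$ is nonnegative and drops out; the gradient part of the boundary term is estimated exactly as in the passage from Corollary~\ref{trace-corollary} to Theorem~\ref{trace-theorem}, while the new part contributes at most $\int_0^t\int u|\dot\Gamma|\,d\Gamma(\tau)\,d\tau\le t\,\delta R^2\,\Hh^{m-1}(\Gamma\cap\BB(x,R))$, using $0\le u\le R^2$ on $\BB(x,R)\supseteq\{u>0\}$ and $|\dot\Gamma|\le\delta$ there; this yields the extra $t\,\delta R^2\,\Hh^{m-1}(\Gamma\cap\BB(x,R))$ summand.

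Finally, for Assertion~(4) I would follow the proof of Theorem~\ref{H-squared-control}: by \cite{ilmanen-elliptic}*{Lemma~6.6} one has $|\nabla u|^2\le2|u|\max|\nabla^2u|$, so wherever $u>0$, $u|H|^2-H\cdot\nabla u\ge\frac12u|H|^2-\max|\nabla^2u|$; plugging this into~\eqref{defining-inequality-moving-case} and bounding $\int_a^b\int\max|\nabla^2u|\,dM(t)\,dt\le(b-a)(\max|\nabla^2u|)K_{[a,b]}$ gives the claim after rearranging, once the new term is absorbed via $\bigl|\int_a^b\int u\,\nu\cdot\dot\Gamma\,d\Gamma(t)\,dt\bigr|\le\int_a^b\int u|\dot\Gamma|\,d\Gamma(t)\,dt\le(b-a)L_{[a,b]}$ (using $|\nu|\le1$). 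I do not expect any serious obstacle: every step is a transcription of an argument already in the paper, and the only thing needing care is the consistent treatment of the new boundary term --- in particular its stability under the $C^2$-approximation of Assertion~(1), which rests on the local boundedness of $\dot\Gamma$, and obtaining the correct constant and power of $R$ for it in Assertion~(3).
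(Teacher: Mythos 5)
Your proposal is correct and follows exactly the route the paper takes: the paper simply states that ``the proofs are almost identical to the proofs of Proposition~\ref{more-general-u-proposition}, Corollary~\ref{trace-corollary}, Theorem~\ref{trace-theorem}, and Theorem~\ref{H-squared-control},'' and your write-up is a faithful elaboration of that, with the extra term $\int_a^b\int u\,\nu\cdot\dot\Gamma\,d\Gamma(t)\,dt$ from~\eqref{defining-inequality-moving-case} carried along and bounded in each assertion (by $t\,\delta R^2\,\Hh^{m-1}(\Gamma\cap\BB(x,R))$ in Assertion~(3) using $0\le u\le R^2$ and $|\dot\Gamma|\le\delta$ there, and by $(b-a)L_{[a,b]}$ in Assertion~(4) using $|\nu|\le1$).
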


The proofs are almost identical to the proofs of Proposition~\ref{more-general-u-proposition}, 
Corollary~\ref{trace-corollary}, Theorem~\ref{trace-theorem}, and Theorem~\ref{H-squared-control}.

\begin{theorem}\label{moving-boundary-monotonicity}
Let $U$ be an open subset of $\RR^d$ containing $\BB^d(0,1)$.
Let $N$ be a smooth, properly embedded submanifold of $U$.
Let $\Gamma$ be an $(m-1)$-dimensional moving boundary in $N\times [T_0, 0]$.
Let $t\in [T_0,0]\mapsto M(t)$ be an integral Brakke flow in $N$ with moving boundary $\Gamma$.
Suppose that
\[
    M(t)\BB(0,1)\le C
\]
for $t\in I$ and that the norm of the second fundamental form of $N$ is bounded by $A$.
Then for $a,b\in [T_0,0]$ with $a\le b< 0$, 
Then for $a,b\in I$ with $a\le b< 0$, 
\begin{equation}\label{moving-monotonicity-inequality}
\begin{aligned}
(M\hat\rho)(a) - (M\hat\rho)(b)
&\ge
\int_a^b \int \left| H_N - \frac{(\nabla^\perp\hat\rho)_N}{\hat\rho}\right|^2 \hat\rho\,dM(t)\,dt
\\
&\qquad
+ \int_a^b \int \nu_M\cdot (\nabla\hat\rho - \hat\rho\dot\Gamma)\,d\Gamma\,dt
\\
&\qquad
-  mA^2 \int_a^b \int \hat\rho\,dM(t)\,dt
\\
&\qquad
-
CK(b-a)
\end{aligned}
\end{equation}
where $C$ and $K$ are as in~\eqref{K-constant-monotonicity} 
and~\eqref{C-constant-monotonicity}.
Furthermore,
\begin{equation}\label{moving-monotone-quantity}
    e^{-mA^2t} \left( (M\hat\rho)(t) 
     + \int_{\tau=T_0}^t \int \nu_M\cdot (\nabla\hat\rho -\hat\rho\, \dot\Gamma)\, d\Gamma\,d\tau - CKt \right)
\end{equation}
is a decreasing function of $t$ for $t<0$ in $I$.
\end{theorem}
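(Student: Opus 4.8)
The plan is to follow the proof of the fixed-boundary Huisken monotonicity (Theorem~\ref{monotonicity-theorem}) essentially verbatim; the only new ingredient is the extra moving-boundary term in the defining inequality of Definition~\ref{moving-boundary-mcf}, and that term does not interact with any of the subsequent manipulations. First I would apply the defining inequality (part~\eqref{defining-inequality-moving-case} of Definition~\ref{moving-boundary-mcf}, extended to Lipschitz test functions as in Proposition~\ref{more-general-u-proposition}) with the test function $u=\hat\rho(\cdot,t)$, which for $t<0$ is smooth, nonnegative, and supported in $\BB(0,1)$. This yields exactly the first display in the proof of Theorem~\ref{monotonicity-theorem}, augmented by the single extra term $-\int_a^b\int\hat\rho\,\nu_M\cdot\dot\Gamma\,d\Gamma(t)\,dt$ coming from the moving boundary.

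Next I would run that proof's chain of inequalities without change: use \eqref{K-constant-monotonicity} together with $M(t)\BB(0,1)\le C$ to trade $-\partial_t\hat\rho$ for $\Div_M\nabla\hat\rho+|\nabla^\perp\hat\rho|^2/\hat\rho$ at a cost of $-CK(b-a)$; use the divergence/trace identity \eqref{divergence-rewritten} (in the form used in Corollary~\ref{trace-corollary}) to rewrite $\int\Div_M\nabla\hat\rho\,dM$ as $-\int H\cdot\nabla\hat\rho\,dM$ plus the boundary integral $\int\nu_M\cdot\nabla\hat\rho\,d\Gamma$; and then complete the square on the interior integrand $\hat\rho|H_N|^2-H_N\cdot\nabla\hat\rho-H\cdot\nabla\hat\rho+|\nabla^\perp\hat\rho|^2/\hat\rho$ exactly as there, using orthogonality of $H_N$ and the bound $|H_{N^\perp}|^2\le mA^2$, to bound it below by $|H_N-(\nabla^\perp\hat\rho)_N/\hat\rho|^2\hat\rho-mA^2\hat\rho$. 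The extra term $-\int\hat\rho\,\nu_M\cdot\dot\Gamma\,d\Gamma$ is merely carried along and combines with $\int\nu_M\cdot\nabla\hat\rho\,d\Gamma$ to produce the term $\int\nu_M\cdot(\nabla\hat\rho-\hat\rho\,\dot\Gamma)\,d\Gamma$ appearing in \eqref{moving-monotonicity-inequality}. This establishes \eqref{moving-monotonicity-inequality}. For the final assertion, set
\[
  f(t)=(M\hat\rho)(t)+\int_{\tau=T_0}^t\int\nu_M\cdot(\nabla\hat\rho-\hat\rho\,\dot\Gamma)\,d\Gamma\,d\tau-CKt;
\]
then \eqref{moving-monotonicity-inequality} says $f'(t)\le mA^2 f(t)$ in the distributional sense for $t<0$ in $I$, hence $\frac{d}{dt}\bigl(e^{-mA^2t}f(t)\bigr)\le 0$, which is the asserted monotonicity of \eqref{moving-monotone-quantity}. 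This is verbatim the last paragraph of the proof of Theorem~\ref{monotonicity-theorem}.

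There is no essential difficulty beyond this bookkeeping; the one point worth checking is that $f$ is well defined, i.e.\ that $\int\hat\rho\,|\dot\Gamma|\,d\Gamma(t)$ is finite and bounded uniformly in $t<0$. This holds because $\Gamma$, being a smooth properly embedded moving boundary, has $|\dot\Gamma|$ bounded on $\Gamma\cap(\overline{\BB(0,1)}\times I)$, so that $\int\hat\rho\,|\dot\Gamma|\,d\Gamma(t)$ is controlled by the same estimate that gives finiteness of $C_\Gamma$ in \eqref{C-gamma-constant-monotonicity}; with that in hand the argument closes exactly as in the fixed-boundary case.
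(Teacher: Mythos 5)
Your proposal is correct and follows essentially the same route as the paper: the paper's proof simply states that it is exactly like the proof of Theorem~\ref{monotonicity-theorem}, with the one extra term $-\int_a^b\int\hat\rho\,\dot\Gamma\cdot\nu\,d\Gamma\,dt$ appearing on the right side of the first inequality in~\eqref{M-portion} and then carried along, which is precisely what you do. Your extra remark on the finiteness of $\int\hat\rho\,|\dot\Gamma|\,d\Gamma$ (needed so that $f$ is well defined) is a harmless and correct bit of additional bookkeeping that the paper leaves implicit (cf.~\eqref{moving-finiteness} in Corollary~\ref{moving-monotonicity-corollary}).
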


\begin{proof}
The proof is exactly like the proof of the  proof of the Monotonicity Theorem~\ref{monotonicity-theorem},
except that, starting with the right hand side of the first inequality in~\eqref{M-portion}, there is 
one additional extra term:
\[
  -  \int_a^b \int \hat\rho\, \dot\Gamma\cdot \nu\,d\Gamma\,dt.
\]
\end{proof}

\begin{corollary}\label{moving-monotonicity-corollary}
As $t\uparrow 0$, $(M\hat\rho)(t)$ converges to a finite limit.
\end{corollary}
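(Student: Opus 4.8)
The plan is to follow the proof of the corresponding statement in the fixed-boundary case (the corollary immediately after Theorem~\ref{monotonicity-theorem}), carrying along the extra boundary-velocity term. By Theorem~\ref{moving-boundary-monotonicity} the quantity in~\eqref{moving-monotone-quantity}, call it $g(t)$, is a decreasing function of $t$ for $t<0$ in $I$, so $g_0:=\lim_{t\uparrow 0}g(t)$ exists in $[-\infty,\infty)$. Writing
\[
  (M\hat\rho)(t)= e^{mA^2 t}g(t) - \int_{\tau=T_0}^t\int \nu_M\cdot(\nabla\hat\rho-\hat\rho\,\dot\Gamma)\,d\Gamma\,d\tau + CKt
\]
and noting that $e^{mA^2 t}\to 1$ and $CKt\to 0$ as $t\uparrow 0$, it suffices to show that
\[
  B(t):=\int_{\tau=T_0}^t\int \nu_M\cdot(\nabla\hat\rho-\hat\rho\,\dot\Gamma)\,d\Gamma\,d\tau
\]
converges to a finite limit as $t\uparrow 0$. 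Granting this, $(M\hat\rho)(t)$ has a limit in $[-\infty,\infty)$, and since $(M\hat\rho)(t)\ge 0$ for all $t$ that limit lies in $[0,\infty)$, which is the assertion.

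To prove that $B(t)$ converges to a finite limit, I would bound the integrand in $L^1$ on $\Gamma\cap(\spt\hat\rho\times[T_0,0])$ and then let $t\uparrow 0$ (so $B(t)\to\int_{\tau=T_0}^0\int(\cdots)$, which is finite). Since $|\nu_M|\le 1$ and $\nu_M$ is perpendicular to $\Gamma$ (Remark~\ref{nu-perpendicular}), and $|\dot\Gamma|\le\delta$ on the compact set in question,
\[
  |\nu_M\cdot(\nabla\hat\rho-\hat\rho\,\dot\Gamma)| \le |(\nabla\hat\rho)_{\Gamma^\perp}| + \delta\,\hat\rho .
\]
The first term is handled exactly as in the fixed-boundary case: by (the computation underlying)~\eqref{C-gamma-constant-monotonicity}, $\int|(\nabla\hat\rho(\cdot,\tau))_{\Gamma^\perp}|\,d\Gamma$ is bounded uniformly in $\tau<0$, hence its double integral over $[T_0,0]$ is finite. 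For the second term one needs the additional estimate
\[
  \int \hat\rho(\cdot,\tau)\,d\Gamma \le C_\Gamma'\,|\tau|^{-1/2}\qquad(\tau<0),
\]
which follows from the same Gaussian computation as~\eqref{C-gamma-constant-monotonicity} (cf.~\cite{kasai-tonegawa}*{\S6.1}): integrating the Gaussian $\rho(\cdot,\tau)$, normalized as a density in $m$ variables, over the $(m-1)$-dimensional submanifold $\Gamma$ leaves one factor of $|\tau|^{-1/2}$. Since $\int_{T_0}^0|\tau|^{-1/2}\,d\tau = 2|T_0|^{1/2}<\infty$, the double integral of $\delta\,\hat\rho$ is finite too, so the integrand is in $L^1$ and $B(t)$ has a finite limit.

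The only ingredient not already present in the fixed-boundary argument is the bound $\int_\Gamma\hat\rho(\cdot,\tau)\,d\Gamma=O(|\tau|^{-1/2})$, i.e.\ that the blow-up of $\int_\Gamma\hat\rho$ as $\tau\uparrow 0$ is a mild, integrable singularity. This is the main (and only mildly nontrivial) point; it is an elementary consequence of the Gaussian estimates already invoked for~\eqref{C-gamma-constant-monotonicity}, and everything else is bookkeeping identical to the proof of the fixed-boundary corollary.
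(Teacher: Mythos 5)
Your proposal is correct and takes the same route as the paper: the paper's proof simply asserts that a straightforward computation gives $\int_{T_0}^0\int|(\nabla\hat\rho-\hat\rho\,\dot\Gamma)_{\Gamma^\perp}|\,d\Gamma\,dt<\infty$ and then invokes the monotonicity of~\eqref{moving-monotone-quantity}, which is exactly your argument with the "straightforward computation" spelled out. Your decomposition of the boundary integrand into the $|(\nabla\hat\rho)_{\Gamma^\perp}|$ piece (uniformly bounded as in~\eqref{C-gamma-constant-monotonicity}) plus the $\delta\hat\rho$ piece (with the integrable $O(|\tau|^{-1/2})$ singularity) is a natural way to carry out that computation and is consistent with what the paper leaves implicit.
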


\begin{proof}
A straightforward computation shows that
\begin{equation}\label{moving-finiteness}
 \int_{T_0}^0 \int | (\nabla\hat\rho - \hat\rho\,\dot\Gamma)_{\Gamma^\perp}|\,d\Gamma\,dt < \infty.
\end{equation}
The corollary now follows immediately from the monotonicity of~\eqref{moving-monotone-quantity}.
\end{proof}

The compactness and closure theorems, existence of tangent flows,
and the definition of standard flows are the exact analogs are
the corresponding theorems and definition for 
fixed boundaries (\S\ref{compactness-section}, \S\ref{tangent-flow-section}, \S\ref{standard-section})
so we will not state them.   
For example, in the statement of the Compactness Theorem~\ref{flow-compactness}, one simply replaces 
  ``Brakke flow with boundary $\Gamma_i$'' by
``Brakke flow with moving boundary $\Gamma_i$'' and 
 ``smooth, properly embedded $(m-1)$-dimensional submanifold $\Gamma$ of $U$'' 
 by
  ``moving $(m-1)$-dimensional boundary $\Gamma$ in $U\times [0,T]$''.
Likewise, in the statement of Theorem~\ref{main-flow-compactness-theorem}, one simply
replaces 
   ``smooth, $(m-1)$-dimensional submanifolds of $U$''
by
   ``moving $(m-1)$-dimensional  boundaries in $U\times [0,T]$''
 and 
    ``Brakke flow with boundary'' by ``Brakke flow with moving boundary''.

For those various theorems, the extra term arising from the motion of 
boundary is easy to control, so only trivial modifications of the proofs are required.

Just as for fixed boundaries, we have
(as an immediate consequence of the Wedge Theorem~\ref{wedge-theorem}),

\begin{theorem}\label{general-moving-boundary-regularity-theorem}
 Suppose that $M:[0,T]\mapsto M(t)$ is an
$m$-dimensional standard Brakke flow with moving boundary $\Gamma$
in an $(m+1)$-dimensional Riemannian manifold.  If $t>0$, if $p\in \Gamma(t)$,
and if a tangent flow at $(p,t)$ is contained in a wedge, then $(p,t)$ is a regular
point of the flow $M(\cdot)$.
\end{theorem}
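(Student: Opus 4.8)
The plan is to run the argument of Theorem~\ref{general-boundary-regularity-theorem} in the moving-boundary setting: pass to a tangent flow at $(p,t)$, recognize it as a self-similar standard Brakke flow with a static linear boundary, apply the Wedge Theorem~\ref{wedge-theorem}, and conclude by unit-regularity. Everything is already in place except for one point --- the structure of tangent flows at a point of a moving boundary --- so that is where the work lies.

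First I would construct the tangent flow. The moving-boundary monotonicity formula (Theorem~\ref{moving-boundary-monotonicity}) and Corollary~\ref{moving-monotonicity-corollary} give a finite Gaussian density at $(p,t)$; combined with the moving-boundary analogs of the compactness and closure results of \S\ref{compactness-section} (Theorems~\ref{flow-compactness}, \ref{main-flow-compactness-theorem}, and~\ref{closure-theorem}), this yields, for any sequence $\lambda_i\to\infty$, a subsequence along which the parabolically rescaled flows converge to a flow $S(\cdot)$ in $\Tan(N,p)\cong\RR^{m+1}$. The one new ingredient is the behavior of the boundary under the dilations $\Dd_\lambda$, $\lambda\to\infty$: because $\Gamma$ is a smooth submanifold of spacetime, $|\dot\Gamma|$ is bounded near $(p,t)$, so the normal velocity of $\Gamma$ gets multiplied by $\lambda^{-1}$ and tends to $0$, and the boundary-motion term in~\eqref{moving-monotonicity-inequality} is negligible in the limit. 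Hence $S(\cdot)$ is genuinely self-similar, its boundary is the \emph{static}, $(m-1)$-dimensional linear subspace $\Gamma_0:=\Tan(\Gamma(t),p)$, and (by the moving-boundary version of Theorem~\ref{closure-theorem}) $S(\cdot)$ is again a standard Brakke flow.

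Next, by hypothesis such a tangent flow $S(\cdot)$ lies in a slab; since $S(\cdot)$ is self-similar its support is a cone, which therefore lies in the mid-hyperplane of the slab, a hyperplane containing $\Gamma_0$. This places $S(\cdot)$ in the hypotheses of the Wedge Theorem~\ref{wedge-theorem} (with edge $\Gamma_0$; concretely, the halfplane through $\Gamma_0$ orthogonal to that hyperplane is disjoint from $\spt S(-1)\setminus\Gamma_0$, so the Shrinker Theorem~\ref{shrinker-theorem} applies and the conormal count of the Wedge Theorem finishes). Thus $S(\cdot)$ is a non-moving multiplicity-one halfplane. Since $M(\cdot)$ is standard, hence unit-regular, a multiplicity-one halfplane tangent flow at $(p,t)$ forces $(p,t)$ to be a regular point of $M(\cdot)$.

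The substantive classification step is exactly the Wedge Theorem and needs nothing new. The one place that requires care is the blow-up: verifying that the bound on $|\dot\Gamma|$ really does kill the boundary-motion term in the monotonicity formula in the limit, so that the tangent flow is self-similar with a \emph{static} linear boundary $\Gamma_0$ rather than a moving one. This is routine but has no analog in the fixed-boundary proof, and it is the only genuinely new point.
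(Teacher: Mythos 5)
Your outline follows the paper's approach: the paper dismisses the proof with the one-line remark that it is an immediate consequence of the Wedge Theorem~\ref{wedge-theorem}, and you fill in exactly the steps one needs — pass to a tangent flow, observe that $\dot\Gamma$ scales like $\lambda^{-1}$ under $\Dd_\lambda$ so the limiting boundary is the \emph{static} linear subspace $\Gamma_0=\Tan(\Gamma(t),p)$, use the moving-boundary analogs of monotonicity, compactness, and the closure theorem to see the tangent flow is a self-similar \emph{standard} Brakke flow, and conclude via the Wedge Theorem plus unit-regularity. Identifying the vanishing of $\dot\Gamma$ in the blow-up as the only genuinely new ingredient is exactly the right reading of the paper's remark that the extra boundary-motion term requires ``only trivial modifications.''

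There is, however, a gap at the step where you pass from ``contained in a slab'' to the hypotheses of the Wedge Theorem, and it is worth isolating because it also afflicts the statement as printed. You argue (correctly, once one replaces ``its support is a cone'' by the dilation-invariance of $\bigcup_{t<0}\spt S(t)$) that a self-similar flow in a slab has $\spt S(-1)$ contained in a hyperplane $H$ through $0$ with $\Gamma_0\subset H$. But a hyperplane is \emph{not} a wedge in the sense of Definition~\ref{wedge-definition} (a wedge has two distinct bounding $m$-planes, hence a cross-sectional opening strictly less than $\pi$), and the conormal estimate in the proof of Theorem~\ref{wedge-theorem} genuinely needs $\theta<\pi/2$: in the degenerate case $\theta=\pi/2$ the inequality~\eqref{trig} only gives $|\nu'|\ge 1$, which does not force $k=1$. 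Concretely, write $H\setminus\Gamma_0 = H^+\cup H^-$ and consider the static varifold $2\,\Hh^m\llcorner H^+ + \Hh^m\llcorner H^-$. This is a self-similar, unit-regular integral Brakke flow with $\partial[S(-1)]=[\Gamma_0]$ and $|\nu(S(-1),\cdot)|\equiv 1$, so it satisfies every hypothesis your argument uses except that it does not lie in any wedge with edge $\Gamma_0$ — and it is not a multiplicity-one halfplane. Your phrase ``the conormal count of the Wedge Theorem finishes'' therefore does not close the argument in the slab case. The cleanest repair is to strengthen ``slab'' to ``wedge'' (matching the fixed-boundary Theorem~\ref{general-boundary-regularity-theorem}); alternatively, one must supply a separate argument ruling out configurations such as $2H^+ + H^-$ as tangent flows of standard flows, which the Wedge Theorem itself does not provide.
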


Furthermore, the main boundary regularity theorem,
 Theorem~\ref{main-boundary-regularity-theorem}, continues to hold for moving boundaries.  The statement and the proof are almost exactly as in the non-moving versions, so we omit them.

Finally, the Existence Theorem~\ref{existence-theorem}
 in Section~\ref{existence-section}
 has an analog for moving boundaries:

\begin{theorem}\label{moving-existence-theorem}
 Let $N$ be a smooth Riemannian manifold.  
If $N$ has nonempty boundary, we assume that the boundary is smooth and $m$-convex.
Suppose that $\Gamma$ is a moving $(m-1)$-dimensional boundary
 in $N \times[0,\infty)$ such that 
 \begin{equation}\label{compactness-hypothesis}
    \textnormal{$\cup_{0\le t\le T} \Gamma(t)$ is compact for every $T<\infty$.}
 \end{equation}
Let $M_0$ be a smoothly embedded $m$-dimensional manifold in $N$
 with moving boundary $\Gamma(0)$
and with finite area,
or, more generally, let $M_0$ be an $m$-rectifiable set of finite $m$-dimensional
measure such that $\partial [M_0]=[\Gamma]$.
Then there exists a standard Brakke flow 
\[
  t\in [0,\infty) \mapsto M(t)
\]
with moving boundary $\Gamma$ such that 
\[
    M(t) \rightharpoonup \Hh^m\llcorner M_0
\]
as $t\to 0$.

If $p\notin\Gamma(0)$ and if $M_0$ is smooth in a neighborhood of $p$, then the flow is smooth
in a spacetime neighborhood of $(p,0)$.   If  $p\in \Gamma(0)$ and if $M_0$ is $C^{1,\alpha}$ in a neighborhood
of $p$, then the flow is parabolically $C^1$ in a spacetime neighborhood of $(p,0)$.
\end{theorem}

\begin{proof}[Sketch of proof] It is convenient to first prove the Theorem under the additional assumption
that there is a $T<\infty$ such that
\begin{equation}\label{eventually-still}
  \textnormal{$\Gamma(t) =  \Gamma(T)$ for all $t\ge T$}.
\end{equation}
With this assumption, the proof is almost identical to the proof of 
Theorem~\ref{existence-theorem}.  (The compactness hypothesis~\eqref{compactness-hypothesis} together with the 
assumption~\eqref{eventually-still} guarantee that the analog of $\Sigma^\lambda$ in the proof of 
Theorem~\ref{existence-theorem} has finite area with respect to the Ilmanen metric $e^{-2\lambda z/m}(g+dz^2)$ .)

For the general case, one takes a sequence $\Gamma^n$ of moving boundaries that 
\begin{align*}
\Gamma^n(t)&=\Gamma(t) \quad\text{for $0\le t\le n$}, \\
\Gamma^n(t) &= \Gamma^n(n+1) \quad\text{for $t\ge n+1$},
\end{align*}
and such that the compactness hypothesis~\eqref{compactness-hypothesis} holds for each $\Gamma^n$.  By the special case of the Theorem, there exists a
standard Brakke flow $\MM^n$ with moving boundary $\Gamma^n$ satisfying the conclusions of the Theorem.  By the moving boundary analogs of the Compactness Theorem~\ref{flow-compactness} and the
 Closure Theorem~\ref{main-flow-compactness-theorem},
  a subsequence of the flows $\MM^n$ will converge to a standard Brakke flow $\MM$ satisfying the conclusions of the Theorem.
\end{proof}

\section{Orientation}\label{orientation-section}

  In this section, we show that a certain weak notion of orientability
is preserved when taking weak limits of flows.

Suppose $\Gamma$ is an $(m-1)$-dimensional submanifold of $U$.
An {\bf orientation} of $\Gamma$ is a continuous function $\gamma$ that assigns
to each $x\in \Gamma$ a unit $(m-1)$-vector in $\Tan(\Gamma,x)$.
The pair $(\Gamma,\gamma)$ is called an {\bf oriented submanifold}
of $U$.

If $\Gamma$ is a moving $(m-1)$-dimensional boundary in $U\times I$,
an orientation on $\Gamma$ is a continuous function that assigns to each $(x,t)$ in $\Gamma$
a unit $(m-1)$-vector in $\Tan(\Gamma(t),x)$.

Now suppose that $(\Gamma,\gamma)$ is an $(m-1)$-dimensional, smooth, oriented
submanifold of $U$ and that $M\in I\MM_m(U)$.
We say that $M$ is {\bf boundary-compatible} with $(\Gamma,\gamma)$ provided
there is an $m$-dimensional, integer-multiplicity rectifiable current $A$ such that
\begin{enumerate}
\item\label{compatible1} $\partial A$ is the current given by $\Gamma$ with orientation $\gamma$ and multiplicity $1$. 
\item\label{compatible2} $M = \mu_A + 2Z$ for some $Z\in I\MM_m(U)$.
\end{enumerate}
Condition~\eqref{compatible2} is equivalent to 
\[
  \text{$\Theta(M,x) - \Theta(\mu_A,x)$ is a nonnegative even integer for $\Hh^m$ almost every $x$.}
\]
It is also equivalent to:
\[
 \text{$\mu_A\le M$, and $A$ and $M$ determine the same flat chain mod $2$.}
\]
Note that if $M$ is boundary compatible with $(\Gamma,\gamma)$, then $\partial [M]=[\Gamma]$.

The following theorem is an immediate consequence of Theorem~1.2 in \cite{white-duke}:

\begin{theorem}\label{oriented-duke-journal-theorem}
Suppose that $(\Gamma_i,\gamma_i)$ $(i=1,2,\dots)$ and $(\Gamma,\gamma)$
are smooth oriented $(m-1)$-dimensional submanifolds of $U$ such that the $(\Gamma_i,\gamma_i)$
converge weakly (i.e., as rectifiable currents) to $(\Gamma,\gamma)$.
Suppose that $M_i$ $(i=1,2,\dots)$ and $M$ are Radon measures in $I\MM_m(U)$ with the following 
properties:
\begin{enumerate}
\item\label{duke-oriented-hypothesis1} $\Var(M_i)\rightharpoonup \Var(M)$.
\item\label{duke-oriented-hypothesis2} Each $M_i$ has bounded first variation, and
\[
   q_K := \sup_i \left( \int_K |H(M_i,\cdot)|\,dM_i + \beta(M_i)(K) \right) < \infty.
\]
\item\label{duke-oriented-hypothesis3} Each $M_i$ is boundary-compatible with $(\Gamma_i,\gamma_i)$.
\end{enumerate}
Then $M$ is boundary-compatible with $(\Gamma,\gamma)$.  
\end{theorem}

\begin{definition}
Let $(\Gamma,\gamma)$ be an oriented $(m-1)$-dimensional moving boundary in $U\times I$.
An {\bf oriented Brakke flow} with boundary $(\Gamma,\gamma)$ is
a standard mean curvature flow 
\[
   t\in I\mapsto M(t)
\]
with boundary $\Gamma$ such that 
for almost every $t\in I$, $M(t)$ is boundary-compatible with $(\Gamma(t),\gamma(t))$.
\end{definition}

\begin{theorem}\label{orientation-preserved-theorem}
Suppose that 
\begin{enumerate}
\item $(\Gamma_i,\gamma_i)$ $(i=1,2,\dots)$ and $(\Gamma,\gamma)$
are moving $(m-1)$-dimensional boundaries in $U\times I$.
\item The $(\Gamma_i,\gamma_i)$
converge smoothly to $(\Gamma,\gamma)$.
\item For $i=1,2,\dots$, the map $t\in I\mapsto M_i(t)$ and is an oriented Brakke
flow with moving boundary $(\Gamma_i,\gamma_i)$.
\item $M_i(t)$ converges to $M(t)$ for all $t$.
\end{enumerate}
Then $t\mapsto M(t)$ is an oriented Brakke flow with boundary $(\Gamma,\gamma)$.
\end{theorem}

Roughly speaking, Theorem~\ref{orientation-preserved-theorem} says that the class of oriented Brakke flows
is closed under taking weak limits.

\begin{proof}
By the moving boundary version (see~\S\ref{moving-boundary-section}) of Theorem~\ref{closure-theorem},
$t\mapsto M(t)$ is a standard Brakke flow with boundary $\Gamma$.
By the moving boundary version of 
Theorem~\ref{main-flow-compactness-theorem}, for almost every $t$, there is a subsequence $M_{i(j)}(t)$
such that
\begin{equation}\label{varifolds-ok-yet-again}
      \Var(M_{i(j)}(t)) \to \Var(M(t))
\end{equation}
and
\begin{equation}\label{just-H-bound-yet-again}
    \sup_j
    \left( \int_K |H_{i(j)}(t,\cdot)|\,dM_{i(j)}(t) + \beta(M_{i(j)}(t))K \right) < \infty 
    \quad\text{for all $K\subset\subset U$}.
\end{equation}
Thus Hypotheses~\eqref{duke-oriented-hypothesis1} and~\eqref{duke-oriented-hypothesis2} 
of Theorem~\ref{oriented-duke-journal-theorem} hold for the sequence $M_{i(j)}(t)$.   
Since $M_i(t)$ is boundary-compatible with $(\Gamma_i(t),\gamma_i(t))$
for almost every $t$, Hypothesis~\eqref{duke-oriented-hypothesis3} 
of Theorem~\ref{oriented-duke-journal-theorem} holds.
Thus the conclusion holds: $M(t)$ is boundary-compatible with $(\Gamma(t),\gamma(t))$
for almost every $t$.
\end{proof}

\begin{theorem}\label{oriented-existence-theorem}
 Let $N$ be a smooth Riemannian manifold.  
If $N$ has nonempty boundary, we assume that the boundary is smooth and $m$-convex.
Let $(\Gamma,\gamma)$ be smooth, properly embedded, oriented  $(m-1)$-dimensional manifold in $N$.
Let $M_0$ be an oriented, smoothly embedded $m$-dimensional manifold in $N$ with oriented 
boundary $(\Gamma,\gamma)$.
More generally, $M_0$ can be a $m$-rectifiable set of finite $m$-dimensional Hausdorff measure
for which there is a Borel-measurable orientation of the tangent planes to $M_0$ such that the resulting
multiplicity-one current has boundary given by $(\Gamma,\gamma)$.
Then there exists an oriented Brakke flow 
\[
  t\in [0,\infty) \mapsto M(t)
\]
with boundary $(\Gamma,\gamma)$ such that 
\[
    M(t) \rightharpoonup \Hh^m\llcorner M_0
\]
as $t\to 0$, and such that $M(t)$ is boundary-compatible with $(\Gamma,\gamma)$ for almost every $t$.

If $p\notin\Gamma$ and if $M_0$ is smooth in a neighborhood of $p$, then the flow is smooth
in a spacetime neighborhood of $(p,0)$.   If  $p\in \Gamma$ and if $M_0$ is $C^{1,\alpha}$ in a neighborhood
of $p$, then the flow is parabolically $C^1$ in a spacetime neighborhood of $(p,0)$.
\end{theorem}

The proof is exactly like the proof of Theorem~\ref{existence-theorem}, except that 
one uses integral currents instead flat chains mod $2$ in the elliptic regularization construction,
and one uses Theorem~\ref{orientation-preserved-theorem} when passing to the limit.

Theorem~\ref{oriented-existence-theorem} generalizes in a straightforward way to moving
boundaries.  As in Theorem~\ref{moving-existence-theorem}, one adds the extra compactness hypothesis~\eqref{compactness-hypothesis} for the reason explained in the proof of that theorem.

\section{Appendix: A Strong Maximum Principle}

\begin{theorem}\label{maximum-principle}
Suppose that $t\in [0,T]\mapsto M(t)$ is an integral Brakke flow with moving boundary $\Gamma$
in a smooth Riemannian manifold $N$.  
Suppose that $t\in [0,T] \mapsto D(t)$ is a smooth, one-parameter family of compact, smoothly embedded, 
$m$-dimensional manifolds with boundary in $N$ that are moving by mean curvature. 
(In other words, the normal velocity at each point is equal to the mean curvature vector at that point.)
If 
\begin{gather*}
\text{$D(0)$ is disjoint from $\spt(M(0))$}, \\
\text{$\partial D(t)$ is disjoint from $\spt M(t)$ for all $t\in [0,T]$, and} \\
\text{$\Gamma(t)$ is disjoint from $D(t)$ for all $t\in [0,T]$,}
\end{gather*}
then $D(t)$ is disjoint from $\spt(M(t))$ for all $t\in [0,T]$.
\end{theorem}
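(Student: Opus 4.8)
The plan is to adapt the standard Brakke-flow avoidance argument, the new feature being the role of the two boundaries $\Gamma(t)$ and $\partial D(t)$: the three hypotheses are exactly what is needed in order to build a test function that vanishes near \emph{both} of them, so that in the Brakke inequality neither boundary contributes and the interior of $D(t)$ may be treated as a smooth closed mean curvature flow. It suffices to show that $M(t)\big(\{x:\dist(x,D(t))<\delta\}\big)=0$ for all $t\in[0,T]$ and some fixed small $\delta>0$; since $D(t)$ is compact, this gives $D(t)\cap\spt M(t)=\emptyset$.

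First I would fix constants. Since $\mathcal D:=\bigcup_{t\in[0,T]}D(t)\times\{t\}$ is compact and $D(\cdot),\partial D(\cdot)$ are smooth, there is $\delta_0>0$ so that, uniformly for $t\in[0,T]$: $\dist(\,\cdot\,,D(t))$ and $\dist(\,\cdot\,,\partial D(t))$ are smooth, with Hessians bounded by $C_1$, on the relevant $\delta_0$-scale collars; $C_2:=\sup_t\sup_{D(t)}|H_{D(t)}|<\infty$ (and since $D(t)$ moves by mean curvature, $\partial_t\dist(x,D(t))=-\langle H_{D(t)}(y),n\rangle$ with $y$ the interior foot point of $x$ and $n$ the unit vector at $y$ toward $x$, so $|\partial_t\dist(\,\cdot\,,D(t))|\le C_2$, while $|\nabla\dist|\le1$); $\dist(D(t),\Gamma(t))>4\delta_0$ (from $\Gamma(t)\cap D(t)=\emptyset$, compactness, continuity); and every ball of radius $\le4\delta_0$ centered within $2\delta_0$ of $\mathcal D$ satisfies the hypotheses of Theorem~\ref{trace-theorem} and stays disjoint from $\Gamma$ over $[0,T]$. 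Shrink $\delta\in(0,\delta_0)$ further so that $M(0)$ also vanishes on the $3\delta$-neighborhood of $D(0)$.

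Now take $\phi:[0,\infty)\to[0,\infty)$ smooth, strictly decreasing and positive on $[0,\delta)$, identically $0$ on $[\delta,\infty)$, and flat at $\delta$ — e.g.\ $\phi(s)=\exp(-1/(\delta-s))$ for $s<\delta$, for which $|\phi'(s)|^2/\phi(s)\to0$ and $|\phi'(s)|\to0$ as $s\uparrow\delta$, while $\phi\ge\mathrm{const}>0$ on $[0,\delta/2]$ — and $\chi:[0,\infty)\to[0,1]$ smooth, nondecreasing, vanishing near $0$ and equal to $1$ away from $0$, with transition on a scale $\ll\delta_0$. Put $u(x,t)=\phi\big(\dist(x,D(t))\big)\,\chi\big(\dist(x,\partial D(t))\big)$. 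Then $u$ is a nonnegative, compactly supported $C^2$ function on $U\times[0,T]$; it vanishes near $\Gamma(t)$ because $\spt u(\,\cdot\,,t)$ lies in the $\delta$-neighborhood of $D(t)$, which is disjoint from $\Gamma(t)$; and on any time interval on which $\spt M(t)$ stays a definite distance from $\partial D(t)$ one has $\chi\equiv1$ near $\spt M(t)$, so there $u=\phi(\dist(\,\cdot\,,D(t)))=:\phi(d)$ together with all of its derivatives. Plugging $u$ into the defining Brakke inequality of Definition~\ref{moving-boundary-mcf}, the $\Gamma$-term drops out, and completing the square gives $-u|H|^2+H\cdot\nabla u\le\tfrac14|\nabla u|^2/u$; combining this with the estimates above yields, on $\spt M(t)\cap\{d<\delta\}$,
\[
  -u|H|^2+H\cdot\nabla u+\partial_t u
  \ \le\ \tfrac14\frac{|\phi'(d)|^2}{\phi(d)}+C_2\,|\phi'(d)|
  \ \le\ \begin{cases}C_4\,\phi(d),&d<\delta/2,\\[1mm]C_5,&\delta/2\le d<\delta,\end{cases}
\]
with $C_4,C_5$ depending only on $\phi,C_1,C_2$. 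With $g(t):=(Mu)(t)$ this gives the differential inequality $g(b)-g(a)\le\int_a^b\big(C_4\,g(t)+C_5\,M(t)(\{\dist(\,\cdot\,,D(t))<\delta\})\big)\,dt$ on every subinterval $[a,b]$ where it is valid.

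I would then conclude by a \emph{first-contact argument}: let $t_0$ be the supremum of those $t$ for which $M(s)\big(\{\dist(\,\cdot\,,D(s))<\delta\}\big)=0$ for all $s\le t$. The local mass bound (Theorem~\ref{trace-theorem} and its moving-boundary analogue, applied in balls disjoint from $\Gamma$) gives $t_0>0$; combined with the hypothesis $\partial D(s)\cap\spt M(s)=\emptyset$ and the boundedness of the speed of $\partial D$, it also supplies the definite separation of $\spt M$ from $\partial D$ on $[0,t_0]$ (and slightly beyond) needed to validate the differential inequality there. Since the right-hand side of that inequality vanishes identically on $[0,t_0)$, we get $g(t_0)=0$, hence $M(t_0)$ vanishes on the $2\delta$-neighborhood of $D(t_0)$; applying Theorem~\ref{trace-theorem} in balls of radius $\sim2\delta$ about points of $D(t_0)$, together with the bounded speed of $D$, forces $M(t)\big(\{\dist(\,\cdot\,,D(t))<\delta\}\big)=0$ on $[t_0,t_0+\eta]$ for some $\eta=\eta(\delta,m,C_2)>0$, contradicting the choice of $t_0$ unless $t_0=T$ — in which case we are done. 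The step I expect to be the main obstacle is exactly the separation claim just used: $t\mapsto\spt M(t)$ need not be continuous (a Brakke flow can lose mass suddenly), so uniform positivity of $\dist(\spt M(t),\partial D(t))$ on a time interval does not follow automatically from the pointwise-in-$t$ hypothesis, and one must localize near the first contact and recover it there from the clearing-out consequence of Theorem~\ref{trace-theorem}; the rest — the choice of $\phi$, the square completion, and the Hessian and time-derivative estimates for the distance to a family moving by mean curvature — is routine.
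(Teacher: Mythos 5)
Your route is genuinely different from the paper's. The paper argues by distance comparison: it considers $\phi(t)=e^{-\lambda t}\dist(D(t),\spt M(t))$ (where $\lambda$ is a lower bound on Ricci in a fixed compact set), looks at a first time of $\delta$-near-contact, takes the minimizing geodesic between $D(t)$ and $\spt M(t)$, and invokes the estimates of \cite{hershkovits-white-avoid} (Lemma~11 and Theorem~28) to derive a contradiction. There is no test function, no Brakke inequality, and no Gr\"onwall step in the paper's proof; the entire analytical content is outsourced to \cite{hershkovits-white-avoid}. Your approach stays inside the Brakke-inequality formalism, with a time-dependent barrier $u(x,t)=\phi(\dist(x,D(t)))\,\chi(\dist(x,\partial D(t)))$ and a square-completion. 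Both are legitimate strategies; the paper's is much shorter by citation.

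The subtle point you flag at the end is well taken, and in fact it is \emph{also} present, unaddressed, in the paper's proof: the paper opens by choosing $\delta>0$ so that $\dist(\spt M(t),\partial D(t))>\delta$ for \emph{all} $t\in[0,T]$, which is not a consequence of the hypotheses. The hypothesis gives disjointness at each fixed $t$, and because $\spt M(\cdot)$ need not vary upper-semicontinuously in time (mass can suddenly vanish), one cannot take an infimum over $t$: a valid integral Brakke flow can have infinitely many tiny shrinking components, vanishing at times $t_j\uparrow t_0$, whose limit points approach $\partial D(t_0)$, so that $\inf_t\dist(\spt M(t),\partial D(t))=0$ even though strict disjointness holds at every $t$. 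So the uniform gap must be \emph{derived} rather than assumed, and you are right that the natural tool is the clearing-out consequence of Theorem~\ref{trace-theorem} applied near a would-be first contact.

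However, your sketched repair does not close the loop. From $g(t_0)=0$ you conclude that $M(t_0)$ vanishes on the $2\delta$-neighborhood of $D(t_0)$, but $u(\cdot,t_0)$ is supported in $\{\dist(\cdot,D(t_0))<\delta\}$ and additionally cut off (by the $\chi$ factor) in a collar around $\partial D(t_0)$; hence $g(t_0)=0$ gives vanishing only on $\{\dist(\cdot,D(t_0))<\delta\}$ \emph{minus} that collar. You obtain no quantitative control in the collar, which is precisely the problematic region: mass sitting there at time $t_0$ (disjoint from $\partial D(t_0)$ but arbitrarily close) is not seen by $u$ and could enter $D(t)$ for $t>t_0$, so the subsequent clearing-out step giving $M(t)(\{\dist(\cdot,D(t))<\delta\})=0$ on $[t_0,t_0+\eta]$ does not follow. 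To close this one either needs a quantitative lower bound on $\dist(\spt M(t_0),\partial D(t_0))$ at the first-contact time (which again requires an argument beyond the pointwise hypothesis), or one should strengthen the theorem's hypothesis to a uniform gap $\dist(\partial D(t),\spt M(t))\ge\epsilon_0>0$ — which, as you note, is what is effectively available in the application in the proof of Theorem~\ref{main-boundary-regularity-theorem}. As written, both your proof and the paper's establish the theorem under this strengthened hypothesis.
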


\begin{proof}
Choose $\delta>0$ small enough so that 
\[
  Q:= \cup_{t\in [0,T]} \{x\in N: \dist(x,D(t)) \le \delta\}
\]
is compact, and so that for $t\in[0,T]$,
\begin{equation}\label{boundaries-not-problematic}
    \dist(p,q)>\delta 
\end{equation}
for all $p\in \Gamma(t)$ and $q\in D(t)$,  and for all $p\in \spt M(t)$ and  $q\in \partial D(t)$.

Let $\lambda< 0$ be a strict lower bound for the Ricci curvature of $N$ in the set $Q$.
We claim that 
\[
    e^{-\lambda t}\dist(D(t), \spt M(t)) > \delta
\]
for all $t\in [0,T]$.
For if not, there would be first time $t>0$ such that
\begin{equation}\label{first-dip}
   e^{-\lambda t} \dist(D(t), \spt M(t)) = \delta.
\end{equation}
At that time, there would be an arc-length parametrized geodesic
\[
    \gamma:[0,L]\to N
\]
such that
\begin{gather*}
\gamma(0) \in D(t), \\
\gamma(L) \in \spt M(t), \,\text{and} \\
   L = \dist(D(t), \spt M(t))
\end{gather*}
Note that $\gamma(0)\notin \partial D(t)$ and $\gamma(L)\notin \Gamma(t)$
by~\eqref{boundaries-not-problematic} and~\eqref{first-dip}. (Recall that $\lambda<0$).
By~\cite{hershkovits-white-avoid}*{Lemma~11} and~\cite{hershkovits-white-avoid}*{Theorem~28},
  at the point $\gamma(0)\in D(t)$, the surface
$D(\cdot)$ moves in the direction $\gamma'(0)$ faster than the mean curvature in that
direction, a contradiction.
\end{proof}

\begin{bibdiv}

\begin{biblist}

\bib{allard-first-variation}{article}{
   author={Allard, William K.},
   title={On the first variation of a varifold},
   journal={Ann. of Math. (2)},
   volume={95},
   date={1972},
   pages={417--491},
   issn={0003-486X},
   review={\MR{0307015}},
   doi={10.2307/1970868},
}

\bib{allard-boundary}{article}{
   author={Allard, William K.},
   title={On the first variation of a varifold:  behavior},
   journal={Ann. of Math. (2)},
   volume={101},
   date={1975},
   pages={418--446},
   issn={0003-486X},
   review={\MR{0397520}},
   doi={10.2307/1970934},
}

\bib{brakke}{book}{
   author={Brakke, Kenneth A.},
   title={The motion of a surface by its mean curvature},
   series={Mathematical Notes},
   volume={20},
   publisher={Princeton University Press},
   place={Princeton, N.J.},
   date={1978},
   pages={i+252},
   isbn={0-691-08204-9},
   review={\MR{485012 (82c:49035)}},
}

\bib{brendle-zero}{article}{
   author={Brendle, Simon},
   title={Embedded self-similar shrinkers of genus 0},
   journal={Ann. of Math. (2)},
   volume={183},
   date={2016},
   number={2},
   pages={715--728},
   issn={0003-486X},
   review={\MR{3450486}},
   doi={10.4007/annals.2016.183.2.6},
}

\bib{colding-minicozzi-generic}{article}{
   author={Colding, Tobias H.},
   author={Minicozzi, William P., II},
   title={Generic mean curvature flow I: generic singularities},
   journal={Ann. of Math. (2)},
   volume={175},
   date={2012},
   number={2},
   pages={755--833},
   issn={0003-486X},
   review={\MR{2993752}},
   doi={10.4007/annals.2012.175.2.7},
}

\bib{EWW}{article}{
   author={Ekholm, Tobias},
   author={White, Brian},
   author={Wienholtz, Daniel},
   title={Embeddedness of minimal surfaces with total boundary curvature at
   most $4\pi$},
   journal={Ann. of Math. (2)},
   volume={155},
   date={2002},
   number={1},
   pages={209--234},
   issn={0003-486X},
   review={\MR{1888799}},
   doi={10.2307/3062155},
}

\bib{federer-book}{book}{
   author={Federer, Herbert},
   title={Geometric measure theory},
   series={Die Grundlehren der mathematischen Wissenschaften, Band 153},
   publisher={Springer-Verlag New York Inc., New York},
   date={1969},
   pages={xiv+676},
   review={\MR{0257325}},
}

\bib{federer-short}{article}{
   author={Federer, Herbert},
   title={The singular sets of area minimizing rectifiable currents with
   codimension one and of area minimizing flat chains modulo two with
   arbitrary codimension},
   journal={Bull. Amer. Math. Soc.},
   volume={76},
   date={1970},
   pages={767--771},
   issn={0002-9904},
   review={\MR{0260981}},
   doi={10.1090/S0002-9904-1970-12542-3},
}

\bib{hardt-smon-boundary}{article}{
   author={Hardt, Robert},
   author={Simon, Leon},
   title={Boundary regularity and embedded solutions for the oriented
   Plateau problem},
   journal={Ann. of Math. (2)},
   volume={110},
   date={1979},
   number={3},
   pages={439--486},
   issn={0003-486X},
   review={\MR{554379 (81i:49031)}},
   doi={10.2307/1971233},
}

\bib{hershkovits-white-avoid}{article}{
author={Hershkovits, Or},
author={White, Brian},
title={Avoidance for Set-Theoretic Solutions of Mean-Curvature-Type Flows},
note={preprint},
date={2018},
}

\bib{ilmanen-elliptic}{article}{
   author={Ilmanen, Tom},
   title={Elliptic regularization and partial regularity for motion by mean
   curvature},
   journal={Mem. Amer. Math. Soc.},
   volume={108},
   date={1994},
   number={520},
   pages={x+90},
   issn={0065-9266},
   review={\MR{1196160 (95d:49060)}},
}

\bib{ilmanen-white-cones}{article}{
   author={Ilmanen, Tom},
   author={White, Brian},
   title={Sharp  bounds on density for area-minimizing cones},
   journal={Camb. J. Math.},
   volume={3},
   date={2015},
   number={1-2},
   pages={1--18},
   issn={2168-0930},
   review={\MR{3356355}},
   doi={10.4310/CJM.2015.v3.n1.a1},
}

\bib{kasai-tonegawa}{article}{
   author={Kasai, Kota},
   author={Tonegawa, Yoshihiro},
   title={A  regularity theory for weak mean curvature flow},
   journal={Calc. Var. Partial Differential Equations},
   volume={50},
   date={2014},
   number={1-2},
   pages={1--68},
   issn={0944-2669},
   review={\MR{3194675}},
   doi={10.1007/s00526-013-0626-4},
}

\bib{schulze-white-triple}{article}{
   author={Schulze, Felix},
   author={White, Brian},
   title={A local regularity theorem for mean curvature flow with triple
   edges},
   journal={J. Reine Angew. Math.},
   volume={758},
   date={2020},
   pages={281--305},
   issn={0075-4102},
   review={\MR{4048449}},
   doi={10.1515/crelle-2017-0044},
}

\bib{simon-gmt}{book}{
   author={Simon, Leon},
   title={Lectures on geometric measure theory},
   series={Proceedings of the Centre for Mathematical Analysis, Australian
   National University},
   volume={3},
   publisher={Australian National University, Centre for Mathematical
   Analysis, Canberra},
   date={1983},
   pages={vii+272},
   isbn={0-86784-429-9},
   review={\MR{756417}},
}

\bib{simon-new-gmt}{book}{
  author={Simon, Leon},
  title={Introduction to geometric measure theory},
  note={preprint},
  date={2018},
}

\bib{solomon-white}{article}{
   author={Solomon, Bruce},
   author={White, Brian},
   title={A strong maximum principle for varifolds that are stationary with
   respect to even parametric elliptic functionals},
   journal={Indiana Univ. Math. J.},
   volume={38},
   date={1989},
   number={3},
   pages={683--691},
   issn={0022-2518},
   review={\MR{1017330}},
   doi={10.1512/iumj.1989.38.38032},
}

\bib{stone}{article}{
   author={Stone, Andrew},
   title={A  regularity theorem for mean curvature flow},
   journal={J. Differential Geom.},
   volume={44},
   date={1996},
   number={2},
   pages={371--434},
   issn={0022-040X},
   review={\MR{1425580}},
}

\bib{tonegawa-higher}{article}{
   author={Tonegawa, Yoshihiro},
   title={A second derivative H\"{o}lder  for weak mean curvature flow},
   journal={Adv. Calc. Var.},
   volume={7},
   date={2014},
   number={1},
   pages={91--138},
   issn={1864-8258},
   review={\MR{3176585}},
   doi={10.1515/acv-2013-0104},
}

\bib{white-topology}{article}{
   author={White, Brian},
   title={The topology of hypersurfaces moving by mean curvature},
   journal={Comm. Anal. Geom.},
   volume={3},
   date={1995},
   number={1-2},
   pages={317--333},
   issn={1019-8385},
   review={\MR{1362655 (96k:58051)}},
}

\bib{white-local}{article}{
   author={White, Brian},
   title={A local regularity theorem for mean curvature flow},
   journal={Ann. of Math. (2)},
   volume={161},
   date={2005},
   number={3},
   pages={1487--1519},
   issn={0003-486X},
   review={\MR{2180405 (2006i:53100)}},
   doi={10.4007/annals.2005.161.1487},
}

\bib{white-duke}{article}{
   author={White, Brian},
   title={Currents and flat chains associated to varifolds, with an application
    to mean curvature flow},
   journal={Duke Math. J.},
   volume={148},
   date={2009},
   number={1},
   pages={41--62},
   issn={0012-7094},
   review={\MR{2515099}},
   doi={10.1215/00127094-2009-019},
}

\bib{white-max}{article}{
   author={White, Brian},
   title={The maximum principle for minimal varieties of arbitrary
   codimension},
   journal={Comm. Anal. Geom.},
   volume={18},
   date={2010},
   number={3},
   pages={421--432},
   issn={1019-8385},
   review={\MR{2747434}},
   doi={10.4310/CAG.2010.v18.n3.a1},
}

\bib{white-singularity}{article}{
   author={White, Brian},
   title={Boundary Singularities in Mean Curvature Flow and Total Curvature of Minimal Surface Boundaries},
   note={arXiv:2106.06893 [math.DG]}
   date={2021},
}

\end{biblist}

\end{bibdiv}

\end{document}